\newcommand{\email}[1]{\texttt{#1}}
\newenvironment{keywords}{\begin{center}\textbf{Keywords: }}{\end{center}}
\theoremstyle{definition}
\newtheorem{assumption}{Assumption}
\newtheorem{definition}{Definition}
\newtheorem{remark}{Remark}
\newtheorem{lemma}{Lemma}
\newtheorem{theorem}{Theorem}
\newtheorem{corollary}{Corollary}
\newtheorem{proposition}{Proposition}
\pgfplotsset{compat=1.15}
\DeclareMathOperator{\argmin}{argmin}
\renewcommand{\equiv}{:=}
\newcommand{\map}[3]{#1:\,#2\rightarrow #3\,}
\newcommand{\paren}[1]{\left(#1\right)}
\newcommand{\tbar}{{\overline{t}}}
\newcommand{\Nbb}{\mathbb{N}}
\newcommand{\R}{\mathbb{R}} 
\newcommand{\B}{\mathbb{B}}
\newcommand{\Ebb}{\mathbb{E}}
\newcommand{\dist}{\mathrm{dist}}
\newcommand{\Id}{\mathrm{Id}}
\newcommand{\gph}{\mathrm{gph} }
\newcommand{\Fix}{\mathrm{Fix}\ }
\newcommand{\conv}{\mathrm{conv}}
\newcommand{\pncone}[1]{N^{\mbox{\rm prox}}_{#1}} 
\newcommand{\Fixepsilon}{{\mathrm{Fix}_{\varepsilon} }}
\title{Cyclic Relaxed Douglas-Rachford Splitting for Inconsistent Nonconvex Feasibility}
\author{
 \begin{minipage}{\textwidth}
   \centering
   Thi Lan Dinh\thanks{Institut f\"ur Numerische und Angewandte Mathematik, Universit\"at G\"{o}ttingen, Lotzestr. 16--18, G\"ottingen 37083, Germany (\email{t.dinh@math.uni-goettingen.de}).} \quad
   G. S. Matthijs Jansen\thanks{I. Physical Institute, Universit\"at G\"ottingen, Friedrich-Hund-Platz 1,  37077 G\"{o}ttingen, Germany (\email{gsmjansen@uni-goettingen.de}).} \quad
   D. Russell Luke\thanks{Institut f\"ur Numerische und Angewandte Mathematik, Universit\"at G\"ottingen, Lotzestr. 16--18, G\"{o}ttingen, Germany (\email{r.luke@math.uni-goettingen.de}).}
 \end{minipage}
}
\date{\today}
\begin{document}

\maketitle

\begin{abstract}
We study the cyclic relaxed Douglas-Rachford algorithm for possibly nonconvex, and inconsistent feasibility problems.
This algorithm can be viewed as a convex relaxation between the cyclic Douglas-Rachford algorithm first introduced
by Borwein and Tam \cite{BorTam14} and the classical cyclic projections algorithm.  
We characterize the
fixed points of the cyclic relaxed Douglas-Rachford algorithm and show the relation of the {\em shadows} of
these fixed points to the fixed points of the cyclic projections algorithm.  Finally, we provide conditions that
guarantee local quantitative convergence estimates in the nonconvex, inconsistent setting.
\end{abstract}
\noindent\begin{keywords}
{inconsistent feasibility, projection methods, Douglas-Rachford splitting, nonconvex optimization, relaxed averaged alternating reflections, linear convergence, prox-regularity, super-regularity}
\end{keywords}


\noindent  \textbf{AMS subject classifications.} 65K10, 49K40, 49M05, 65K05, 90C26, 49M20, 49J53

%
%
%
\section{Introduction}
Given a finite collection of closed subsets of a real Euclidean space, $\{A_1,\dots,A_m\}$, we seek
a point that is close, in some sense, to belonging to all sets simultaneously.  When the sets
have common points this is the consistent feasibility problem
\begin{equation}\label{eq:multiset.model}
    \mbox{Find } z\in \bigcap_{i=1}^m A_i.
\end{equation}
When the collection of sets does not possess common points, problem \eqref{eq:multiset.model} is
said to be {\em inconsistent}.
The leading algorithms for solving \eqref{eq:multiset.model} in both the consistent and
inconsistent cases are based on projectors onto the sets, and among these the Douglas-Rachford
algorithm is fundamental.  For inconsistent feasibility problems, the Douglas-Rachford algorithm
does not possess fixed points;  in the convex setting it even diverges \cite{LionsMercier79}.
However, where other formulations and methods get stuck in undesirable local minima, the apparent instability
of the Douglas-Rachford algorithm is a tremendous asset: it will not converge to bad local minima
for inconsistent feasibility.  Stabilizations of the Douglas-Rachford mapping have been studied
by many authors.  The form we focus on was first examined in \cite{Luke2005}.

Let $P_A$ denote the metric projector onto the closed set $A$. The {\em reflector} is defined by
$R_A:= 2P _A - \Id$, where $\Id$ is the identity mapping.
For the case of two sets, the relaxed Douglas-Rachford operator $T_{1,2}$ is defined by
\begin{equation}\label{eq:DRl12}
    T_{1,2}=\frac{\lambda}{2}(R_{A_1 }R_{A_2}+\Id)+(1-\lambda)P_{A_2},
\end{equation}
where the relaxation parameter $\lambda$ is a constant in $(0,1]$.
With $\lambda = 1$, the operator $T_{1,2}$ reduces to the original Douglas-Rachford operator.
In \cite{bauschke2004finding}, the asymptotic behavior of this
iteration is studied for the case $\lambda=1$  for inconsistent collections of closed, convex sets.
In the case where the intersection is nonempty, the authors also show weak convergence to a point whose
shadow, defined as the projection of this point onto $A_2$,
belongs to the intersection of the sets.
In the inconsistent nonconvex setting, the fixed points were characterized in \cite{Luke08}.
Under the assumption of {\em prox-regularity} of one, or both of the sets, the Douglas-Rachford algorithm
was shown to be locally linearly convergent when the feasibility problem is consistent \cite{hesse2013nonconvex,Phan16},
and similar results were obtained for a more general type of relaxation to \eqref{eq:DRl12} for consistent
feasibility in \cite{DaoPhan18}.  In \cite{luke2020convergence} algorithm \eqref{eq:DRl12} was shown to
be locally linearly convergent for inconsistent collections of sets that are a generalization of prox-regular sets
whenever the sets, as a collection, satisfy a type of {\em subtransversality} condition \cite[Theorem 4.11]{luke2020convergence}.

For the case of $m>2$, there are many different ways to generalize \eqref{eq:DRl12}.  We consider sequences
of a relaxation of the cyclic Douglas-Rachford algorithm
\begin{subequations}
\begin{equation}\label{eq:CDRl.alg.intro}
y^{(k+1)}\in T y^{(k)}\,,\ k=1,2,\dots
\end{equation}
for
\begin{equation}\label{eq:TCDRl}
    T:= T_{1,2}\circ T_{2,3} \circ \dots \circ T_{m-2,m-1} \circ T_{m-1,m} \circ T_{m,1},
\end{equation}
where 
\begin{equation}\label{eq:DRl.Ai}
    T_{i,j}=\frac{\lambda}{2}(R_{A_i}R_{A_{j}}+\Id)+(1-\lambda)P_{A_{j}}\,.
\end{equation}
\end{subequations}
We call $T$ the cyclic relaxed Douglas-Rachford operator.

If $\lambda=1$, $T$ is the Cyclic Douglas-Rachford operator first studied in a Hilbert space setting
in \cite{BorTam14}.
There the authors established weak convergence of the algorithm for the consistent problem
\eqref{eq:multiset.model} for closed convex sets;  they also characterized the fixed points in the case where the sets
have points in common.   The cyclic relaxed Douglas-Rachford operator was first studied in \cite{luke2018relaxed}
in the consistent convex setting.
If $\lambda=0$, T becomes the cyclic projection operator
$P_{A_2}\circ P_{A_3} \circ \dots \circ P_{A_{m-1}} \circ P_{A_m} \circ P_{A_1}$.
In this case, Luke, Thao and Tam prove in \cite[Theorem 3.2]{russell2018quantitative} linear convergence of the
sequence $(y^{(k)})_{k=0}^\infty$ to a fixed point of $T$ when problem \eqref{eq:multiset.model} is possibly
inconsistent under the the assumption of {\em subregularity} and {\em subtransversality} of the sets $A_j$.
In the present study, we address the case $\lambda\in (0,1]$ for nonconvex, inconsistent feasibility.

In section \ref{s:notation} we introduce the basic tools and results for our approach.  Section \ref{s:characterization}
develops the analysis of fixed points of the cyclic relaxedDouglas-Rachford mapping from various perspectives.
Theorem \ref{theo:characterization.fix.T} shows that the fixed points are convex combinations of points in the sets.
Section \ref{s:shadows} investigates the shadow sequences of the cyclic relaxed Douglas-Rachford mapping with the main result,
Theorem \ref{theo:shadow.fixed.point.subset.CP.noise}, establishing that just a single projection of the fixed points of the
cyclic relaxed Douglas-Rachford mapping gets one close to a fixed point of the cyclic projections mapping.  In section \ref{s:quant}
we develop local quantitative convergence results under established assumptions (Theorem \ref{theo:convergence.CDRl}).

\section{Notation and definitions}\label{s:notation}
Our setting is the finite-dimensional Euclidean space $\mathbb E$.  The central objects and notation used below are all found in 
\cite{rt1998wets}.  
Given set-value mapping $T:\Ebb \rightrightarrows \Ebb$, define the set of fixed points of $T$ by
$    \Fix T:=\{x\in \Ebb\,|\, x\in Tx\}.
$
The inverse operator $T^{-1}$ is defined by
$    T^{-1} (a) := \{x \in \Ebb \ | \ a \in T x \}
$
and the graph of the mapping $T$ is the set
$
    \gph (T):=\{(x,y):x\in \Ebb\,,\,y\in Tx\}
$
which is a subset of $\Ebb\times \Ebb$. 
For a subset $A\subset\Ebb$, the image of $A$ under $T$, denoted by $TA$, is defined by
\begin{equation*}
    T A:=\bigcup_{a\in A} Ta\,.
\end{equation*}
When the mapping $T:\Ebb \rightrightarrows \Ebb$ is single-valued at $y\in \Ebb$, that is if $Ty = \{z\}$ is a singleton,then
we simply write $z=Ty$.
The mapping $T$ is single-valued on $U\subset E$ if it is single-valued at all points $y\in U$.
With $T,Q:\Ebb \rightrightarrows \Ebb$ being set-value mapping, the composite $TQ$ or $T\circ Q$ from $\Ebb$ to $\Ebb$ is defined by
\begin{equation*}
    TQx= (T\circ Q)x:= \bigcup_{y\in Qx} Ty.
\end{equation*}
The distance of a point $x\in \Ebb$ to a set $A\subset \Ebb$ is denoted
$\dist(x, A)$ i.e.,
\begin{equation*}
    \dist(x, A)=\inf_{a\in A}\|x-a\|\,
\end{equation*}
and the mapping to points at which the distance is attained is the {\em projector}:
\begin{equation*}
    P_A (x) := \argmin_{a\in A} \|a - x\|.
\end{equation*}
When $A$ is {\em closed and non-empty} this set is nonempty. 
If $y \in  P_A (x)$, then $y$ is called a projection of $x$ on $A$. 
The reflector is defined by $R_ C:= 2P _C - \Id$. If $y\in R_C(x)$, then $y$ is called reflection of $x$ across $C$. We denote by $\mathbb{B}$ the open unit ball and by $\B_\delta(x)$ the open
ball with radius $\delta$ around the point $x$.

The central tool for characterizing set-regularity is the {\em normal cone}.   
The {\em proximal normal cone} of a set $\Omega$ at $\bar{a}\in \Omega$ is defined by
		 	$$ N^P_\Omega(\bar{a}) := \mbox{cone}\left(P^{-1}_\Omega\bar{a}-\bar{a}\right).$$
        	Equivalently, $\bar{a}^*\in N^P_\Omega(\bar{a})$ whenever there exists $\sigma\geq 0$ such that
			$$\langle \bar{a}^*,x-\bar{a}\rangle\leq \sigma\|x-\bar{a}\|^2 \quad(\forall x\in \Omega).$$
The limiting (proximal) normal cone of $C$ at $\bar{a}$ is defined by
			$$ N_{\Omega}(\bar{a}) := \limsup\limits_{x\to\bar{a}}N_{\Omega}^{P}(x), $$
			where the limit superior is taken in the sense of Painlev\'e--Kuratowski outer limit.
 	 When $\bar{a}\not\in \Omega$, all normal cones at $\bar{a}$ are empty (by definition).

 	 The regularity of sets is defined in terms of their (truncated) normal cones.
\begin{definition}[super-regularity at a distance]\label{definition: super-reg+}
	A set $\Omega\subset \Ebb$ is called {\em $\epsilon$-super-regular at a distance relative
	to $\Lambda\subset \Ebb$ at $\bar x$} with constant $\epsilon$ whenever	\begin{align}\label{eq:ele.subreg}
    &\exists ~U_\epsilon\subset\mathbb{E}~\mbox{ open}~ :~ \bar x\in U_\epsilon,\mbox{ and }\\
    & \langle{v - (y'-y),y - x\rangle}\leq
    \epsilon_U\|v - (y'-y)\|\|y - x\|\\
    &\forall y'\in U_\epsilon\cap \Lambda, \quad \forall y\in P_\Omega(y'),\\
    & \forall (x,v)\in\left\{(x,v)\in \gph \pncone {\Omega}~\mid~x+v\in U_\epsilon,~x\in P_\Omega(x+v)\right\}.
    \end{align}
	
	The set $\Omega$ is called {\em super-regular at a distance relative
	to $\Lambda$ at $\bar x$} if it is $\epsilon$-super-regular at a distance relative
	to $\Lambda$ at $\bar x$ for all $\epsilon>0$.
\end{definition}
Super-regular sets were first studied in \cite{lewis2009local}.
The important feature of the definition is that, unlike in \cite{lewis2009local},
the reference point $\bar x$ need not be an element
of the set $\Omega$.  This was first proposed in \cite{luke2020convergence}
and allows one to characterize the regularity of sets from the perspective
of any point, not just those in the set.

Of particular interest for the application of phase retrieval are {\em prox-regular sets} \cite[Definition 1.1]{poliquin2000local}, defined here as those sets with single-valued projector on small enough neighborhoods of the set \cite[Theorem 1.3(a) and (k)]{poliquin2000local}. Clearly any convex set is prox-regular with single-valued projector globally.
More generally, \cite[Proposition 3.1]{russell2018quantitative} yields the following implications:
\begin{equation*}
\begin{array}{ccc}
\mbox{convexity}&\Rightarrow      \mbox{prox-regularity} &\Rightarrow \epsilon\mbox{-super-regularity at a distance}.
\end{array}
\end{equation*}

We show below in Lemma \ref{lem:nonexp.refector.projector} that $\epsilon$-super-regular sets
 	 have projectors and reflectors that are {\em almost nonexpansive} \eqref{e:epsqnonexp} and
 	 {\em almost $\alpha$-firmly nonexpansive} \eqref{e:paafne}
 	 (see \cite[Definition 1]{berdellima2022alpha}).
Let $D$ be a nonempty subset of $\mathbb{E}$ and let $T$ be a (set-valued) mapping from $D$ to $\mathbb{E}$.
$T$ is said to be  {\em pointwise almost nonexpansive on $D$ at $ y \in D$} if 
\begin{align}\label{e:epsqnonexp}
\exists \epsilon\in [0,1):&\quad \|x^+- y^+\|\leq\sqrt{1+\epsilon}\|x- y\|\,,\\
&\quad \forall~ y^+\in T y \mbox{ and } \forall~ x^+\in Tx \mbox{ whenever }x\in D.
\end{align}
If \eqref{e:epsqnonexp} holds with $\epsilon= 0$ (i.e. 1-Lipschitz) at every point $y\in D$, then $T$ is  {\em nonexpansive} on D, in line with the classical definition.
$T$ is {\em pointwise almost $\alpha$-firmly nonexpensive}
(abbreviated pointwise a$\alpha$-fne) at $y\in D$ whenever
\begin{align}\label{e:paafne}
&\exists\,\alpha\in (0,1)\mbox{ and }\epsilon\in [0,1)~:\\
&\qquad \|x^+-y^+\|^2\leq (1+\epsilon)\|x-y\|^2-\frac{1-\alpha}{\alpha}\|(x^+-x)-(y^+-y)\|^2\\
    &\qquad \forall x\in D\,,\,\forall x^+\in Tx\,,\,\forall y^+\in Ty\,, \\
\end{align}
If the constant $\alpha=1/2$, then $T$ is said to be pointwise almost
firmly nonexpansive  on $D$ with violation 
$\varepsilon$ at $y$.

\begin{remark}\label{r:single-valued}
What we are calling a$\alpha$-fne mappings have been called {\em almost averaged} in earlier works (\cite{luke2020convergence,russell2018quantitative}).  The current terminology is in line with metric space theory \cite{berdellima2022alpha}, and is more convenient for the presentation.
If $T$ is pointwise a$\alpha$-fne with $\alpha=1/2$ and  violation $\epsilon=0$ for every $y\in D$, then $T$ is just firmly nonexpansive on $D$ in the classical sense.

\end{remark}

The results below have been established in \cite[Proposition 2.1]{russell2018quantitative}
\begin{lemma}
[characterizations of $\alpha$-firmly nonexpensive operators]
\label{lem:average char}
   Let $T: \Ebb \rightrightarrows \Ebb$, $U\subset\Ebb$ and let $\alpha\in (0,1)$.  The following are equivalent. 
\begin{enumerate}[(i)]
   \item\label{t:average char i} $T$ is pointwise a$\alpha$-fne at $ y$ on $U$ with violation $\epsilon$.
   \item\label{t:average char ii} $(1-\frac{1}{\alpha})\Id + \frac{1}{\alpha}T$ is pointwise almost nonexpansive at $ y$ 
	on $U\subset\Ebb$ with violation $\epsilon/\alpha$. 
\end{enumerate}
Consequently, if $T$ is pointwise a$\alpha$-fne at $y$ on $U$ with violation $\epsilon$
then $T$ is pointwise almost nonexpansive at $y$ on $U$ with
violation at most $\epsilon$.
\end{lemma}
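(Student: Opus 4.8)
The plan is to reduce the whole statement to the elementary identity for affine combinations: for every real scalar $t$ and all $a,b\in\Ebb$,
\[
\|t\,a+(1-t)\,b\|^2 = t\|a\|^2+(1-t)\|b\|^2-t(1-t)\|a-b\|^2 ,
\]
which is pure algebra and in particular valid for $t=1/\alpha>1$. Write $S:=(1-\tfrac1\alpha)\Id+\tfrac1\alpha T$, so that for fixed $x$ one has $\xhat\in Sx$ \emph{if and only if} $\xhat=(1-\tfrac1\alpha)x+\tfrac1\alpha x^+$ for some $x^+\in Tx$. This is a one-to-one correspondence between selections from $Tx$ and selections from $Sx$, and it is what makes the quantifiers ``$\forall x^+\in Tx$'' in \eqref{e:paafne} and ``$\forall x^+\in Tx$'' (resp. for $S$) in \eqref{e:epsqnonexp} line up; the same correspondence is used at the base point $y$.

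The key step is to apply the identity with $t=1/\alpha$, $a=x^+-y^+$, $b=x-y$, so that $a-b=(x^+-x)-(y^+-y)$ and $\xhat-\yhat=(1-\tfrac1\alpha)(x-y)+\tfrac1\alpha(x^+-y^+)$. Solving the resulting identity for $\|x^+-y^+\|^2$ and using $1-\tfrac1\alpha=-\tfrac{1-\alpha}{\alpha}$ yields the bridge identity
\[
\|x^+-y^+\|^2 = \alpha\|\xhat-\yhat\|^2+(1-\alpha)\|x-y\|^2-\frac{1-\alpha}{\alpha}\bigl\|(x^+-x)-(y^+-y)\bigr\|^2 .
\]
Everything after this is substitution.

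For \eqref{t:average char i} $\Rightarrow$ \eqref{t:average char ii}, I substitute the bridge identity for the left-hand side of \eqref{e:paafne}: the two copies of $\tfrac{1-\alpha}{\alpha}\|(x^+-x)-(y^+-y)\|^2$ cancel, and collecting the $\|x-y\|^2$ terms leaves $\alpha\|\xhat-\yhat\|^2\le(\alpha+\epsilon)\|x-y\|^2$, i.e. $\|\xhat-\yhat\|\le\sqrt{1+\epsilon/\alpha}\,\|x-y\|$, which is precisely \eqref{e:epsqnonexp} for $S$ with violation $\epsilon/\alpha$. Because the selection correspondence $x^+\leftrightarrow\xhat$ (and $y^+\leftrightarrow\yhat$) is onto in both directions and the inequalities above are algebraically equivalent, running the computation backwards gives \eqref{t:average char ii} $\Rightarrow$ \eqref{t:average char i}. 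The concluding ``Consequently'' clause is immediate: discarding the nonnegative subtracted term in \eqref{e:paafne} leaves $\|x^+-y^+\|^2\le(1+\epsilon)\|x-y\|^2$, so $T$ is pointwise almost nonexpansive at $y$ on $U$ with violation at most $\epsilon$.

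I do not expect a genuine obstacle; the only point requiring care is that $1/\alpha>1$, so $1-1/\alpha<0$ and $t(1-t)<0$ in the identity — one must \emph{not} treat $S$ as a convex combination and must track these signs, which is exactly why the minus sign in \eqref{e:paafne} and the factor $\tfrac{1-\alpha}{\alpha}$ reappear with the right sign. The secondary bookkeeping point is the set-valued matching of quantifiers described above, ensuring that a pointwise-at-$y$ property for $T$ transfers to a pointwise-at-$y$ property for $S$ and conversely.
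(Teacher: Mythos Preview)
Your argument is correct. The paper does not actually prove this lemma; it simply cites \cite[Proposition 2.1]{russell2018quantitative} as the source, so there is no ``paper's own proof'' to compare against. Your approach---the scalar identity $\|ta+(1-t)b\|^2 = t\|a\|^2+(1-t)\|b\|^2-t(1-t)\|a-b\|^2$ applied with $t=1/\alpha$, together with the bijection between selections $x^+\in Tx$ and $\xhat\in Sx$---is the standard route and is essentially what appears in the cited reference. The sign bookkeeping and the set-valued quantifier matching you flag are the only delicate points, and you handle them correctly.
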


The next lemma is the chain rule for pointwise a$\alpha$-fne mappings
(see, e.g., \cite[Proposition 2.4 (iii)]{russell2018quantitative}).
\begin{lemma}\label{t:av-comp av}
Let $T_j:\mathbb{E}\rightrightarrows \mathbb{E}$ for $j=1,2,\dots,m$
be pointwise a$\alpha$-f.n.e. on $U_j$ at all $ y_{j}\in S_j\subset \mathbb{E}$
with violation $\epsilon_{U_j}$ and
constant $\alpha_{U_j}\in (0, 1)$ where
$U_j\supseteq S_j$ for $j=1,2,\dots,m$.
If 
$T_{j}U_{j}\subseteq U_{j-1}$ and $T_{j}S_{j}\subseteq S_{j-1}$ for $j=2,3,\dots,m$, 
then the composite mapping $T:=T_1\circ T_{2}\circ \cdots \circ  T_m$ is pointwise a$\alpha$-fne at all 
$y\in S_m$ on $U_m$ with violation at most
\begin{subequations}
\begin{equation}
    \epsilon_T=\prod_{j=1}^m(1+\epsilon_{U_j})-1
\end{equation}
and averaging constant at least
\begin{equation}
 \alpha_T = \frac{m}{m-1 + \frac{1}{\max_{j=1,2,\dots,m}{\alpha_{U_j}}}}.
\end{equation}%
\end{subequations}
\end{lemma}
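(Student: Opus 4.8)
The plan is to reduce to the two-mapping case and then iterate along the partial composites $Q_j:=T_j\circ T_{j+1}\circ\cdots\circ T_m$, so that $Q_m=T_m$ and $Q_1=T$. First I would note that the nesting hypotheses propagate: from $T_kU_k\subseteq U_{k-1}$ and $T_kS_k\subseteq S_{k-1}$ for $k=2,\dots,m$ one obtains, for every $j$, that $Q_{j+1}U_m\subseteq U_j$ and $Q_{j+1}S_m\subseteq S_j$. Hence at each stage the image of the ``inner'' map $Q_{j+1}$ (applied to points of $U_m$, anchored at points of $S_m$) lies inside the set $U_j$ on which the ``outer'' map $T_j$ is pointwise a$\alpha$-fne, anchored at points of $S_j$. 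So it suffices to prove the two-mapping statement: if $T_1$ is pointwise a$\alpha$-fne on $U_1$ at points of $S_1$ with violation $\epsilon_1$ and constant $\alpha_1$, if $T_2$ is pointwise a$\alpha$-fne on $U_2$ at points of $S_2$ with violation $\epsilon_2$ and constant $\alpha_2$, and if $T_2U_2\subseteq U_1$ and $T_2S_2\subseteq S_1$, then $T_1\circ T_2$ is pointwise a$\alpha$-fne on $U_2$ at points of $S_2$ with violation $(1+\epsilon_1)(1+\epsilon_2)-1$ and a constant $\alpha_{12}$ obeying $\frac{\alpha_{12}}{1-\alpha_{12}}\le\frac{\alpha_1}{1-\alpha_1}+\frac{\alpha_2}{1-\alpha_2}$.

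For this two-mapping step I would use the equivalent quadratic rewriting of \eqref{e:paafne}: $T$ is pointwise a$\alpha$-fne at $y$ on $U$ with violation $\epsilon$ and constant $\alpha$ precisely when
\[
\|x^+-y^+\|^2+\frac{1-\alpha}{\alpha}\|(x-x^+)-(y-y^+)\|^2\le(1+\epsilon)\|x-y\|^2
\]
for all admissible selections. Fix $x,y\in U_2$ with $y\in S_2$; choose $x_2^+\in T_2x$, $y_2^+\in T_2y$, then $x^+\in T_1x_2^+$, $y^+\in T_1y_2^+$, so that $x^+\in(T_1\circ T_2)x$ and $y^+\in(T_1\circ T_2)y$ are arbitrary selections. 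By the nesting hypotheses $x_2^+,y_2^+\in U_1$ and $y_2^+\in S_1$, so the quadratic inequality for $T_2$ at $y$ and the one for $T_1$ at $y_2^+$ are both available. Setting $u:=(x-x_2^+)-(y-y_2^+)$ and $v:=(x_2^+-x^+)-(y_2^+-y^+)$ one has the telescoping identity $(x-x^+)-(y-y^+)=u+v$. Substituting the $T_2$-inequality into the $T_1$-inequality yields
\[
\|x^+-y^+\|^2+(1+\epsilon_1)\frac{1-\alpha_2}{\alpha_2}\|u\|^2+\frac{1-\alpha_1}{\alpha_1}\|v\|^2\le(1+\epsilon_1)(1+\epsilon_2)\|x-y\|^2 .
\]
It then remains to merge the two error terms into a single multiple of $\|u+v\|^2$; here I would invoke the elementary bound $a\|u\|^2+b\|v\|^2\ge\frac{ab}{a+b}\|u+v\|^2$, valid for all $a,b>0$, which reduces after clearing denominators to $\|au-bv\|^2\ge0$. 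Taking $a=(1+\epsilon_1)\frac{1-\alpha_2}{\alpha_2}$ and $b=\frac{1-\alpha_1}{\alpha_1}$ produces the a$\alpha$-fne inequality for $T_1\circ T_2$ with violation $(1+\epsilon_1)(1+\epsilon_2)-1$ and constant $\alpha_{12}$ determined by $\frac{1-\alpha_{12}}{\alpha_{12}}=\frac{ab}{a+b}$; since $1+\epsilon_1\ge1$ this gives $\frac{\alpha_{12}}{1-\alpha_{12}}=\frac1a+\frac1b\le\frac{\alpha_1}{1-\alpha_1}+\frac{\alpha_2}{1-\alpha_2}$, as required.

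Iterating this step along $Q_m,Q_{m-1},\dots,Q_1$, the violations multiply, so $1+\epsilon_T=\prod_{j=1}^m(1+\epsilon_{U_j})$, while the averaging constants satisfy $\frac{\alpha_T}{1-\alpha_T}\le\sum_{j=1}^m\frac{\alpha_{U_j}}{1-\alpha_{U_j}}$. Since $t\mapsto t/(1-t)$ is increasing on $(0,1)$, the sum is at most $m\,\frac{\alpha_{\max}}{1-\alpha_{\max}}$ with $\alpha_{\max}:=\max_{j}\alpha_{U_j}$, and inverting the same increasing map gives $\alpha_T\le\frac{m\alpha_{\max}}{1+(m-1)\alpha_{\max}}=\frac{m}{m-1+1/\alpha_{\max}}$; because a$\alpha$-fne with a given constant implies the same property with any larger constant in $(0,1)$, the composite $T$ is pointwise a$\alpha$-fne with the stated violation and averaging constant. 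The step I expect to be the main obstacle is keeping the \emph{sharp} coefficient $ab/(a+b)$ through the recursion --- a crude $\|u+v\|^2\le2\|u\|^2+2\|v\|^2$ would give a far worse, exponentially degrading constant instead of the correct harmonic-type expression --- while simultaneously bookkeeping which quantities multiply (the violations) and which only help (the factors $1+\epsilon_k$ appearing in the denominators of the constant recursion); the domain/anchor nesting at each level is routine but must be checked so that every component inequality is legitimately applied.
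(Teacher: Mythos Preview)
Your argument is correct and is exactly the standard proof of this chain rule: reduce to the two-mapping case, chain the two a$\alpha$-fne inequalities, and merge the cross terms via $a\|u\|^2+b\|v\|^2\ge\frac{ab}{a+b}\|u+v\|^2$, then iterate along the partial composites. The paper itself does not prove this lemma at all; it simply quotes it from \cite[Proposition~2.4(iii)]{russell2018quantitative}, and your proof is essentially the argument given there.
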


We now recall the characterization of set regularities and the inheritance of these characterizations on the
corresponding projectors.  The lemma below recalls sufficient conditions for projectors and reflectors to be a$\alpha$-fne.
\begin{lemma}[projectors/reflectors of $\epsilon$-super-regular sets, Proposition 3.4, \cite{luke2020convergence}]
\label{lem:nonexp.refector.projector}
Let $\Omega\subset\Ebb$ be nonempty closed, and let $U$ be a neighborhood of $\bar x\in \Omega$.
Let $\Lambda:= P_{\Omega}^{-1}(\bar x)\cap U$.
If $\Omega$ is $\epsilon$-super-regular at a distance at $\bar x$
relative to $\Lambda$ with constant $\epsilon_U$ on the neighborhood $U$ \eqref{eq:ele.subreg}, then the following statements hold:
\begin{enumerate}[(i)]
    \item If $\epsilon_U\in [0,1)$, the projector $P_\Omega$ satisfies
    \begin{align*}
    &\forall y\in\Lambda, \quad \|x^+-y^+\|\leq\sqrt{1+\hat{\epsilon}} \|x-y\|
\quad\forall x\in U,\, x^+\in P_\Omega x,\, y^+\in P_\Omega y\\
    &\mbox{ with } \hat{\epsilon}= 4\epsilon_U/(1-\epsilon_U)^2
\end{align*}
and
    \begin{align*}
    &\forall y\in\Lambda, \quad \|x^+-y^+\|^2 +  \|(x^+-x)-(y^+-y)\|^2
    \leq (1+\check{\epsilon}) \|x-y\|^2\\
&\,\forall x\in U,\, x^+\in P_\Omega x,\, y^+\in P_\Omega y\\
    &\mbox{ with } \check{\epsilon}= 4\epsilon_U(1+\epsilon_U)/(1-\epsilon_U)^2
\end{align*}
In other words, for $\epsilon_U\in [0,2\sqrt{3}/3-1)$\footnote{The restriction on $\epsilon_U$ is
required by definition for $P_\Omega$ to be a$\alpha$fne.} the projector $P_\Omega$ is
pointwise almost nonexpansive on $U$ at each $y\in \Lambda$
with violation $\hat\epsilon$,
and pointwise a$\alpha$-fne at each $y\in \Lambda$ with constant $\alpha=1/2$ and violation $\check\epsilon$.
\item If $\epsilon_U\in [0,1)$, the reflector $R_\Omega$ satisfies
\begin{align}\label{e:epstilde}
&\forall y\in\Lambda, \quad    \|x^+-y^+\|\leq\sqrt{1+\tilde{\epsilon}} \|x-y\|
\quad\forall x\in U,\, x^+\in R_\Omega x,\, y^+\in R_\Omega y\\
&\mbox{ with } \tilde{\epsilon} = 8\epsilon_U(1+\epsilon_U)/(1-\epsilon_U)^2
\end{align}

In other words, for $\epsilon_U\in [0,4\sqrt{2}/7-5/7)$\footnote{The restriction on $\epsilon_U$ is
required by definition for $R_\Omega$ to be a$\alpha$fne.} the reflector
$R_\Omega$
is pointwise almost nonexpansive at each $y\in\Lambda$
with violation $\tilde{\epsilon}$.
\end{enumerate}
\end{lemma}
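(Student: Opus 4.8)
The plan is to derive all three estimates from a single use of the defining inequality of $\epsilon$-super-regularity at a distance (Definition~\ref{definition: super-reg+}, i.e.\ \eqref{eq:ele.subreg}), after which only elementary algebra with norms and inner products remains; this follows the argument of \cite[Proposition~3.4]{luke2020convergence}. The crux is the matching with the definition. I would fix $y\in\Lambda$, $x\in U$, $x^+\in P_\Omega x$ and $y^+\in P_\Omega y$, and instantiate \eqref{eq:ele.subreg} (with $U_\epsilon:=U$) by taking its $\Lambda$-point to be $y$ --- admissible since $\Lambda\subseteq U$ --- with projection $y^+$, and its proximal-normal pair to be $(x^+,\,x-x^+)$. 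This pair lies in $\gph N^P_\Omega$ and meets the two side conditions of \eqref{eq:ele.subreg}: from $x^+\in P_\Omega x$ one gets $x-x^+\in N^P_\Omega(x^+)$ and $x^+\in P_\Omega\!\bigl(x^++(x-x^+)\bigr)=P_\Omega x$, while $x^++(x-x^+)=x\in U$. Then \eqref{eq:ele.subreg} reads
\[
  \bigl\langle (x-x^+)-(y-y^+),\; y^+-x^+\bigr\rangle\;\le\;\epsilon_U\,\bigl\|(x-x^+)-(y-y^+)\bigr\|\;\|y^+-x^+\|.
\]
Abbreviating $d:=x^+-y^+$ and $n:=(x-x^+)-(y-y^+)$, I would record the identity $x-y=d+n$ and rewrite the displayed inequality as $-\langle n,d\rangle\le\epsilon_U\|n\|\|d\|$. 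Everything below is extracted from these two facts and the triangle inequality.

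For the Lipschitz-type bound I would expand $\|d\|^2=\langle d,(x-y)-n\rangle\le\|d\|\,\|x-y\|+\epsilon_U\|n\|\,\|d\|$, cancel $\|d\|$ (the case $d=0$ being trivial), substitute $\|n\|\le\|x-y\|+\|d\|$, and solve for $\|d\|$, obtaining $\|x^+-y^+\|\le\tfrac{1+\epsilon_U}{1-\epsilon_U}\|x-y\|=\sqrt{1+\hat\epsilon}\,\|x-y\|$, the equality because $(1+\epsilon_U)^2=(1-\epsilon_U)^2+4\epsilon_U$. For the almost-firm bound I would expand $\|x-y\|^2=\|d\|^2+2\langle d,n\rangle+\|n\|^2$, rearrange to $\|d\|^2+\|n\|^2\le\|x-y\|^2+2\epsilon_U\|n\|\|d\|$, and bound the cross term by $\check\epsilon\|x-y\|^2$ using the Lipschitz bound together with $\|n\|\le\|x-y\|+\|d\|\le\tfrac{2}{1-\epsilon_U}\|x-y\|$; since $\|n\|=\|(x^+-x)-(y^+-y)\|$, this is the asserted estimate. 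For the reflector I would write $x^+=2\xi-x$, $y^+=2\eta-y$ with $\xi\in P_\Omega x$, $\eta\in P_\Omega y$; forming $d,n$ from $\xi,\eta$ gives $x^+-y^+=2d-(x-y)=d-n$, hence $\|x^+-y^+\|^2=\|x-y\|^2-4\langle d,n\rangle\le\|x-y\|^2+4\epsilon_U\|n\|\|d\|\le(1+\tilde\epsilon)\|x-y\|^2$ by the same bounds on $\|d\|=\|\xi-\eta\|$ and $\|n\|$ (equivalently, apply Lemma~\ref{lem:average char} with $\alpha=1/2$ to the almost-firmness of $P_\Omega$, so that $R_\Omega=2P_\Omega-\Id$ picks up the doubled violation $2\check\epsilon=\tilde\epsilon$).

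To finish, the ``in other words'' reformulations are immediate from the definitions \eqref{e:epsqnonexp} and \eqref{e:paafne} once one checks $\hat\epsilon,\check\epsilon<1$ for $\epsilon_U\in[0,2\sqrt3/3-1)$ and $\tilde\epsilon<1$ for $\epsilon_U\in[0,4\sqrt2/7-5/7)$ --- the binding endpoints being the positive roots of $3\epsilon^2+6\epsilon-1$ and $7\epsilon^2+10\epsilon-1$. I expect the only real obstacle to be the bookkeeping in the first step: one must identify the roles in Definition~\ref{definition: super-reg+} correctly and, above all, use that its neighborhood requirement constrains only the $\Lambda$-point and the sum $x+v$, and neither $x^+$ nor $y^+$ --- which is exactly what allows $x$ to range over all of $U$ while $y$ is restricted to $\Lambda$. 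The rest is routine scalar estimation.
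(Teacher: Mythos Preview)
Your argument is correct and is exactly the route taken in \cite[Proposition~3.4]{luke2020convergence}: one instantiation of \eqref{eq:ele.subreg} giving $-\langle n,d\rangle\le\epsilon_U\|n\|\|d\|$, from which the three estimates follow by the elementary manipulations you describe. Note, however, that the present paper does not supply its own proof of this lemma --- it is quoted verbatim from \cite{luke2020convergence} as indicated in the lemma's title --- so there is nothing in the paper to compare against beyond the citation; your write-up simply reconstructs the referenced argument.
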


In order to quantify the error in approximating the true projector with its linearization,
we require one final notion of set regularity.  The tangent cone (contingent cone) to a
closed subset $A\subset \Ebb$ at $\bar x\in A$ is defined by \cite[Chapter 4]{Aubin2009}
\begin{equation}
 \mathcal{T}_{A}(\bar x):=\left\{v\in \Ebb\,\mid\,\liminf _{h\to 0^{+}}{\frac {d_{A}(\bar x+hv)}{h}}=0\right\}.
\end{equation}
We say that a subset $A\subset \Ebb$ has the {\em Shapiro$^+$ property} at $\bar x\in A$ on a neighborhood
$V$ of $\bar x$ with constant $k>0$ if
\begin{equation}\label{e:Shapiro+}
    d_{\mathcal{T}_{A}(y)}(x-y)\leq k\|x-y\|,\ \forall x\in V\,,\,\forall y\in A\cap V.
\end{equation}
This definition is a specialization of the of the property stated in \cite{shapiro1994existence} to the case $m=1$.
Our definition is an extension in the sense that we do not require $x\in A$.


\section{Analysis of Fixed Points}
\label{s:characterization}
In this section, we  characterize the fixed points of a cyclic relaxed Douglas-Rachford operator as a subset of the convex hull of  points in the sets associated with the operator.  Since the interpretation of the fixed points is difficult, we show that the {\em shadows} of these points are easily interpretable in terms of fixed points of the cyclic projections mapping. 

\subsection{Characterization of fixed points}\label{sec:characterization.fixed.points}
The following lemma gives another representation of relaxed Douglas-Rachford operators. 
\begin{lemma}\label{lem:Tij.expression}
Let $A_i,A_j$ be closed subsets of $\Ebb$ and $\lambda\in [0,1]$. Let $T_{j,j+1}$ be the relaxed Douglas-Rachford operator defined as in \eqref{eq:DRl.Ai}. Let $x\in\Ebb$. Assume that $\{y\}=T_{i,j}x$. Then
\begin{equation}
    y = \lambda P_{A_j}R_{A_{j+1}}x+(1-2\lambda)P_{A_{j+1}}x+\lambda x.
\end{equation}
\end{lemma}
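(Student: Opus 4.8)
The plan is to prove the identity by the most direct route: substitute the reflector identity $R_A = 2P_A - \Id$ into the definition \eqref{eq:DRl.Ai} of $T_{j,j+1}$ and collect like terms. Writing $R_{A_{j+1}}x = 2P_{A_{j+1}}x - x$, and then, for a point $w$ lying in this reflection, $R_{A_j}w = 2P_{A_j}w - w$, one gets
\[
  R_{A_j}R_{A_{j+1}}x \;=\; 2P_{A_j}R_{A_{j+1}}x \;-\; 2P_{A_{j+1}}x \;+\; x .
\]
Adding $x$, multiplying by $\lambda/2$, and adding $(1-\lambda)P_{A_{j+1}}x$, the two occurrences of $P_{A_{j+1}}x$ combine with coefficient $-\lambda + (1-\lambda) = 1-2\lambda$ and the two copies of $x$ combine with coefficient $\lambda/2 + \lambda/2 = \lambda$, which yields exactly $\lambda P_{A_j}R_{A_{j+1}}x + (1-2\lambda)P_{A_{j+1}}x + \lambda x$. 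At the level of formal algebra the statement is therefore immediate.

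The one point that needs care is that $A_j$ and $A_{j+1}$ are not assumed convex, so $P_{A_j}$ and $P_{A_{j+1}}$ are in general set-valued, and $P_{A_{j+1}}x$ occurs both inside $R_{A_{j+1}}x$ and in the averaging term $(1-\lambda)P_{A_{j+1}}x$, where the two selections are a priori independent; so the display above is really an identity between Minkowski sums of subsets of $\Ebb$. This is where the hypothesis $\{y\}=T_{j,j+1}x$ enters. Unwinding the definitions, every element of $T_{j,j+1}x$ has the form $\lambda q - \lambda p + \lambda x + (1-\lambda)u$ with $p,u \in P_{A_{j+1}}x$ and $q \in P_{A_j}(2p-x)$. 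If $\lambda \in (0,1)$, then letting $u$ range over $P_{A_{j+1}}x$ with $p,q$ held fixed produces as many distinct values of $(1-\lambda)u$ as there are points of $P_{A_{j+1}}x$, so $T_{j,j+1}x$ being a singleton forces $P_{A_{j+1}}x = \{p\}$ and hence $u = p$; consequently $P_{A_j}R_{A_{j+1}}x = P_{A_j}(2p-x)$, and letting $q$ range over this set forces it to be a singleton as well. With both projections single-valued the Minkowski bookkeeping collapses to an ordinary identity among points, and $y = \lambda q - \lambda p + \lambda x + (1-\lambda)p = \lambda q + (1-2\lambda)p + \lambda x$, which is the claim. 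The boundary cases are trivial: for $\lambda = 0$ we have $T_{j,j+1} = P_{A_{j+1}}$ and the formula reads $y = P_{A_{j+1}}x$; for $\lambda = 1$ the averaging term drops out and the same substitution gives $y = q - p + x = P_{A_j}R_{A_{j+1}}x - P_{A_{j+1}}x + x$ directly.

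I expect the only genuine obstacle to be precisely this set-valued bookkeeping — making explicit that "expand the reflectors and collect terms" is legitimate rather than merely formal, which is exactly what the singleton hypothesis buys and which should be spelled out at least briefly. Everything else is a routine computation with the reflector identity.
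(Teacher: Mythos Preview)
Your proposal is correct and follows exactly the paper's approach: expand $R_{A_j}R_{A_{j+1}}x$ via $R_A = 2P_A - \Id$ and collect terms. Your second paragraph on set-valued bookkeeping (why the singleton hypothesis legitimizes treating the projections as points) is more careful than the paper, which simply writes the chain of equalities without comment; this extra care is fine but not something the authors themselves spell out.
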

\begin{proof}
    By definition of relaxed Douglas-Rachford operators, we have 
\begin{equation}
    \begin{array}{rl}
        y=T_{j,j+1}x =&\frac{\lambda}{2}(R_{A_j}R_{A_{j+1}}x+x)+(1-\lambda)P_{A_{j+1}}x  \\
         =& \frac{\lambda}{2}(2P_{A_j}R_{A_{j+1}}x-R_{A_{j+1}}x+x)+(1-\lambda)P_{A_{j+1}}x\\
         =& \frac{\lambda}{2}(2P_{A_j}R_{A_{j+1}}x-2P_{A_{j+1}}x+2x)+(1-\lambda)P_{A_{j+1}}x\\
         =&\lambda P_{A_j}R_{A_{j+1}}x + (1-2\lambda)P_{A_{j+1}}x+\lambda x.
    \end{array}
\end{equation}
 This concludes the proof.
\end{proof}

The next theorem says that each fixed point of a cyclic relaxed Douglas-Rachford operator is a linear combination of $2m$ points in $m$ sets associated with the operator; for
$\lambda\in[0,\frac{1}{2}]$, the linear combination is a convex combination.
\begin{theorem}[fixed points]\label{theo:characterization.fix.T}
Let $A_j$ be nonempty closed subset of $\mathbb E$, $j=1,\dots,m$.
Denote $A_{m+1}:=A_1$.
Let $\lambda\in [0,1]$.
Define $T\equiv T_{1,2}\circ T_{2,3}\circ\dots\circ T_{m-1,m}\circ T_{m,m+1}$, where $T_{j,j+1}$ defined as in \eqref{eq:DRl.Ai}.
Assume that $\Fix T\neq\emptyset$ and 
$T_{i,i+1}$ is single-valued on $T_{i+1,i+2}\circ\dots\circ T_{m,m+1}\Fix T$, $i=1,\dots,m$.
Set
\begin{equation}
    \mathcal{M}_1:=\left\{
    x~\middle|~ 
    \begin{array}{rl}
    &x=\sum_{j=1}^{m}[\frac{\lambda^j}{1-\lambda^m}P_{A_j}y_{j+1}+\frac{(1-2\lambda)\lambda^{j-1}}{1-\lambda^m}P_{A_{j+1}}x_{j+1}],\\
    &x_{m+1}=x,\\
    &x_j=T_{j,j+1}x_{j+1},\qquad j=m,\dots,1,\\
    
    &y_{j+1}\in R_{A_{j+1} }x_{j+1},\qquad j=1,\dots,m
    \end{array}
    \right\},
\end{equation}
and
\begin{equation}
    \mathcal{M}_2:=\left\{
    x~\middle|~ 
    \begin{array}{rl}
    &P_{A_1}x=\sum_{j=1}^{m}P_{A_j}y_{j+1}-\sum_{j=1}^{m-1}P_{A_{j+1}}x_{j+1},\\
    &x_{m+1}=x,\\
    &x_j=T_{j,j+1}x_{j+1},\qquad j=m,\dots,1,\\
    
    &y_{j+1}\in R_{A_{j+1} }x_{j+1},\qquad j=1,\dots,m
    \end{array}
    \right\}.
\end{equation}
\begin{enumerate}[(i)]
    \item\label{theo:characterization.fix.T i} If $\lambda\in [0,1)$, then $\sum_{j=1}^m [\frac{\lambda^j}{1-\lambda^m}+\frac{(1-2\lambda)\lambda^{j-1}}{1-\lambda^m}]=1$ and $\Fix T= \mathcal{M}_1$.
    \item\label{theo:characterization.fix.T ii}
If $\lambda\in[0,\frac{1}{2}]$, then $\frac{\lambda^j}{1-\lambda^m}$ and $\frac{(1-2\lambda)\lambda^{j-1}}{1-\lambda^m}$ are nonnegative.
\item\label{theo:characterization.fix.T iii} If $\lambda=1$, then  $\Fix T = \mathcal{M}_2$
\end{enumerate}
\end{theorem}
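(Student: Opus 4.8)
The plan is to unfold the composition defining $T$ and exploit the fixed-point condition $x_1=x_{m+1}$. First I would apply Lemma~\ref{lem:Tij.expression} to each factor: since by hypothesis $T_{i,i+1}$ is single-valued on $T_{i+1,i+2}\circ\dots\circ T_{m,m+1}\Fix T$, for $x\in\Fix T$ the intermediate iterates $x_j:=T_{j,j+1}x_{j+1}$ (with $x_{m+1}:=x$) are genuine points and there exist $y_{j+1}\in R_{A_{j+1}}x_{j+1}$ with
\[
x_j = \lambda P_{A_j}y_{j+1} + (1-2\lambda)P_{A_{j+1}}x_{j+1} + \lambda x_{j+1},\qquad j=m,\dots,1.
\]
Abbreviating $c_j:=\lambda P_{A_j}y_{j+1}+(1-2\lambda)P_{A_{j+1}}x_{j+1}$, so that $x_j=c_j+\lambda x_{j+1}$, and iterating this recursion from $j=1$ downward produces the telescoped identity $x_1 = \sum_{j=1}^m\lambda^{j-1}c_j + \lambda^m x_{m+1}$, which is the engine for all three parts.

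For part (i), the coefficient identity follows from the geometric-series sums $\sum_{j=1}^m\lambda^{j-1}=(1-\lambda^m)/(1-\lambda)$ and $\sum_{j=1}^m\lambda^{j}=\lambda(1-\lambda^m)/(1-\lambda)$ together with $\lambda+(1-2\lambda)=1-\lambda$. For the set equality, $x\in\Fix T$ is equivalent to $x=x_1=x_{m+1}$; inserting this into the telescoped identity gives $(1-\lambda^m)x=\sum_{j=1}^m\lambda^{j-1}c_j$, and since $\lambda\in[0,1)$ we may divide by $1-\lambda^m\neq0$ to obtain precisely the defining relation of $\mathcal{M}_1$, so $\Fix T\subseteq\mathcal{M}_1$. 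Conversely, if $x\in\mathcal{M}_1$ then it comes equipped with data $(x_j)$, $(y_{j+1})$ satisfying $x_j=T_{j,j+1}x_{j+1}$, $y_{j+1}\in R_{A_{j+1}}x_{j+1}$, and $x=\tfrac{1}{1-\lambda^m}\sum_j\lambda^{j-1}c_j$; the telescoping then yields $x_1=\sum_j\lambda^{j-1}c_j+\lambda^m x=(1-\lambda^m)x+\lambda^m x=x$, so $x\in\Fix T$. Part (ii) is then immediate: for $\lambda\in[0,1/2]$ one has $\lambda^j\ge0$, $1-2\lambda\ge0$, and $1-\lambda^m>0$, so every coefficient in the $\mathcal{M}_1$ representation is nonnegative, and by part (i) they sum to $1$.

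For part (iii) I would run the identical telescoping with $\lambda=1$, in which case $T_{j,j+1}$ is the plain Douglas--Rachford operator and $c_j=P_{A_j}y_{j+1}-P_{A_{j+1}}x_{j+1}$. Now $1-\lambda^m=0$, so the fixed-point condition $x_1=x_{m+1}$ turns the telescoped identity into $\sum_{j=1}^m\bigl(P_{A_j}y_{j+1}-P_{A_{j+1}}x_{j+1}\bigr)=0$. Splitting off the $j=m$ term of the second sum, which equals $P_{A_{m+1}}x_{m+1}=P_{A_1}x$ because $A_{m+1}=A_1$ and $x_{m+1}=x$, rearranges this to $P_{A_1}x=\sum_{j=1}^mP_{A_j}y_{j+1}-\sum_{j=1}^{m-1}P_{A_{j+1}}x_{j+1}$, the defining relation of $\mathcal{M}_2$; hence $\Fix T\subseteq\mathcal{M}_2$, and reversing the computation on the data carried by a point of $\mathcal{M}_2$ gives the reverse inclusion.

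The geometric-series bookkeeping and the sign checks are routine; the step that requires care is the passage through Lemma~\ref{lem:Tij.expression} when the individual operators $P_{A_j}$, $P_{A_{j+1}}$, $R_{A_{j+1}}$ are set-valued. I would use the single-valuedness hypothesis on the composites $T_{i+1,i+2}\circ\dots\circ T_{m,m+1}$ over $\Fix T$ to ensure each $x_j$ is a genuine point, then fix for each $j$ one compatible selection $y_{j+1}\in R_{A_{j+1}}x_{j+1}$ (and one value of $P_{A_j}y_{j+1}$) for which the expansion of Lemma~\ref{lem:Tij.expression} holds with equality, and propagate those selections through the telescoping. This selection bookkeeping --- not any hard estimate --- is the only delicate point, and it is precisely what makes the converse inclusions clean, since $\mathcal{M}_1$ and $\mathcal{M}_2$ are defined to carry exactly this auxiliary data.
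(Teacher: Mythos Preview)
Your proposal is correct and follows essentially the same route as the paper: apply Lemma~\ref{lem:Tij.expression} to each factor, telescope the recursion $x_j=c_j+\lambda x_{j+1}$ to obtain $x_1=\sum_{j=1}^m\lambda^{j-1}c_j+\lambda^m x_{m+1}$, and then read off the three parts from the fixed-point condition $x_1=x_{m+1}$. Your treatment is in fact slightly more explicit than the paper's on the converse inclusions and on the selection bookkeeping for the set-valued projectors, but the argument is the same.
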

\begin{proof}
Part \eqref{theo:characterization.fix.T i}.
By assumption, $T$ is single-valued on $\Fix T$,
so $z=Tz$ for all $z\in \Fix T$. Let $x$ be any point and set $x_{m+1}:=x$ and
$x_j:=T_{j,j+1}x_{j+1}$, $j=m,\dots,1$.  Let $y_{j+1}\in R_{A_{j+1} }x_{j+1}$, $j=1,\dots,m$.
We first show that $x\in\Fix T$ is equivalent to $x_1=x$. Indeed, we have
\begin{equation}\label{eq:x1.equal.x}
    x_1=T_{1,2}x_2=T_{1,2}\circ T_{2,3}x_3=T_{1,2}\circ T_{2,3}\circ\dots\circ T_{m,m+1}x_{m+1}=Tx=x,
\end{equation}
hence $x\in\Fix T$ if and only if $x_1=x$.
Next we show that $x_1=x$ if and only if $x\in \mathcal{M}_1$.  To see this, note that \eqref{eq:x1.equal.x} and Lemma \ref{lem:Tij.expression} yield
\begin{equation}\label{e:inter}
    \begin{array}{rl}
         x=x_1=T_{1,2}x_2
         =& \lambda P_{A_1}y_2+(1-2\lambda)P_{A_2}x_2+\lambda x_2\\
         =& \lambda P_{A_1}y_2+(1-2\lambda)P_{A_2}x_2+\lambda T_{2,3}x_3\\
         =&\lambda P_{A_1}y_2 + (1-2\lambda)P_{A_2}x_2 +\lambda[\lambda P_{A_2}y_3+(1-2\lambda)P_{A_3}x_3+\lambda x_3]\\
         =&\lambda P_{A_1}y_2+ (1-2\lambda)P_{A_2}x_2+\lambda^2 P_{A_2}y_3+\lambda(1-2\lambda)P_{A_3}x_3+\lambda^2 x_3\\
         =&\dots\\
         =&
         \sum_{j=1}^m \lambda^jP_{A_j}y_{j+1}+(1-2\lambda)\sum_{j=1}^m \lambda^{j-1}P_{A_{j+1}}x_{j+1}+\lambda^mx_{m+1}.
    \end{array}
\end{equation}
Since $x_{m+1}=x$, \eqref{e:inter} is equivalent to
\begin{equation}\label{eq:.x}
    (1-\lambda^m)x=\sum_{j=1}^m \lambda^jP_{A_j}y_{j+1}+(1-2\lambda)\sum_{j=1}^m \lambda^{j-1}P_{A_{j+1}}x_{j+1},
\end{equation}
which is equivalent to
\begin{equation}
    x=\sum_{j=1}^m\left[ \frac{\lambda^j}{1-\lambda^m}P_{A_j}y_{j+1}+  \frac{(1-2\lambda)\lambda^{j-1}}{1-\lambda^m}P_{A_{j+1}}x_{j+1}\right].
\end{equation}
In other words, $x\in\mathcal{M}_1$.  Since all of the above are equivalence relations, this yields the claimed equivalence of $\Fix T$ and $\mathcal{M}_1$.

In addition, we have
\begin{equation}
\begin{array}{rl}
    \sum_{j=1}^m \left[\frac{\lambda^j}{1-\lambda^m}+\frac{(1-2\lambda)\lambda^{j-1}}{1-\lambda^m}\right]=&\frac{1}{1-\lambda^m}\sum_{j=1}^m \lambda^j+\frac{1-2\lambda}{1-\lambda^m}\sum_{j=1}^m \lambda^{j-1}\\[5pt]
    =&\frac{1}{1-\lambda^m}\frac{\lambda(1-\lambda^m)}{1-\lambda}+\frac{1-2\lambda}{1-\lambda^m}\frac{1-\lambda^m}{1-\lambda}\\[5pt]
    =&\frac{\lambda}{1-\lambda}+\frac{1-2\lambda}{1-\lambda}=1.
\end{array}
\end{equation}

Part \eqref{theo:characterization.fix.T ii}. This is clear.

Part \eqref{theo:characterization.fix.T iii}. By \eqref{eq:.x}, with $\lambda=1$, the result is immediate.
\end{proof}
\begin{remark}
In \cite[Theorem 3.13]{luke2020convergence}, Luke and Martins demonstrated that each fixed point of a relaxed Douglas-Rachford operator lies on a line segment, with one endpoint belonging to the ``inner'' set (i.e. the first reflected set in the Douglas-Rachford operator). Our characterization of the fixed points of the cyclic relaxed Douglas-Rachford operator in Theorem \ref{theo:characterization.fix.T} shares a similar structure but one important difference.
In \cite[Theorem 3.13]{luke2020convergence} the fixed points diverge to the horizon in the direction of the gap as the relaxation parameter $\lambda$ goes to $1$.  In our case the fixed points lie in the convex hull of $2m$ points distributed within the $m$ sets, each set containing two points.  Thus, for all  $\lambda< 1$ the fixed points are bounded if the sets are bounded.  In other words, if the sets are bounded, the cyclic relaxed Douglas-Rachford algorithm cannot diverge, and hence possesses cluster points.  This coincides with what was known in the convex-consistent case \cite[Theorem 3.1]{BorTam14} and \cite[Theorem 2]{luke2018relaxed}, but does not rely upon convexity or consistency.
\end{remark}
Denote by $\conv(S)$ the convex hull of a given set $S\subset \mathbb E$.
The following corollary is a direct consequence of Theorem \ref{theo:characterization.fix.T}.

\begin{corollary}\label{coro:fixed.points.in.convexhull}
Let all assumptions and notation of Theorem \ref{theo:characterization.fix.T} hold.
Assume $\lambda\in[0,\frac{1}{2}]$.
Then
\begin{equation}
    \Fix T\subseteq \conv \left(\bigcup_{j=1}^m A_j\right).
\end{equation}
\end{corollary}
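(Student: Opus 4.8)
The plan is to read the inclusion directly off the representation of $\Fix T$ furnished by Theorem~\ref{theo:characterization.fix.T}. Since $\lambda\in[0,\tfrac12]\subseteq[0,1)$, part~(i) of that theorem applies and gives both $\Fix T=\mathcal{M}_1$ and the identity $\sum_{j=1}^m\bigl[\tfrac{\lambda^j}{1-\lambda^m}+\tfrac{(1-2\lambda)\lambda^{j-1}}{1-\lambda^m}\bigr]=1$. Hence any $x\in\Fix T$ admits the expression
\[
x=\sum_{j=1}^m\left[\frac{\lambda^j}{1-\lambda^m}\,P_{A_j}y_{j+1}+\frac{(1-2\lambda)\lambda^{j-1}}{1-\lambda^m}\,P_{A_{j+1}}x_{j+1}\right]
\]
for suitable $x_{j+1}$, $y_{j+1}$ as in the definition of $\mathcal{M}_1$, and the $2m$ scalar coefficients occurring here sum to $1$.

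Next I would invoke part~(ii) of Theorem~\ref{theo:characterization.fix.T}: for $\lambda\in[0,\tfrac12]$ each coefficient $\tfrac{\lambda^j}{1-\lambda^m}$ and $\tfrac{(1-2\lambda)\lambda^{j-1}}{1-\lambda^m}$ is nonnegative. Combined with the fact that they sum to $1$, the displayed formula exhibits $x$ as a convex combination of the $2m$ points $P_{A_j}y_{j+1}$ ($j=1,\dots,m$) and $P_{A_{j+1}}x_{j+1}$ ($j=1,\dots,m$). By definition of the projector these points satisfy $P_{A_j}y_{j+1}\in A_j$ and $P_{A_{j+1}}x_{j+1}\in A_{j+1}$, so all $2m$ of them belong to $\bigcup_{j=1}^m A_j$. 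A convex combination of points drawn from a set $S$ lies in $\conv S$, whence $x\in\conv\bigl(\bigcup_{j=1}^m A_j\bigr)$. Since $x\in\Fix T$ was arbitrary, the inclusion follows.

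I do not expect any genuine obstacle: the substantive bookkeeping — nonnegativity of the coefficients (which would fail for $\lambda>\tfrac12$, since then $1-2\lambda<0$) and their summing to one — is already carried out inside Theorem~\ref{theo:characterization.fix.T}, and the remaining ingredients (projections land in the target set; $\conv S$ is closed under convex combinations) are elementary. The boundary cases are subsumed automatically: at $\lambda=\tfrac12$ the second family of coefficients vanishes and $x$ is a convex combination of the points $P_{A_j}y_{j+1}\in A_j$, while at $\lambda=0$ only the $j=1$ term of the second sum survives, giving $x=P_{A_2}x_2\in A_2$.
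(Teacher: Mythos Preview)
Your argument is correct and is exactly the approach the paper intends: the corollary is stated there as ``a direct consequence of Theorem~\ref{theo:characterization.fix.T}'' with no further proof, and you have simply written out the details implicit in parts~\eqref{theo:characterization.fix.T i} and~\eqref{theo:characterization.fix.T ii} of that theorem.
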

\begin{remark}\label{rem:intersection.subset.fixed.points}
Let $T$ be the cyclic relaxed Douglas-Rachford operator with $\lambda\in[0,1]$.
If problem \eqref{eq:multiset.model} is consistent, i.e., $\bigcap_{i=1}^m A_i\ne \emptyset$, then $\bigcap_{i=1}^m A_i \subseteq \Fix T$.
Indeed, take $y\in\bigcap_{i=1}^m A_i$, then  $y\in A_i$, for $i=1,\dots,m$. 
This gives $P_{A_i}y= y$, for $i=1,\dots,m$. We have 
\begin{equation*}
\begin{array}{rl}
    T_{i,i+1}y =&\frac{\lambda}{2}(R_{A_i}R_{A_{i+1}}+\Id)y+(1-\lambda)P_{A_{i+1}}y\\
    =&\frac{\lambda}{2}(R_{A_i}(2P_{A_{i+1}}y-y)+y)+(1-\lambda)y\\
    =&\frac{\lambda}{2}(R_{A_i}(2y-y)+y)+(1-\lambda)y\\
    =&\frac{\lambda}{2}((2P_{A_i}y-y)+y) + (1-\lambda)y\\
    =&\frac{\lambda}{2}((2y-y)+y) + (1-\lambda)y\\
    =&y.
    \end{array}
\end{equation*}
Therefore, we obtain
\begin{equation*}
\begin{array}{rl}
    Ty =& T_{1,2}\circ T_{2,3}\circ\dots \circ T_{m-1,m}\circ T_{m,m+1}y\\
    =&T_{1,2}\circ T_{2,3}\circ\dots \circ T_{m-1,m}y\\
    &\dots\\
    =&T_{1,2}y = y.
\end{array}
\end{equation*}
In the case where the sets $A_i$ are convex, it is shown in \cite[Proposition 1]{luke2018relaxed} $\lambda\in (0,1)$ that $\bigcap_{i=1}^m A_i = \Fix T$.
These results indicate that the algorithm can find points in the intersection of the sets in consistent case.
Theorem  \ref{theo:characterization.fix.T}, extends this to inconsistent and nonconvex feasibility.
\end{remark}
\begin{remark}
    Remark \ref{rem:intersection.subset.fixed.points} indicates that  $\Fix T$ contains the intersection of the sets $A_1,\dots,A_m$. 
    Corollary \ref{coro:fixed.points.in.convexhull} provides further information saying that $\Fix T$ is a subset of the convex hull of the union of the sets $A_1,\dots,A_m$.
    In short, we obtain
    \begin{equation}
        \bigcap_{i=1}^m A_i\quad
        \overset{{\lambda\in[0,1]}}{\subseteq}
        \quad\Fix T\quad
        \overset{{\lambda\in[0,\tfrac12]}}{\subseteq}
        \quad \conv \left(\bigcup_{i=1}^m A_i\right).
    \end{equation}
\if{    These inclusions are illustrated in Figure \ref{fig:fixed.points.in.convex.hull.union}
    \begin{figure}
        \centering
        \definecolor{zzttqq}{rgb}{0.6,0.2,0}
\begin{tikzpicture}[line cap=round,line join=round,>=triangle 45,x=1cm,y=1cm]
\clip(-4.34,-3.24) rectangle (3.42,2.64);
\fill[line width=1pt,color=zzttqq,fill=zzttqq,fill opacity=0.10000000149011612] (-1.36,0.28) -- (-1.42,-1.2) -- (0.66,-1.36) -- (0.64,0.12) -- cycle;
\draw [line width=1pt,dash pattern=on 1pt off 1pt] (-1.94,2.42)-- (1.26,2.16);
\draw [line width=1pt] (-2.1044006364236094,0.3966075517094181) circle (2.0300602377899635cm);
\draw [line width=1pt] (1.0945500407466338,0.12369280918933612) circle (2.0430175389272014cm);
\draw [line width=1pt] (-0.49125363748246487,-1.586468573927244) circle (1.3948664704085714cm);
\draw [line width=1pt,dash pattern=on 1pt off 1pt] (-3.311517430263942,-1.2355730801706688)-- (-1.3206707701622409,-2.707949599307221);
\draw [line width=1pt,dash pattern=on 1pt off 1pt] (0.32565026558487375,-2.7170969317103904)-- (2.102287290052563,-1.6534921279304555);
\draw (-2.6,1.42) node[anchor=north west] {$A_1$};
\draw (1.38,1.14) node[anchor=north west] {$A_2$};
\draw (-0.68,-1.8) node[anchor=north west] {$A_3$};
\draw [line width=1pt,color=zzttqq] (-1.36,0.28)-- (-1.42,-1.2);
\draw [line width=1pt,color=zzttqq] (-1.42,-1.2)-- (0.66,-1.36);
\draw [line width=1pt,color=zzttqq] (0.66,-1.36)-- (0.64,0.12);
\draw [line width=1pt,color=zzttqq] (0.64,0.12)-- (-1.36,0.28);
\draw (-0.6,-0.7) node[anchor=north west] {$\mathrm{Fix}\ T$};
\end{tikzpicture}
        \caption{Illustration of the fixed points}
        \label{fig:fixed.points.in.convex.hull.union}
    \end{figure}
    }\fi
\end{remark}

The following proposition provides a comparison of the shadow of the fixed points of the cyclic relaxed Douglas-Rachford operator with the fixed points of cyclic projections for affine constraints.

\begin{proposition}\label{prop:shadow.fixed.point.subset.CP}
    Let $A_j$ be nonempty closed affine subsets of $\mathbb E$, $j=1,\dots,m$.
Denote $A_{m+1}:=A_1$.
Let $\lambda\in[0,1]$.
Define $T_{CDR\lambda}\equiv T_{1,2}\circ T_{2,3}\circ\dots\circ T_{m-1,m}\circ T_{m,m+1}$, where $T_{j,j+1}$
defined as in \eqref{eq:DRl.Ai}.
Assume that $\Fix T_{CDR\lambda}\neq\emptyset$ and that $T_{i,i+1}$ is single-valued on $T_{i+1,i+2}\circ\dots\circ T_{m,m+1}\Fix T_{CDR\lambda}$, $i=1,\dots,m$.
Define $T_{\overline{CP}}\equiv P_{A_1}\circ P_{A_2}\circ\dots\circ P_{A_{m}}\circ P_{A_1}$. Then
\begin{enumerate}[(i)]
\item\label{prop:shadow.fixed.point.subset.CP i}
\begin{equation}\label{eq:shadow.sunset.CP}
    P_{A_1}(\Fix T_{CDR\lambda})\subseteq \Fix T_{\overline{CP}},
\end{equation}
consequently, if $\Fix T_{CDR\lambda}$ is nonempty, then $\Fix T_{\overline{CP}}$ is also nonempty.
\item\label{prop:shadow.fixed.point.subset.CP ii}
\begin{equation}\label{e:extended CP}
    T_{\overline{CP}}(\Fix T_{CDR\lambda})\subseteq \Fix T_{\overline{CP}}.
\end{equation}
\item\label{prop:shadow.fixed.point.subset.CP iii}
\begin{equation}\label{e:CDR0}
    P_{A_1}T_{CDR0}(\Fix T_{CDR\lambda})\subseteq \Fix P_{A_1}T_{CDR0}.
\end{equation}
\end{enumerate}
\end{proposition}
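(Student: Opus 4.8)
The plan is to exploit the single fact that separates the affine case from the general one: projectors onto affine subspaces are \emph{affine} maps, so $P_{A_j}\bigl(\sum_i\alpha_i u_i\bigr)=\sum_i\alpha_i P_{A_j}(u_i)$ whenever $\sum_i\alpha_i=1$, and $P_{A_j}$ is idempotent with $R_{A_j}=2P_{A_j}-\Id$, hence $\tfrac12(R_{A_j}x+x)=P_{A_j}x$. In particular every $T_{i,j}$ in \eqref{eq:DRl.Ai} is an affine single-valued map, so the single-valuedness hypotheses carry no content here and are retained only to match Theorem~\ref{theo:characterization.fix.T}. Fix $x\in\Fix T_{CDR\lambda}$ and, exactly as in the proof of Theorem~\ref{theo:characterization.fix.T}, form the chain $x_{m+1}:=x$, $x_j:=T_{j,j+1}x_{j+1}$ for $j=m,\dots,1$, so that $x_1=x$. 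Write $z_j:=P_{A_j}x_j$; since $A_{m+1}=A_1$ and $x_1=x_{m+1}=x$, we have $z_1=P_{A_1}x=z_{m+1}$.

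The key step is the identity
$$P_{A_j}x_j=P_{A_j}P_{A_{j+1}}x_{j+1},\qquad j=1,\dots,m,$$
equivalently $z_j=P_{A_j}z_{j+1}$, which in fact holds for arbitrary points and does not use that $x$ is a fixed point. To obtain it, apply $P_{A_j}$ to the representation $x_j=\lambda P_{A_j}R_{A_{j+1}}x_{j+1}+(1-2\lambda)P_{A_{j+1}}x_{j+1}+\lambda x_{j+1}$ of Lemma~\ref{lem:Tij.expression}; since the coefficients $\lambda,\,1-2\lambda,\,\lambda$ sum to $1$ and $P_{A_j}$ is affine and idempotent, this gives $z_j=\lambda P_{A_j}R_{A_{j+1}}x_{j+1}+(1-2\lambda)P_{A_j}P_{A_{j+1}}x_{j+1}+\lambda P_{A_j}x_{j+1}$. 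Applying $P_{A_j}$ to the affine identity $P_{A_{j+1}}x_{j+1}=\tfrac12 R_{A_{j+1}}x_{j+1}+\tfrac12 x_{j+1}$ shows $\lambda P_{A_j}R_{A_{j+1}}x_{j+1}+\lambda P_{A_j}x_{j+1}=2\lambda P_{A_j}P_{A_{j+1}}x_{j+1}$; substituting this makes the $\lambda$-dependence cancel and leaves $z_j=P_{A_j}P_{A_{j+1}}x_{j+1}$.

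The three conclusions then follow by telescoping. For \eqref{eq:shadow.sunset.CP}: by idempotency, $T_{\overline{CP}}(P_{A_1}x)=P_{A_1}P_{A_2}\cdots P_{A_m}\bigl(P_{A_1}(P_{A_1}x)\bigr)=P_{A_1}P_{A_2}\cdots P_{A_m}(z_{m+1})$, and applying $z_j=P_{A_j}z_{j+1}$ repeatedly collapses the right-hand side to $z_1=P_{A_1}x$; hence $P_{A_1}x\in\Fix T_{\overline{CP}}$, and nonemptiness of $\Fix T_{CDR\lambda}$ transfers to $\Fix T_{\overline{CP}}$. The same telescoping applied to $x$ itself gives $T_{\overline{CP}}(x)=P_{A_1}P_{A_2}\cdots P_{A_m}(z_{m+1})=z_1=P_{A_1}x$, so $T_{\overline{CP}}(\Fix T_{CDR\lambda})=P_{A_1}(\Fix T_{CDR\lambda})\subseteq\Fix T_{\overline{CP}}$ by the first part, which is \eqref{e:extended CP}. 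Finally, setting $\lambda=0$ in \eqref{eq:DRl.Ai} gives $T_{CDR0}=P_{A_2}\circ P_{A_3}\circ\cdots\circ P_{A_m}\circ P_{A_1}$, whence $P_{A_1}T_{CDR0}=T_{\overline{CP}}$ and \eqref{e:CDR0} is just a restatement of \eqref{e:extended CP}.

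The only genuinely non-mechanical point is the key identity $z_j=P_{A_j}P_{A_{j+1}}x_{j+1}$, i.e.\ recognizing that for affine sets the ``inner shadow'' of a single relaxed Douglas--Rachford step is independent of $\lambda$ and equals one composed projection. This is precisely where affineness is indispensable: for merely prox-regular or super-regular sets the shadow of a relaxed-DR step is not a single projection and the telescoping breaks down, which is why Proposition~\ref{prop:shadow.fixed.point.subset.CP} is confined to affine constraints while the nonconvex analysis is deferred to Section~\ref{s:shadows}. Everything else is bookkeeping: tracking the chain $x_1,\dots,x_{m+1}$ and using affineness and idempotency of the $P_{A_j}$.
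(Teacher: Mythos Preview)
Your proof is correct and follows essentially the same approach as the paper: both set up the chain $x_{m+1}=x$, $x_j=T_{j,j+1}x_{j+1}$, establish the key identity $P_{A_j}x_j=P_{A_j}P_{A_{j+1}}x_{j+1}$ via affineness of the projectors, and telescope. The only cosmetic difference is that the paper first rewrites $x_j=\lambda P_{A_j}R_{A_{j+1}}x_{j+1}-\lambda R_{A_{j+1}}x_{j+1}+P_{A_{j+1}}x_{j+1}$ (coefficients $\lambda,-\lambda,1$) so that the $\lambda$-terms cancel immediately upon applying $P_{A_j}$, whereas you work from the Lemma~\ref{lem:Tij.expression} form (coefficients $\lambda,1-2\lambda,\lambda$) and then use $P_{A_{j+1}}=\tfrac12(R_{A_{j+1}}+\Id)$ to collapse; these are equivalent manipulations. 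Your explicit derivations of parts~\eqref{prop:shadow.fixed.point.subset.CP ii} and~\eqref{prop:shadow.fixed.point.subset.CP iii} (in particular the observation $P_{A_1}T_{CDR0}=T_{\overline{CP}}$) spell out what the paper leaves as ``follows easily from~\eqref{prop:shadow.fixed.point.subset.CP i}.''
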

\begin{proof}
Part \eqref{prop:shadow.fixed.point.subset.CP i}.  Let $x\in\Fix T_{CDR\lambda}$.
Set $x_{m+1}=x$. Set $x_i=T_{i,i+1}x_{i+1}$, for all $i=m,\dots,1$. Using \eqref{eq:x1.equal.x} we get $x_1=x$.
By Lemma \ref{lem:Tij.expression}, we have
\begin{equation}\label{eq:rep.x}
    \begin{array}{rl}
         x=x_1=T_{1,2}x_2 =&\lambda P_{A_1}R_{A_2}x_2+(1-2\lambda)P_{A_2}x_2+\lambda x_2 \\
         =& \lambda P_{A_1}R_{A_2}x_2-\lambda(2P_{A_2}x_2-x_2)+P_{A_2}x_2\\
         =&\lambda P_{A_1}R_{A_2}x_2-\lambda R_{A_2}x_2+P_{A_2}x_2.
    \end{array}
\end{equation}
From this and the assumption that $A_1$ is affine, it follows that
\begin{equation}\label{eq:PA1x.equal.PA1PA2x}
    P_{A_1}x=P_{A_1}x_1=\lambda P_{A_1}R_{A_2}x_2-\lambda P_{A_1} R_{A_2}x_2+P_{A_1}P_{A_2}x_2=P_{A_1}P_{A_2}x_2.
\end{equation}
Repeating this for $P_{A_i}$, we obtain
\begin{equation}
    P_{A_i}x_i = P_{A_i}P_{A_{i+1}}x_{i+1}, \quad  i=1,\dots,m.
\end{equation}
This implies that
\begin{equation}
    \begin{array}{rl}
         P_{A_1}x=P_{A_1}x_1=& P_{A_1}P_{A_2}x_2\\ =&P_{A_1}P_{A_2}P_{A_3}x_3\\
         =& \dots\\
        =&P_{A_1}P_{A_2}P_{A_3}\dots P_{A_m}P_{A_{m+1}}x_{m+1}\\
        =&T_{\overline{CP}}x = T_{\overline{CP}}P_{A_1}x.
    \end{array}
\end{equation}
Thus $P_{A_1}x\in\Fix  T_{\overline{CP}}$, as claimed.

Parts \eqref{prop:shadow.fixed.point.subset.CP ii} and \eqref{prop:shadow.fixed.point.subset.CP iii}
follow easily from \eqref{prop:shadow.fixed.point.subset.CP i}.
%
\end{proof}

\begin{remark}\label{rem:fix.CP.bar.eq.fix.CP}
The use of the extended cyclic projections mapping $T_{\overline{CP}}$ defined in
Proposition \ref{prop:shadow.fixed.point.subset.CP} is purely technical.
It is easy to see that  $\Fix T_{\overline{CP}}= \Fix (P_{A_1}\dots P_{A_m})$.
\end{remark}

In practice, have we monitored the shadow sequence $|P_{A_1} x^{(j)} - P_{A_1} x^{(j-1)}|$ and observed that for some initial points $x^{(0)}$, the sequence appears to converge to an {\em orbit} with $|x^{(j)} - x^{(j-1)}| \to \mbox{constant}$,
but the shadows converge:  $|P_{A_1} x^{(j)} - P_{A_1} x^{(j-1)}| \to 0$ as $j \to \infty$. This does not contradict our analysis.  In the
next section we take a closer look at the shadow sequence.

\subsection{Analysis of the shadows - affine approximations}\label{s:shadows}
Proposition \ref{prop:shadow.fixed.point.subset.CP} shows that in the affine case, the shadows of the CDR$\lambda$ fixed points on the set $A_1$ (i.e. the projections) exactly coincide with fixed points of the cyclic projection mapping.  The fixed points of the cyclic projections mapping are more easily interpreted as local gaps between successive sets in the limiting projection cycle \eqref{eq:sum.gap}.  In the non-affine case this will not hold, but it provides a heuristic for the common practice in the literature of displaying not the final iterate of the Douglas-Rachford-type algorithm, but rather the projection of this point onto one of the sets to ``clean up'' the final iterate.  In this section we put this heuristic on solid theoretical grounds.

The goal of this section is to show that the projection mappings upon which the cyclic relaxed Douglas-Rachford
mapping is built, can be approximated locally by the projection onto the tangent spaces to the sets.  This
approximation requires the Shapiro property \eqref{e:Shapiro+}.  To link quantitatively
the regularity of sets to the corresponding regularity of this projectors
requires the calculus of a$\alpha$-fne mappings as developed in \cite{russell2018quantitative}.

To begin, let $\mathbb S$ (resp. $\mathbb B$) denote the closed unit sphere (resp. ball).
Denote by $B(a,r)$ the open ball centered at $a\in\mathbb E$ with radius $r>0$.
We say that a set-valued mapping $T:\mathbb E\rightrightarrows \mathbb E$ is approximated pointwise by a set-value mapping $\Phi:\mathbb E\rightrightarrows \mathbb E$ on a subset $U\subset \mathbb E$ with error $\varepsilon>0$ if 
\begin{equation}
\arraycolsep=1.4pt\def\arraystretch{.5}
\sup_{\begin{array}{cc}
\scriptstyle x\in U\\
\scriptstyle y\in Tx\\
\scriptstyle z\in \Phi x
\end{array}} \|y-z\|\le \varepsilon.
\end{equation}
Equivalently, for every $x\in U$, for all $y\in Tx$, for all $z\in \Phi x$, $y-z=\varepsilon v$ for some $v\in \mathbb B$.

The following lemma states that the composition of two operators that are approximated by affine operators is an operator that is approximated by an affine operator.
\begin{lemma}\label{lem:composition.affine.2}
Let $T_i:\mathbb E\rightrightarrows \mathbb E$ be an operator that is approximated by an affine operator $\Phi_i:\mathbb E\to \mathbb E$ on a set $U_i\subset \mathbb E$ with error $\varepsilon_i>0$,  $i=1,2$.
Assume that $T_2U_2\subseteq U_1$.
Then $T_1\circ T_2$ is approximated by the affine operator $\Phi_{1,2}$ on $U_2$ with
\begin{equation}
\Phi_{1,2}=(1-\varepsilon_2)\Phi_1(\frac{1}{1-\varepsilon_2}\Phi_2)
\end{equation}
and error
\begin{equation}
\left\|T_1\circ T_2x- \Phi_{1,2}x \right\|\leq
\varepsilon_{1,2}:=\varepsilon_1+\varepsilon_2 \sup_{w\in\mathbb B}{\|\Phi_1w\|}\quad \forall x\in U_2.
\end{equation}
\end{lemma}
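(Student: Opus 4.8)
The plan is to reduce everything to the affine structure of $\Phi_1$ together with a single unfolding of the composition. First I would write $\Phi_1 = L_1 + b_1$, where $L_1$ denotes the linear part and $b_1 = \Phi_1(0)$ the translation, and record the elementary identity
\[
(1-\varepsilon_2)\,\Phi_1\paren{\tfrac{1}{1-\varepsilon_2}\Phi_2 x} = L_1\Phi_2 x + (1-\varepsilon_2)b_1 = \Phi_1(\Phi_2 x) - \varepsilon_2\,\Phi_1(0),
\]
valid since $\varepsilon_2<1$ in our setting. This shows at once that $\Phi_{1,2}$ is affine and is nothing but $\Phi_1\circ\Phi_2$ corrected by the constant $-\varepsilon_2\Phi_1(0)$. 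That identity is the crux: the correction term is precisely what will absorb the stray translation generated when the non-homogeneous map $\Phi_1$ is evaluated at an approximate argument.

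Next, fix $x\in U_2$ and $y\in(T_1\circ T_2)x$. By definition of the composition there is $w\in T_2 x$ with $y\in T_1 w$. Because $x\in U_2$ and $T_2$ is approximated by $\Phi_2$ on $U_2$ with error $\varepsilon_2$, I may write $w = \Phi_2 x + \varepsilon_2 v$ with $v$ in the unit ball. The hypothesis $T_2 U_2\subseteq U_1$ gives $w\in U_1$, so the approximation of $T_1$ by $\Phi_1$ on $U_1$ with error $\varepsilon_1$ yields $y = \Phi_1 w + \varepsilon_1 u$ with $u$ in the unit ball.

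Then one simply computes, using linearity of $L_1$ to get $\Phi_1 w = \Phi_1(\Phi_2 x) + \varepsilon_2 L_1 v$:
\[
y - \Phi_{1,2}x = \Phi_1(\Phi_2 x) + \varepsilon_2 L_1 v + \varepsilon_1 u - \paren{\Phi_1(\Phi_2 x) - \varepsilon_2\Phi_1(0)} = \varepsilon_2\paren{L_1 v + \Phi_1(0)} + \varepsilon_1 u = \varepsilon_2\,\Phi_1 v + \varepsilon_1 u.
\]
Taking norms and using $\|u\|\le 1$, $\|v\|\le 1$ gives $\|y-\Phi_{1,2}x\|\le \varepsilon_1 + \varepsilon_2\sup_{w\in\mathbb B}\|\Phi_1 w\| = \varepsilon_{1,2}$, and since $\Phi_{1,2}$ is single-valued this bound is exactly what the definition of pointwise approximation on $U_2$ requires (if $(T_1\circ T_2)x=\emptyset$ for some $x\in U_2$ the estimate holds vacuously there). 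This proves the lemma.

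I do not expect a genuine obstacle here; the one point to handle with care is precisely the non-homogeneity of $\Phi_1$ — one cannot distribute $\Phi_1$ across the sum $\Phi_2 x + \varepsilon_2 v$, and the bookkeeping of the translation $\Phi_1(0)$ is what forces the specific form of $\Phi_{1,2}$ and makes $\sup_{w\in\mathbb B}\|\Phi_1 w\|$ (rather than the operator norm of the linear part $L_1$) the natural constant in the error. Finiteness of that supremum is immediate, $\Phi_1$ being affine hence continuous and the unit ball bounded, so the entire content of the proof sits in the algebraic identity of the first step.
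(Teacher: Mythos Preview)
Your proof is correct and follows essentially the same route as the paper's: both unfold the composition, write $w=\Phi_2 x+\varepsilon_2 v$, apply the $\Phi_1$-approximation at $w\in U_1$, and use the affine structure of $\Phi_1$ to separate out the term $\varepsilon_2\Phi_1 v$. The only cosmetic difference is that the paper exploits directly that affine maps preserve affine combinations (writing $\Phi_2 x+\varepsilon_2 v_2=(1-\varepsilon_2)\tfrac{1}{1-\varepsilon_2}\Phi_2 x+\varepsilon_2 v_2$ and applying $\Phi_1$), whereas you make the decomposition $\Phi_1=L_1+b_1$ explicit and track the constant $b_1=\Phi_1(0)$ by hand; the two computations are line-for-line equivalent.
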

\begin{proof}
Let $x\in U_2$.
Let $y\in T_2x$.
By assumption, $y=\Phi_2 x +\varepsilon_2 v_2$ for some $v_2\in\mathbb B$.
Since $T_2U_2\subseteq U_1$, we get $y\in U_1$.
Let $z\in T_1y$.
By assumption, there exists $v_1\in \mathbb B$ such that
\begin{equation}
\begin{array}{rl}
z=&\Phi_1 y+\varepsilon_1 v_1\\
=&\Phi_1 (\Phi_2 x +\varepsilon_2 v_2)+\varepsilon_1 v_1\\
=&\Phi_1 ((1-\varepsilon_2)\frac{1}{1-\varepsilon_2}\Phi_2 x +\varepsilon_2 v_2)+\varepsilon_1 v_1\\
=&(1-\varepsilon_2)\Phi_1 (\frac{1}{1-\varepsilon_2}\Phi_2 x) +\varepsilon_2 \Phi_1v_2+\varepsilon_1 v_1\\
=&\Phi_{1,2} x +\varepsilon_{1,2} v\,,
\end{array}
\end{equation}
where $v=\frac{1}{\varepsilon_{1,2}}(\varepsilon_2 \Phi_1v_2+\varepsilon_1 v_1)$.
Note that $\|v\|\le 1$.
Hence the result follows.
\end{proof}

The following lemma extends the result of Lemma \ref{lem:composition.affine.2} to the case of more than two operators.
\begin{lemma}\label{lem:composition.affine.m}
Let $T_i:\mathbb E\rightrightarrows \mathbb E$ be operator that is approximated by an affine operator $\Phi_i:\mathbb E\to \mathbb E$ and error $\varepsilon_i>0$ on a set $U_i\subset \mathbb E$, $i=1,2,\dots,m$.
Assume that $T_{i+1}U_{i+1}\subseteq U_i$, $i=1,\dots,m-1$.
For $i=1,\dots,m-1$, define
\begin{subequations}
\begin{equation}\label{eq:phi.1.to.i}
\Phi_{1,\dots,i+1}\equiv(1-\varepsilon_{i+1})\Phi_{1,\dots,i}(\frac{1}{1-\varepsilon_{i+1}}\Phi_{i+1}\cdot)\,,
\end{equation}
with
\begin{equation}\label{eq:esp.1.to.i}
\varepsilon_{1,\dots,i+1}\equiv\varepsilon_{1,\dots,i}+\varepsilon_{i+1} \sup_{w\in\mathbb B}{\|\Phi_{1,\dots,i}w\|}\,.
\end{equation}
\end{subequations}
Then $T_1\circ T_2 \circ \dots \circ T_m$ is approximated by affine operator $\Phi_{1,\dots,m}$ with error $\varepsilon_{1,\dots,m}$ on $U_m$.
\end{lemma}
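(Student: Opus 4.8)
The plan is to induct on the length of the composition, with Lemma \ref{lem:composition.affine.2} serving as the single-step engine. For $i=1,\dots,m$ write $S_i:=T_1\circ T_2\circ\dots\circ T_i$, and prove the claim that $S_i$ is approximated by the affine operator $\Phi_{1,\dots,i}$ on $U_i$ with error $\varepsilon_{1,\dots,i}$; the case $i=m$ is precisely the assertion of the lemma. The base case $i=1$ is nothing but the hypothesis on $T_1$, since $S_1=T_1$, $\Phi_{1,\dots,1}=\Phi_1$, and $\varepsilon_{1,\dots,1}=\varepsilon_1$.

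For the inductive step, assume the claim for some $i<m$. Since $S_{i+1}=S_i\circ T_{i+1}$, the data in hand are: $S_i$ is approximated by the affine operator $\Phi_{1,\dots,i}$ on $U_i$ with error $\varepsilon_{1,\dots,i}$ (the induction hypothesis); $T_{i+1}$ is approximated by the affine operator $\Phi_{i+1}$ on $U_{i+1}$ with error $\varepsilon_{i+1}$ (hypothesis of the lemma); and $T_{i+1}U_{i+1}\subseteq U_i$ (hypothesis of the lemma). These are exactly the hypotheses of Lemma \ref{lem:composition.affine.2} with its ``outer'' operator $T_1$ taken to be $S_i$ and its ``inner'' operator $T_2$ taken to be $T_{i+1}$. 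That lemma then yields that $S_{i+1}$ is approximated on $U_{i+1}$ by
\[
(1-\varepsilon_{i+1})\,\Phi_{1,\dots,i}\paren{\tfrac{1}{1-\varepsilon_{i+1}}\Phi_{i+1}\,\cdot\,}=\Phi_{1,\dots,i+1}
\]
with error $\varepsilon_{1,\dots,i}+\varepsilon_{i+1}\sup_{w\in\mathbb B}\|\Phi_{1,\dots,i}w\|=\varepsilon_{1,\dots,i+1}$, which are exactly the operator and error given by the recursions \eqref{eq:phi.1.to.i} and \eqref{eq:esp.1.to.i}. This closes the induction.

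Two small points must be verified for the argument to run, neither of which is a genuine obstacle. First, to invoke Lemma \ref{lem:composition.affine.2} at every stage one needs $\Phi_{1,\dots,i}$ to be affine; this follows by the same induction, since precomposing the affine $\Phi_{1,\dots,i}$ with the affine map $x\mapsto\frac{1}{1-\varepsilon_{i+1}}\Phi_{i+1}x$ and postcomposing with the linear scaling $x\mapsto(1-\varepsilon_{i+1})x$ again produces an affine map (and, in particular, $\sup_{w\in\mathbb B}\|\Phi_{1,\dots,i}w\|<\infty$, since the affine image of the bounded set $\mathbb B$ is bounded, so $\varepsilon_{1,\dots,i+1}$ is well defined). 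Second, the recursion tacitly requires $\varepsilon_{i+1}\neq 1$ for $i=1,\dots,m-1$ so that the scaling factor $\frac{1}{1-\varepsilon_{i+1}}$ makes sense — the same restriction already implicit in Lemma \ref{lem:composition.affine.2}. The only real care needed is the index bookkeeping in matching the $m$-fold composition here to the two-operator statement of Lemma \ref{lem:composition.affine.2}; there is no analytic difficulty.
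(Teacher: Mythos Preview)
Your proposal is correct and follows essentially the same route as the paper: induction on the length of the composition, with Lemma~\ref{lem:composition.affine.2} supplying the inductive step by treating the already-composed block as the outer operator and the next $T_{i+1}$ as the inner one. The paper simply starts the induction at $m=2$ rather than $i=1$ and omits your explicit checks that $\Phi_{1,\dots,i}$ is affine and that $\varepsilon_{i+1}\neq 1$; otherwise the arguments coincide.
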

\begin{proof}
    We prove the result by induction for $m=2,3,\dots$.
    The result with $m=2$  follows directly from Lemma \ref{lem:composition.affine.2}.
    Assume that the result holds with $m=i$, i.e., $T_1\circ \dots\circ T_i$ is approximated by affine operator $\Phi_{1,\dots,i}$ with error $\varepsilon_{1,\dots,i}$ on $U_i$.
    We will show that the result holds with $m=i+1$.
    Using Lemma \ref{lem:composition.affine.2} with $T_1\leftarrow T_1\circ \dots\circ T_i$, $\Phi_1\leftarrow \Phi_{1,\dots,i}$, $\varepsilon_1\leftarrow \varepsilon_{1,\dots,i}$, $T_2\leftarrow T_{i+1}$, $\Phi_2\leftarrow \Phi_{i+1}$, $\varepsilon_2\leftarrow \varepsilon_{i+1}$, we get $T_1\circ \dots\circ T_{i+1}=(T_1\circ \dots\circ T_i)\circ T_{i+1}$ is approximated by affine approximation $\Phi_{1,\dots,i+1}$ and error $\varepsilon_{i+1}$ on $U_{i+1}$, defined as in \eqref{eq:phi.1.to.i} and \eqref{eq:esp.1.to.i}, respectively. 
    Hence the proof is completed.
\end{proof}

Given the operator $T:\mathbb E\rightrightarrows \mathbb E$ and constant $\varepsilon>0$, denote
\begin{equation}
\Fixepsilon T\equiv\{y\in\mathbb E\,:\, y\in Ty +\varepsilon \mathbb B\}.
\end{equation}
We call $\Fixepsilon T$ the set of {\em almost} fixed points of $T$ with error $\varepsilon$.
If $\varepsilon_1>\varepsilon_2>0$, then $\Fix T=\mathrm{Fix}_{0}T\subset \mathrm{Fix}_{\varepsilon_2}T\subset\mathrm{Fix}_{\varepsilon_1}T$.

The following lemma states that if projections onto the sets in the feasibility model is approximated by affine operators in certain neighborhoods, the shadows of fixed points of the cyclic relaxed Douglas-Rachford operator are almost fixed points of the cyclic projection operator.
\begin{lemma}\label{lem:shadow.fixed.point.subset.CP.noise}
Let $A_j$ be nonempty closed subsets of $\mathbb E$, $j=1,\dots,m$.
Denote $A_{m+1}:=A_1$.
Let $\lambda\in[0,1]$.
Set $T_{i,\dots,m}\equiv T_{i,i+1}\circ T_{i+1,i+2}\circ\dots\circ T_{m,m+1}$.
Let $U$ be a subset of $\mathbb E$. 
Assume that the following conditions hold:
\begin{enumerate}[(a)]
    \item $\Fix T_{CDR\lambda}\cap U\neq\emptyset$ and  $T_{i,i+1}$ is single-valued on $T_{i+1,\dots,m}(\Fix T_{CDR\lambda}\cap U)$, $i=1,\dots,m$.
    \item $P_{A_i}$ is approximated by an affine operator $\Phi_i:\mathbb E\to \mathbb E$ with error $\varepsilon_i>0$ on a set $U_i\subset \mathbb E$, $i=1,2,\dots,m$.
    \item For $i=1,\dots,m-1$, $P_{A_{i+1}}U_{i+1}\subseteq U_i$, and for $i=1,\dots,m$,
\begin{equation}\label{eq:subset.Ui}
\begin{array}{rl}
&T_{i,\dots,m}(\Fix T_{CDR\lambda}\cap U)\subseteq U_i,\\
&P_{A_i}R_{A_{i+1}}T_{i+1,\dots,m}(\Fix T_{CDR\lambda}\cap U)\subseteq U_i,\\
&R_{A_{i+1}}T_{i+1,\dots,m}(\Fix T_{CDR\lambda}\cap U)\subseteq U_i,\\
&P_{A_{i+1}}T_{i+1,\dots,m}(\Fix T_{CDR\lambda}\cap U)\subseteq U_i\,.
\end{array}
\end{equation}
\end{enumerate}
Define
\begin{subequations}
\begin{equation}
(i=1,\dots,m-1),\quad \Phi_{1,\dots,i+1}\equiv(1-\varepsilon_{i+1})\Phi_{1,\dots,i}(\frac{1}{1-\varepsilon_{i+1}}\Phi_{i+1})\,,
\end{equation}
\begin{equation}
\varepsilon_{1,\dots,i+1}\equiv\varepsilon_{1,\dots,i}+\varepsilon_{i+1} \sup_{w\in\mathbb B}{\|\Phi_{1,\dots,i}w\|}\,.
\end{equation}
and
\end{subequations}
$T_{\overline{CP}}\equiv P_{A_1}\circ P_{A_2}\circ\dots\circ P_{A_{m}}\circ P_{A_1}$.

Then
\begin{equation}\label{eq:shadow.sunset.CP.noise}
    P_{A_1}(\Fix T_{CDR\lambda}\cap U)\subseteq \Fixepsilon T_{\overline{CP}}
\end{equation}
where $\varepsilon\equiv 4\sum_{i=1}^m \varepsilon_{1,\dots,i}\to 0$ as $\varepsilon_i\to 0$, $i=1,\dots,m$.
\end{lemma}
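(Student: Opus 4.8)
The plan is to reproduce the affine computation underlying Proposition~\ref{prop:shadow.fixed.point.subset.CP}, replacing each exact identity used there by an approximate one obtained \emph{through the composite affine surrogates} $\Phi_{1,\dots,k}$ supplied by Lemma~\ref{lem:composition.affine.m}; routing everything through these surrogates, rather than composing the individual errors $\varepsilon_i$ across the (non‑Lipschitz) projectors $P_{A_i}$, is what keeps the accumulated error equal to $\sum_i\varepsilon_{1,\dots,i}$ instead of a product of operator norms. Concretely, fix $x\in\Fix T_{CDR\lambda}\cap U$. By (a) the orbit $x_{m+1}:=x$, $x_j:=T_{j,j+1}x_{j+1}$ ($j=m,\dots,1$) is well defined, $x_1=x$ (as in \eqref{eq:x1.equal.x}), and $x_j=T_{j,\dots,m}x$ for every $j$. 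Along the orbit, Lemma~\ref{lem:Tij.expression} lets me fix compatible selections $p_j\in P_{A_j}x_j$, $w_j:=2p_j-x_j\in R_{A_j}x_j$ and $q_{j-1}\in P_{A_{j-1}}w_j$ with
\[
 x_{j-1}=\lambda q_{j-1}-\lambda w_j+p_j\qquad(j=m+1,m,\dots,2),
\]
an affine combination because $\lambda-\lambda+1=1$; I also set $\bar x:=p_{m+1}\in P_{A_1}x$ (single‑valued at $x$ by (a)). Writing $g_k:=P_{A_1}\circ P_{A_2}\circ\dots\circ P_{A_k}$, hypotheses (b), (c) and Lemma~\ref{lem:composition.affine.m} make $g_k$ approximated pointwise on $U_k$ by the affine map $\Phi_{1,\dots,k}$ with error $\varepsilon_{1,\dots,k}$, for $k=1,\dots,m$.

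The heart of the proof is the estimate, for every $j\in\{2,\dots,m+1\}$,
\[
 a\in g_{j-1}(x_{j-1}),\ a'\in g_{j-1}(p_j)\ \Longrightarrow\ \|a-a'\|\le 4\,\varepsilon_{1,\dots,j-1}.
\]
To prove it, I first note that the four points $x_{j-1},p_j,q_{j-1},w_j$, rewritten as $T_{j-1,\dots,m}x$, $P_{A_j}T_{j,\dots,m}x$, $P_{A_{j-1}}R_{A_j}T_{j,\dots,m}x$ and $R_{A_j}T_{j,\dots,m}x$, all lie in $U_{j-1}$ by exactly the four families of inclusions \eqref{eq:subset.Ui} at index $i=j-1$; hence $\|a-\Phi_{1,\dots,j-1}(x_{j-1})\|\le\varepsilon_{1,\dots,j-1}$ and $\|a'-\Phi_{1,\dots,j-1}(p_j)\|\le\varepsilon_{1,\dots,j-1}$, and it remains to bound $\|\Phi_{1,\dots,j-1}(x_{j-1})-\Phi_{1,\dots,j-1}(p_j)\|$. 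Since $\Phi_{1,\dots,j-1}$ is affine it preserves the affine combination above, so this difference equals $\lambda\bigl(\Phi_{1,\dots,j-1}(q_{j-1})-\Phi_{1,\dots,j-1}(w_j)\bigr)$. Finally $q_{j-1}\in P_{A_{j-1}}w_j\subseteq A_{j-1}$ forces $P_{A_{j-1}}(q_{j-1})=\{q_{j-1}\}$, whence $\emptyset\neq g_{j-1}(q_{j-1})=(P_{A_1}\cdots P_{A_{j-2}})(q_{j-1})\subseteq (P_{A_1}\cdots P_{A_{j-2}})\bigl(P_{A_{j-1}}w_j\bigr)=g_{j-1}(w_j)$; choosing $c$ in this common nonempty set and comparing it with $\Phi_{1,\dots,j-1}(q_{j-1})$ and with $\Phi_{1,\dots,j-1}(w_j)$ (legitimate because $q_{j-1},w_j\in U_{j-1}$) gives $\|\Phi_{1,\dots,j-1}(q_{j-1})-\Phi_{1,\dots,j-1}(w_j)\|\le 2\varepsilon_{1,\dots,j-1}$. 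Since $\lambda\le 1$, the three bounds combine to the claimed $4\varepsilon_{1,\dots,j-1}$.

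With the estimate in hand I build a chain of points. Put $a_1:=\bar x\in g_1(x_1)=P_{A_1}x$ and, inductively, given $a_j\in g_j(x_j)$ pick any $a_{j+1}\in g_j(p_{j+1})$; then $a_{j+1}\in g_j(p_{j+1})\subseteq g_j(P_{A_{j+1}}x_{j+1})=g_{j+1}(x_{j+1})$, and $\|a_j-a_{j+1}\|\le 4\varepsilon_{1,\dots,j}$ by the estimate at index $j+1$. At the last stage, $a_{m+1}\in g_m(p_{m+1})=g_m(\bar x)=(P_{A_1}\cdots P_{A_m})(\bar x)=T_{\overline{CP}}(\bar x)$ — the last equality because $\bar x\in A_1$, so $P_{A_1}\bar x=\{\bar x\}$ — and the triangle inequality gives $\|\bar x-a_{m+1}\|\le\sum_{i=1}^m 4\varepsilon_{1,\dots,i}=\varepsilon$. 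Hence $\bar x\in T_{\overline{CP}}(\bar x)+\varepsilon\mathbb B$, i.e. $P_{A_1}x=\bar x\in\Fixepsilon T_{\overline{CP}}$, which is \eqref{eq:shadow.sunset.CP.noise}; and $\varepsilon=4\sum_{i=1}^m\varepsilon_{1,\dots,i}\to 0$ as each $\varepsilon_i\to 0$ by \eqref{eq:esp.1.to.i}.

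I expect the main obstacle to be the displayed estimate itself: bounding how far the non‑Lipschitz composite projector $g_{j-1}$ can move between $x_{j-1}$ and $p_j=P_{A_j}x_j$, even though these two points are generally \emph{not} close. The device that resolves it is to pass to the affine surrogate $\Phi_{1,\dots,j-1}$, use that it commutes with affine combinations so that the excess term $\lambda(q_{j-1}-w_j)$ is isolated, and then invoke the idempotence $P_{A_{j-1}}\circ P_{A_{j-1}}=P_{A_{j-1}}$ to absorb that term into two surrogate errors. Everything else is bookkeeping: matching each of $x_{j-1},p_j,q_{j-1},w_j$ to the neighbourhood $U_{j-1}$ — which is precisely why \eqref{eq:subset.Ui} carries four separate families of inclusions — and the minor point that (a) makes $P_{A_1}$ single‑valued on $\Fix T_{CDR\lambda}\cap U$, so that the shadow $\bar x$ is unambiguous.
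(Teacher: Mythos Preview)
Your proof is correct and follows essentially the same approach as the paper: both arguments express $x_{j-1}$ via Lemma~\ref{lem:Tij.expression} as the affine combination $\lambda q_{j-1}-\lambda w_j+p_j$, invoke Lemma~\ref{lem:composition.affine.m} so that $P_{A_1}\cdots P_{A_{j-1}}$ is approximated by the affine surrogate $\Phi_{1,\dots,j-1}$ on $U_{j-1}$, pass through $\Phi_{1,\dots,j-1}$ to distribute over the affine combination, and use the idempotence $P_{A_{j-1}}\circ P_{A_{j-1}}=P_{A_{j-1}}$ to cancel the $\lambda$-terms, leaving four error contributions of size $\varepsilon_{1,\dots,j-1}$ that telescope to $4\sum_i\varepsilon_{1,\dots,i}$. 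Your presentation tracks set-valued selections a bit more explicitly than the paper's, but the logic and the constants are identical.
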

\begin{proof}
Let $x\in\Fix T_{CDR\lambda}\cap U$.
Set $x_{m+1}=x$. Set $x_i=T_{i,i+1}x_{i+1}$, for all $i=m,\dots,1$. Using \eqref{eq:x1.equal.x}, we get $x_1=x$.
We claim that
\begin{equation}\label{eq:loop.proj}
\forall i=1,\dots,m\,,\, \exists v_i\in \mathbb B\,:\,P_{A_1}P_{A_2}\dots P_{A_i}x_i=P_{A_1}P_{A_2}\dots P_{A_i}P_{A_{i+1}}x_{i+1}+4\varepsilon_{1,\dots,i} v_i.
\end{equation}
Indeed, let $i\in\{1,\dots,m\}$.
By Lemma \ref{lem:Tij.expression}, we have
\begin{equation}\label{e:Gibbons}
x_i=\lambda P_{A_i}R_{A_{i+1}}x_{i+1}-\lambda R_{A_{i+1}}x_{i+1}+P_{A_{i+1}}x_{i+1}
\end{equation}
(see \eqref{eq:rep.x}).
Since $x_i=T_{i,\dots,m}x$, by \eqref{eq:subset.Ui}, this implies that
$x_i$, $P_{A_i}R_{A_{i+1}}x_{i+1}$, $R_{A_{i+1}}x_{i+1}$, and $P_{A_{i+1}}x_{i+1}$
all belong to $U_i$.
By Lemma \ref{lem:composition.affine.m}, $P_{A_1} \dots P_{A_i}$ is approximated by the affine operator $\Phi_{1,\dots,i}$ with error $\varepsilon_{1,\dots,i}$ on $U_i$.
Then there exists $u_j\in \mathbb B$, $j=1,\dots,4$, such that 
\begin{equation}
\begin{array}{rl}
&P_{A_1} \dots P_{A_i}x_i= \Phi_{1,\dots,i} x_i +\varepsilon_{1,\dots,i} u_1,\\
& P_{A_1} \dots P_{A_i} P_{A_i}R_{A_{i+1}}x_{i+1}=\Phi_{1,\dots,i} P_{A_i}R_{A_{i+1}}x_{i+1}+\varepsilon_{1,\dots,i} u_{2},\\
&P_{A_1} \dots P_{A_i} R_{A_{i+1}}x_{i+1}=\Phi_{1,\dots,i} R_{A_{i+1}}x_{i+1}+\varepsilon_{1,\dots,i} u_3,\\
&P_{A_1} \dots P_{A_i}P_{A_{i+1}}x_{i+1}=\Phi_{1,\dots,i}P_{A_{i+1}}x_{i+1}+\varepsilon_{1,\dots,i} u_4.
\end{array}
\end{equation}
This together with \eqref{e:Gibbons} yields
\begin{equation}
\begin{array}{rl}
P_{A_1} \dots P_{A_i}x_i=&\Phi_{1,\dots,i} x_i +\varepsilon_{1,\dots,i} u_1\\
=&\Phi_{1,\dots,i} (\lambda P_{A_1}R_{A_{i+1}}x_{i+1}-\lambda R_{A_{i+1}}x_{i+1}+P_{A_{i+1}}x_{i+1}) +\varepsilon_{1,\dots,i} u_1\\
=&\lambda \Phi_{1,\dots,i} P_{A_i}R_{A_{i+1}}x_{i+1}-\lambda \Phi_{1,\dots,i} R_{A_{i+1}}x_{i+1}\\
& \qquad+\,\Phi_{1,\dots,i}P_{A_{i+1}}x_{i+1} +\varepsilon_{1,\dots,i}  u_1\\
=&\lambda (P_{A_1} \dots P_{A_i} P_{A_i}R_{A_{i+1}}x_{i+1}-\varepsilon_{1,\dots,i} u_2)\\
& \qquad-\,\lambda (P_{A_1} \dots P_{A_i} R_{A_{i+1}}x_{i+1}-\varepsilon_{1,\dots,i} u_3)\\
&\qquad+\, (P_{A_1} \dots P_{A_i}P_{A_{i+1}}x_{i+1}-\varepsilon_{1,\dots,i} u_4) +\varepsilon_{1,\dots,i}  u_1\\
=&P_{A_1} \dots P_{A_i}P_{A_{i+1}}x_{i+1}+\varepsilon_{1,\dots,i}(-\lambda u_2+\lambda u_3-u_4+u_1)\\
=&P_{A_1} \dots P_{A_i}P_{A_{i+1}}x_{i+1} +4\varepsilon_{1,\dots,i} v_i,
\end{array}
\end{equation}
where $v_i=\frac{1}{4}(-\lambda u_2+\lambda u_3-u_4+u_1)$ with
\begin{equation}
\|v_i\|\le \frac{1}{4}(\lambda \|u_2\|+\lambda \|u_3\|+\|u_4\|+\|u_1\|)\le \frac{1}{4}(1+1+1+1)=1\,.
\end{equation}
Since $x=x_1$, by \eqref{eq:loop.proj}, we get
\begin{equation}
    \begin{array}{rl}
         P_{A_1}x=P_{A_1}x_1=& P_{A_1}P_{A_2}x_2+4\varepsilon_1 v_1\\ 
         =&P_{A_1}P_{A_2}P_{A_3}x_3+4\varepsilon_{1,2} v_2+4\varepsilon_1 v_1\\
         =& \dots\\
        =&P_{A_1}\dots P_{A_m}P_{A_{m+1}}x_{m+1}+4\varepsilon_{1,\dots,m} v_m+\dots+4\varepsilon_1 v_1\\
        =&T_{\overline{CP}}x +\varepsilon v = T_{\overline{CP}}P_{A_1}x + \varepsilon v,
    \end{array}
\end{equation}
where $v=\frac{4}{\varepsilon}\sum_{i=1}^m \varepsilon_{1,\dots,i} v_i$.
Note that 
\begin{equation}
\|v\|\le \frac{4}{\varepsilon}\sum_{i=1}^m \varepsilon_{1,\dots,i} \|v_i\|\le \frac{4}{\varepsilon} \sum_{i=1}^m \varepsilon_{1,\dots,i}=1.
\end{equation}
Thus $P_{A_1}x\in\Fixepsilon T_{\overline{CP}}$, as claimed.
\end{proof}

The following lemma provides sufficient conditions on a set allowing approximation of its projector by the
projection onto its translated tangent space.
\begin{lemma}\label{lem:suff.cond.approxi}
Let $A$ be a subset of $\Ebb$, and let $\bar x\in A$.
Assume that the following conditions hold:
\begin{enumerate}[(a)]
    \item $A$ is $\epsilon$-super-regular  at $\bar x$ on the neighborhood $W$ of $\bar x$ with $\epsilon_W>0$,
    that is $A$ satisfies \eqref{eq:ele.subreg} at $\bar x$ on $W$ with constant $\epsilon_W$.
    \item $A$ has Shapiro$^+$ property \eqref{e:Shapiro+} at $\bar x$ on a neighborhood $V$ of $\bar x$ with constant $k>0$.
\end{enumerate}
\medskip

\noindent Set $\hat{\epsilon}:= 4\epsilon_W/(1-\epsilon_W)^2.$ 
 Let $\bar\epsilon>0$ and define $U:=W\cap V\cap B(\bar x,r)$, where $r=\frac{\bar\epsilon}{\sqrt{1+\hat{\epsilon}}+k+1}$.
Then $P_A$ is approximated by the projection onto the translated tangent space, $P_{\mathcal{T}_{A}(\bar x)+\bar x}$,
with error $\bar\epsilon$ on $U$.
\end{lemma}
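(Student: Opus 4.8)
The plan is to establish the bound pointwise. Fix an arbitrary $x\in U$, $y\in P_A(x)$ and $z\in P_{\mathcal{T}_A(\bar x)+\bar x}(x)$; it suffices to show $\|y-z\|<\bar\epsilon$, since then $y-z=\bar\epsilon v$ with $v\in\B$, which is exactly the definition of $P_A$ being approximated by $P_{\mathcal{T}_A(\bar x)+\bar x}$ on $U$ with error $\bar\epsilon$. The idea is to compare both $y$ and $z$ to the single point $\bar x$, which is admissible precisely because $\bar x$ lies in both of the sets being projected onto: $\bar x\in A$ gives $P_A(\bar x)=\{\bar x\}$, and $0\in\mathcal{T}_A(\bar x)$ gives $\bar x\in\mathcal{T}_A(\bar x)+\bar x$. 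Thus I will estimate $\|y-\bar x\|$ and $\|z-\bar x\|$ separately, each by a multiple of $\|x-\bar x\|$, and combine via the triangle inequality.

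For the first estimate I would use hypothesis (a) together with Lemma \ref{lem:nonexp.refector.projector}(i), applied to $\Omega=A$ on the neighborhood $W$ with $\Lambda:=P_A^{-1}(\bar x)\cap W$; note $\bar x\in\Lambda$ since $\bar x\in P_A(\bar x)$. In the admissible range $\epsilon_W\in[0,1)$ the conclusion is that $P_A$ is pointwise almost nonexpansive at $\bar x$ on $W$ with violation $\hat\epsilon=4\epsilon_W/(1-\epsilon_W)^2$; evaluating this inequality at the pair $(x,\bar x)$ — legitimate because $x\in U\subseteq W$ — and using $P_A(\bar x)=\{\bar x\}$ yields $\|y-\bar x\|\le\sqrt{1+\hat\epsilon}\,\|x-\bar x\|$.

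For the second estimate I would invoke the Shapiro$^+$ property \eqref{e:Shapiro+} of $A$ at $\bar x$ on $V$, with base point $\bar x\in A\cap V$ and the point $x\in U\subseteq V$: this gives $d_{\mathcal{T}_A(\bar x)}(x-\bar x)\le k\|x-\bar x\|$. Since a translation yields $d_{\mathcal{T}_A(\bar x)+\bar x}(x)=d_{\mathcal{T}_A(\bar x)}(x-\bar x)$ and $z$ realizes this distance, $\|z-x\|\le k\|x-\bar x\|$, hence $\|z-\bar x\|\le\|z-x\|+\|x-\bar x\|\le(k+1)\|x-\bar x\|$. Combining,
\[
\|y-z\|\le\|y-\bar x\|+\|\bar x-z\|\le\bigl(\sqrt{1+\hat\epsilon}+k+1\bigr)\|x-\bar x\|<\bigl(\sqrt{1+\hat\epsilon}+k+1\bigr)r=\bar\epsilon,
\]
using $x\in B(\bar x,r)$ and the definition of $r$. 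As $x,y,z$ were arbitrary, this is the claim.

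I do not anticipate a real difficulty: the only points requiring care are (i) matching the super-regularity in hypothesis (a) to the precise form required by Lemma \ref{lem:nonexp.refector.projector} — in particular that $\bar x\in\Lambda$, so that the lemma may be applied \emph{at} $\bar x$, and that $\epsilon_W<1$ so that $\hat\epsilon$ is admissible — and (ii) the elementary translation identity for the tangent-cone distance together with the bookkeeping that $U=W\cap V\cap B(\bar x,r)$ simultaneously places $x$ in both neighborhoods on which (a) and (b) hold and inside the ball that makes the estimate quantitative. Neither is more than routine.
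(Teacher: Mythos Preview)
Your proposal is correct and follows essentially the same approach as the paper: both bound $\|y-\bar x\|$ via the pointwise almost-nonexpansiveness of $P_A$ at $\bar x$ from Lemma~\ref{lem:nonexp.refector.projector}, bound $\|x-z\|=d_{\mathcal T_A(\bar x)}(x-\bar x)$ via the Shapiro$^+$ property, and combine through the triangle inequality to reach $(\sqrt{1+\hat\epsilon}+k+1)\|x-\bar x\|\le\bar\epsilon$. The only cosmetic difference is the grouping: the paper writes $\|y-z\|\le\|y-\bar x\|+\|x-z\|+\|x-\bar x\|$ directly, whereas you first pass through $\|z-\bar x\|\le\|z-x\|+\|x-\bar x\|$ and then $\|y-z\|\le\|y-\bar x\|+\|\bar x-z\|$; the arithmetic is identical.
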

\begin{proof}
Note that $P_A \bar x=\bar x$.
By Lemma \ref{lem:nonexp.refector.projector} and condition (a), $P_A$ is pointwise almost
nonexpansive at $\bar x$ on  $W$ with violation $\hat{\epsilon}$, i.e.,
\begin{equation}\label{eq:nonexpansive.PC}
    \|x^+-\bar x\|\le \sqrt{1+\hat{\epsilon}}\|x-\bar x\|\,,\,\forall x\in W\,,\,x^+\in P_A x\,.
\end{equation}
By condition (b), we get
\begin{equation}\label{eq:Shapiro.prop}
    d_{\mathcal{T}_{A}(\bar x)}(x-\bar x)\leq k\|x-\bar x\|,\ \forall x\in V.
\end{equation}
Let $x\in U$. Let $x^+\in P_A x$ and $z\in P_{\mathcal{T}_{A}(\bar x)+\bar x}x$.
We have
\begin{equation}
    \begin{array}{rl}
         \|x^+-z\|=&\|(x^+-\bar x) +[(x-\bar x)+(\bar x- z)]-(x-\bar x)\|  \\[5pt]
         \leq&\|x^+-\bar x\|+\|(x-\bar x)+(\bar x- z)\|+\|x-\bar x\|\\[5pt]
         \leq & \sqrt{1+\hat{\epsilon}}\|x-\bar x\|+\|(x-\bar x)-(z-\bar x)\|+\|x-\bar x\|\\[5pt]
         =&\sqrt{1+\hat{\epsilon}}\|x-\bar x\|+d_{\mathcal{T}_{A}(\bar x)}(x-\bar x)+\|x-\bar x\|\\[5pt]
         \leq &\sqrt{1+\hat{\epsilon}}\|x-\bar x\|+k\|x-\bar x\|+\|x-\bar x\|\\[5pt]
         =&(\sqrt{1+\hat{\epsilon}}+k+1)\|x-\bar x\|\le (\sqrt{1+\hat{\epsilon}}+k+1)r=\bar\epsilon.
    \end{array}
\end{equation}
The second inequality follows from \eqref{eq:nonexpansive.PC}.
The second equality relies on $z-\bar x\in P_{\mathcal{T}_{A}(\bar x)+\bar x}x-\bar x= P_{\mathcal{T}_{A}(\bar x)}(x-\bar x)$.
The third inequality is a consequence of \eqref{eq:Shapiro.prop}.
Hence the result follows.
\end{proof}

\begin{proposition}[affine sets]\label{t:T_S line-Shapiro+}
If a set $S\subset\Ebb$ has a tangent cone $\mathcal T_S(\bar x)$ that is a subspace, then $S$ has the
Shapiro$^+$ property at $\bar x$ with constant
\begin{equation}\label{eq:Shapiro number}
 k=\|\mathcal A^\top (\mathcal A\mathcal A^\top)^{-1}\mathcal A\|,
\end{equation}
where $\mathcal A$ is the matrix characterizing $\mathcal T_S(\bar x)$.
\end{proposition}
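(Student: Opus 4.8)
The plan is to reduce the claim to elementary linear algebra about the subspace $L:=\mathcal T_S(\bar x)$, invoking only the definition \eqref{e:Shapiro+} together with the fact that the contingent cone of a set at a point of the set always contains the origin. First I would fix a full-row-rank matrix $\mathcal A$ with $L=\{v\in\mathbb E:\mathcal A v=0\}$; such an $\mathcal A$ exists because $L$ is a linear subspace (its rows may be chosen to form a basis of $L^{\perp}$), and one may always reduce to this case, so that $\mathcal A\mathcal A^{\top}$ is invertible and $k=\|\mathcal A^{\top}(\mathcal A\mathcal A^{\top})^{-1}\mathcal A\|$ is well defined. (When $L=\mathbb E$ the property is trivially true with any $k>0$, so assume $L$ is a proper subspace, i.e.\ $\mathcal A\neq 0$.) Next I would observe that $\Pi:=\mathcal A^{\top}(\mathcal A\mathcal A^{\top})^{-1}\mathcal A$ is the orthogonal projector onto $L^{\perp}=\mathrm{range}(\mathcal A^{\top})$, hence $\Id-\Pi=P_L$, and therefore for every $w\in\mathbb E$
\[
 d_{L}(w)=\|w-P_Lw\|=\|\Pi w\|\le\|\Pi\|\,\|w\|=k\,\|w\|,
\]
which is precisely \eqref{e:Shapiro+} in the case $y=\bar x$, with no restriction on $w$.

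It remains to handle $\mathcal T_S(y)$ for the other $y\in S$ near $\bar x$. In the affine setting relevant to Section~\ref{s:shadows} this is immediate: if $S$ is affine then $\mathcal T_S(y)=L$ for every $y\in S$ (the contingent cone of an affine subspace at any of its points is the parallel linear subspace), so the displayed estimate applies verbatim with $w=x-y$, and \eqref{e:Shapiro+} holds with $V=\mathbb E$. For a general set $S$ whose tangent cone at $\bar x$ merely happens to be a subspace, I would instead use that $\mathcal T_S(y)$ contains $0$ for every $y\in S$, so that $d_{\mathcal T_S(y)}(x-y)\le\|x-y\|$; since $\Pi$ is a nonzero orthogonal projector, $k=\|\Pi\|=1$, and hence $d_{\mathcal T_S(y)}(x-y)\le k\|x-y\|$ for all $x$ and all $y\in S$. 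In either case \eqref{e:Shapiro+} holds with $V=\mathbb E$.

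The only substantive point — and the closest thing to an obstacle here — is the algebraic identification of the constant: one must justify that $\mathcal A$ may be taken with full row rank (so that $(\mathcal A\mathcal A^{\top})^{-1}$ makes sense, or else read it as a pseudoinverse), recognize $\mathcal A^{\top}(\mathcal A\mathcal A^{\top})^{-1}\mathcal A$ as the orthogonal projector onto $L^{\perp}$ and hence as the operator realizing the distance formula $d_L(w)=\|P_{L^{\perp}}w\|$, and note that this already forces $k=1$ when $L\subsetneq\mathbb E$. After that the proof uses nothing beyond the standard distance-to-a-subspace formula and the trivial bound coming from $0\in\mathcal T_S(y)$; no analytic estimates and no regularity of $S$ beyond the hypothesis on $\mathcal T_S(\bar x)$ enter.
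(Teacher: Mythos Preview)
Your core argument---identifying $\Pi=\mathcal A^{\top}(\mathcal A\mathcal A^{\top})^{-1}\mathcal A$ as the orthogonal projector onto $L^{\perp}$ and reading off $d_L(w)=\|\Pi w\|\le k\|w\|$---is exactly what the paper does, with the same full-row-rank choice of $\mathcal A$ and the same projection formula $P_L z = z-\Pi z$.

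Where you differ is that the paper's proof stops after checking the estimate at the single base point $y=\bar x$ (and that is indeed the only instance of \eqref{e:Shapiro+} invoked later, in Lemma~\ref{lem:suff.cond.approxi}), whereas you go on to treat the full quantifier $\forall y\in S\cap V$ appearing in the definition \eqref{e:Shapiro+}. Your handling of this---observing that $\|\Pi\|=1$ whenever $L\subsetneq\Ebb$, so that the trivial bound $d_{\mathcal T_S(y)}(x-y)\le\|x-y\|$ coming from $0\in\mathcal T_S(y)$ already matches the stated constant---is correct and in fact shows that the formula \eqref{eq:Shapiro number} always returns $k=1$. So your proof is strictly more complete than the paper's, and the extra observation that $k=1$ is a useful simplification the paper does not record.
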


\begin{proof}
Suppose the dimension of $\Ebb$ is $n$.
Since $S$ has the tangent cone $\mathcal{T}_{S}(\bar x)$ that is a linear subspace of $\Ebb$,
there exists a $m\times n$ matrix,  $\mathcal{A}$,  of rank $m\leq n$ such that
$\mathcal{T}_{S}(\bar x)=\{x\in\Ebb\,|\, \mathcal Ax=0\}$.
Let $z\in \mathbb E$.
Now, we apply the formula for the projector onto the subspace $\mathcal{T}_{S}(\bar x)$, namely
$P_{\mathcal{T}_{S}(\bar x)}z=z-\mathcal A^\top (\mathcal A\mathcal A^\top)^{-1}\mathcal Az$.
This yields
\[
d_{\mathcal T_{S}(\bar x)}(z) =\|z-P_{\mathcal T_{S}(\bar x)}z\|  =\|\mathcal A^\top (\mathcal A\mathcal A^\top)^{-1}\mathcal Az\|\le k\|z\|,
\]
where $k=\|\mathcal A^\top (\mathcal A\mathcal A^\top)^{-1}\mathcal A\|$.
Thus $S$ has Shapiro$^+$ property at $\bar x$ on $\Ebb$ with constant $k$.
\end{proof}

The following theorem says that if the sets are $\epsilon$-super-regular   and have tangent cones that are in
fact linear spaces, then
the shadows of fixed points of the cyclic relaxed Douglas-Rachford operator are almost fixed points
of the cyclic projection operator.
\begin{theorem}\label{theo:shadow.fixed.point.subset.CP.noise}
Let $A_j$ be nonempty closed subsets of $\mathbb E$, $j=1,\dots,m$.
Denote $A_{m+1}:=A_1$.
Let $\lambda\in[0,1]$.
Set $T_{i,\dots,m}=T_{i,i+1}\circ T_{i+1,i+2}\circ\dots\circ T_{m,m+1}$ where $T_{j,j+1}$ is defined as in \eqref{eq:DRl.Ai}.
Let $U$ be a subset of $\mathbb E$. 
Assume that the following conditions hold:
\begin{enumerate}[(a)]
    \item $\Fix T_{CDR\lambda}\cap U\neq\emptyset$ and  $T_{i,i+1}$ is single-valued on $T_{i+1,\dots,m}(\Fix T_{CDR\lambda}\cap U)$, $i=1,\dots,m$.
    \item For $i=1,\dots,m+1$, $A_i$ is $\epsilon$-super-regular  at $\bar x_i\in A_i$ on
    neighborhood $W_i$ of $\bar x_i$ with constant $\epsilon_{W_i}>0$.
    \item For $i=1,\dots,m+1$, the tangent cone to $A_i$ at $\bar x_i$,
    denoted by $\mathcal{T}_{A_i}(\bar x_i)$, is a linear subspace of $\Ebb$.\\~\\
Set $\hat{\epsilon}_i:= 4\epsilon_{W_i}/(1-\epsilon_{W_i})^2.$
For a fixed $\bar\epsilon_i>0$, define $U_i=W_i\cap V_i\cap B(\bar x_i,r_i)$,
where $r_i=\frac{\bar\epsilon_i}{\sqrt{1+\hat{\epsilon}_i}+k_i+1}$
where $k_i$ is the Shapiro$^+$ number  given by \eqref{eq:Shapiro number}.
    \item For $i=1,\dots,m-1$, $P_{A_{i+1}}U_{i+1}
    \subseteq U_i$, and for $i=1,\dots,m$,
\begin{equation}
\begin{array}{rl}
&T_{i,\dots,m}(\Fix T_{CDR\lambda}\cap U)\subseteq U_i,\\[5pt]
&P_{A_i}R_{A_{i+1}}T_{i+1,\dots,m}(\Fix T_{CDR\lambda}\cap U)\subseteq U_i,\\[5pt]
&R_{A_{i+1}}T_{i+1,\dots,m}(\Fix T_{CDR\lambda}\cap U)\subseteq U_i,\\[5pt]
&P_{A_{i+1}}T_{i+1,\dots,m}(\Fix T_{CDR\lambda}\cap U)\subseteq U_i\,.
\end{array}
\end{equation}
\end{enumerate}
For $i=1,\dots,m$, set $L_{i}=T_{A_{i}}(\bar x_{i})+\bar x_{i}$ and for $i=1,\dots,m-1$ define
$T_{\overline{CP}}=P_{A_1}\circ P_{A_2}\circ\dots\circ P_{A_{m}}\circ P_{A_1}$.

Then
\begin{equation}\label{eq:shadow.sunset.CP.noise.thm}
    P_{A_1}(\Fix T_{CDR\lambda}\cap U)\subseteq \Fix_{\!\!\bar\epsilon}\, T_{\overline{CP}}
\end{equation}
where
\begin{subequations}
\begin{equation}\label{e:varepsiloni+1}
\bar\epsilon=4\sum_{i=1}^m \bar\epsilon_{1,\dots,i}\quad \mbox{ for }\quad
\bar\epsilon_{1,\dots,i+1}:=\bar\epsilon_{1,\dots,i}+\bar\epsilon_{i+1} \sup_{w\in\mathbb B}{\|\Phi_{1,\dots,i}w\|}
\end{equation}
with
\begin{equation}\label{e:Phi_i}
\Phi_{1,\dots,i+1}(x):=(1-\bar\epsilon_{i+1})\Phi_{1,\dots,i}\left(\frac{1}{1-\bar\epsilon_{i+1}}\Phi_{i+1}(x)\right)\,,
\quad\mbox{for }\, \Phi_i(x):=P_{L_i}x
\end{equation}
($i=1,\dots,m$).
\end{subequations}
Moreover $\bar\epsilon\to 0$
as $\bar\epsilon_i\to 0$ for all $i$.
\end{theorem}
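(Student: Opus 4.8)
The plan is to reduce the theorem to a chain of results already established in the excerpt: Proposition \ref{t:T_S line-Shapiro+} promotes the subspace hypothesis on the tangent cones to the Shapiro$^+$ property; Lemma \ref{lem:suff.cond.approxi} then combines $\epsilon$-super-regularity with Shapiro$^+$ to approximate each metric projector $P_{A_i}$ locally by the projector $P_{L_i}$ onto the translated tangent space $L_i=\mathcal T_{A_i}(\bar x_i)+\bar x_i$; and finally Lemma \ref{lem:shadow.fixed.point.subset.CP.noise} propagates these affine approximations through the cyclic composition to obtain the inclusion for the shadows of $\Fix T_{CDR\lambda}$. So the proof is essentially a verification that hypotheses (a)--(d) here feed exactly into the hypotheses of those three results.

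First I would fix, for each $i\in\{1,\dots,m\}$ (recall $A_{m+1}=A_1$), the base point $\bar x_i\in A_i$ from hypothesis (b). By hypothesis (c) the tangent cone $\mathcal T_{A_i}(\bar x_i)$ is a subspace, so Proposition \ref{t:T_S line-Shapiro+} gives that $A_i$ has the Shapiro$^+$ property at $\bar x_i$ on all of $\Ebb$ with constant $k_i=\|\mathcal A_i^\top(\mathcal A_i\mathcal A_i^\top)^{-1}\mathcal A_i\|$, where $\mathcal A_i$ is a matrix with kernel $\mathcal T_{A_i}(\bar x_i)$; in particular the Shapiro$^+$ neighborhood $V_i$ may be taken to be $\Ebb$, and the set $U_i=W_i\cap V_i\cap B(\bar x_i,r_i)$ of hypothesis (c) is precisely the set produced by Lemma \ref{lem:suff.cond.approxi} applied with $A\leftarrow A_i$, $\bar x\leftarrow\bar x_i$, $W\leftarrow W_i$, $\epsilon_W\leftarrow\epsilon_{W_i}$ (hypothesis (b)), $V\leftarrow\Ebb$, $k\leftarrow k_i$, $\bar\epsilon\leftarrow\bar\epsilon_i$, so that $r\leftarrow r_i=\bar\epsilon_i/(\sqrt{1+\hat\epsilon_i}+k_i+1)$ and $\hat\epsilon_i=4\epsilon_{W_i}/(1-\epsilon_{W_i})^2$. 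That lemma then yields that $P_{A_i}$ is approximated on $U_i$ by the affine operator $\Phi_i=P_{L_i}$ with error $\bar\epsilon_i$.

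Next I would check that hypotheses (a)--(d) supply exactly conditions (a)--(c) of Lemma \ref{lem:shadow.fixed.point.subset.CP.noise}: its condition (a) is hypothesis (a) here; its condition (b) is what was just established, with $\varepsilon_i\leftarrow\bar\epsilon_i$ and $\Phi_i\leftarrow P_{L_i}$; and its condition (c) is hypothesis (d) verbatim (the inclusion $P_{A_{i+1}}U_{i+1}\subseteq U_i$ together with the four inclusions of hypothesis (d)). Moreover the recursive formulas \eqref{e:varepsiloni+1} and \eqref{e:Phi_i} for $\bar\epsilon_{1,\dots,i+1}$ and $\Phi_{1,\dots,i+1}$ coincide with the quantities defined in Lemma \ref{lem:composition.affine.m} and used in Lemma \ref{lem:shadow.fixed.point.subset.CP.noise}. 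Invoking Lemma \ref{lem:shadow.fixed.point.subset.CP.noise} therefore gives at once
\[
P_{A_1}(\Fix T_{CDR\lambda}\cap U)\subseteq \mathrm{Fix}_{\bar\epsilon}\, T_{\overline{CP}},\qquad \bar\epsilon=4\sum_{i=1}^m\bar\epsilon_{1,\dots,i},
\]
which is \eqref{eq:shadow.sunset.CP.noise.thm}.

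For the limiting claim, I would observe that each $\Phi_i=P_{L_i}$ is affine with linear part the orthogonal projector onto the subspace $\mathcal T_{A_i}(\bar x_i)$, so $\sup_{w\in\mathbb B}\|\Phi_i w\|\le 1+\|\bar x_i\|$ is finite and $\bar\epsilon$-independent; a short induction on $i$ using \eqref{e:Phi_i} — where the only dependence on the $\bar\epsilon_j$ is through the scalars $1/(1-\bar\epsilon_{j+1})\to 1$ — shows that $\sup_{w\in\mathbb B}\|\Phi_{1,\dots,i}w\|$ stays bounded as all $\bar\epsilon_j\to 0$, whence \eqref{e:varepsiloni+1} forces $\bar\epsilon_{1,\dots,i}\to 0$ for each $i$ and thus $\bar\epsilon\to 0$. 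The only real work in the whole argument is bookkeeping: making sure the sets $U_i$ are simultaneously the output neighborhoods of Lemma \ref{lem:suff.cond.approxi} (one per set) and the compatible input neighborhoods required by Lemma \ref{lem:shadow.fixed.point.subset.CP.noise} for the composition — but this is exactly what hypothesis (d) asserts, so no genuine obstacle remains beyond the boundedness check just described.
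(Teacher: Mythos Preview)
Your proposal is correct and follows exactly the paper's own route: invoke Proposition \ref{t:T_S line-Shapiro+} to obtain the Shapiro$^+$ property from hypothesis (c), feed this together with hypothesis (b) into Lemma \ref{lem:suff.cond.approxi} to get the affine approximation $P_{A_i}\approx P_{L_i}$ on $U_i$, and then apply Lemma \ref{lem:shadow.fixed.point.subset.CP.noise} under hypotheses (a) and (d). Your added justification that $\sup_{w\in\mathbb B}\|\Phi_{1,\dots,i}w\|$ remains bounded as the $\bar\epsilon_j\to 0$ (hence $\bar\epsilon\to 0$) is a welcome detail the paper leaves implicit.
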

\begin{proof}
   By Lemma \ref{lem:suff.cond.approxi} and Proposition \ref{t:T_S line-Shapiro+},  conditions (b) and (c)
   imply that $P_{A_i}$ is approximated by $P_{L_i}$ with error $\bar\epsilon_i$ on $U_i$, $i=1,2,\dots,m$.
   Using Lemma \ref{lem:shadow.fixed.point.subset.CP.noise}, we obtain the result \eqref{eq:shadow.sunset.CP.noise.thm}.
\end{proof}
\begin{remark}
The heuristic of using one or more projections to clean up (approximate) fixed points of the CDR$\lambda$ mapping
can now be fully justified.
    For $\bar{\epsilon}$ small enough, $\Fix_{\!\!\bar{\epsilon}}T_{\overline{CP}}$ approximates $\Fix T_{\overline{CP}}$ and
    clearly
    \begin{equation}
    (T_{\overline{CP}})^n\Fix_{\!\!\bar{\epsilon}}T_{\overline{CP}}\underset{n}{\to} \Fix T_{\overline{CP}}
    \end{equation}
    where convergence is at the local rate of convergence of the cyclic projections mapping, and presumably not many
    iterations are required to achieve a desired tolerance.
Based on Theorem \ref{theo:shadow.fixed.point.subset.CP.noise}, we obtain
\begin{equation}\label{eq:hybrid.CDRl.CP}
T_{\overline{CP}}(\Fix T_{CDR\lambda}\cap U)=T_{\overline{CP}}(P_{A_1}(\Fix T_{CDR\lambda}\cap U))\subseteq T_{\overline{CP}}(\Fix_{\!\!\bar\epsilon}\, T_{\overline{CP}}).
\end{equation}
From this, it follows that 
\begin{equation}
(T_{\overline{CP}})^n(\Fix T_{CDR\lambda}\cap U) \subseteq (T_{\overline{CP}})^n(\Fix_{\!\!\bar\epsilon}\, T_{\overline{CP}})\underset{n}{\to} \Fix T_{\overline{CP}}=\Fix T_{CP},
\end{equation}
where $T_{CP}=P_{A_1}\dots P_{A_m}$.
The last equality follows from Remark \ref{rem:fix.CP.bar.eq.fix.CP}.
Again, for small enough neighborhoods, and with sets that are smooth enough, then only a few
iterations $n$ of cyclic projections
will be required to map fixed points of the cyclic relaxed Douglas-Rachford mapping to
neighborhoods of fixed points of cyclic projections, where
interpretation is easier.
\end{remark}

\if{The fixed point operator that we study, $T$ is built from compositions of relaxed Douglas-Rachford operators, and these are built from compositions and over-relaxations of projectors, hence by Lemma \ref{t:av-comp av} the the $\alpha$-firm nonexpensiveness of $T$  will come from the the $\alpha$-firm nonexpensiveness of the individual projectors.  The {\color{blue}the $\alpha$-firm nonexpensiveness} of the projectors, moreover, is inherited by the regularity of the sets, so at the bottom of all of this is set-regularity. }\fi

\section{Quantitative convergence analysis}\label{s:quant}

The results of this section rely on the following standing assumptions.
\begin{assumption}\label{a:1}  The following hold for all  $j=1,2\dots, m\geq 2$ with $m+1\equiv 1$.
\begin{enumerate}[(a)]
\item\label{a:1,a} The sets  $A_j$ are nonempty and closed subsets of $\Ebb$.
\item\label{a:1,b}  There is a neighborhood $U_j$ of each $x_j\in A_j$ and a set
$\Lambda_j:=P^{-1}_{A_j} (A_j\cap U_j)\cap U_j$ on which
$A_j$ is $\epsilon$-super-regular at $x_j$ relative to $\Lambda_j$ with constant $\epsilon_{U_j}$
    on the neighborhood $U_j$.
\item\label{a:1,c} On the same neighborhoods and sets defined in (b), the reflectors satisfy\\
$R_{A_{j+1}}U_{j+1}\subseteq U_{j}$ and $R_{A_{j+1}}\Lambda_{j+1}\subseteq \Lambda_{j}$.
\item\label{a:1,d}  $T_{j,j+1}\Lambda_{j+1}\subseteq \Lambda_{j}$ and
$T_{j,j+1}U_{j+1}\subseteq U_{j}$ for $T_{j,j+1}$ defined by \eqref{eq:DRl.Ai}.
\end{enumerate}
\end{assumption}

The next lemma establishes the constant and violation for relaxed Douglas--Rachford operators constructed from
pointwise a$\alpha$-fne mappings.  This result is very similar to \cite[Theorem 3.7]{luke2020convergence}, but with
more detailed values of the parameters.
\begin{proposition}\label{lem:Tfirmly.nonep}
Let $\lambda\in [0,1]$ be fixed and let Assumption \ref{a:1}\eqref{a:1,a}-\eqref{a:1,c} hold
for the collection of sets $\{A_1, A_2, \dots, A_m\}$.
The two-set relaxed Douglas-Rachford mapping defined by \eqref{eq:DRl.Ai}
is pointwise a$\alpha$-fne at all $y\in \Lambda_{j+1}$ with constant $\alpha_{j,j+1}=1/2$ and
violation $\epsilon_{j,j+1}$ on $U_{j+1}$
where
\begin{equation}
    \epsilon_{j,j+1}:=\frac{1}{2}\left[\left(\lambda\sqrt{1+\tilde\epsilon_{j}}+1-\lambda\right)^2(1+\tilde\epsilon_{j+1})-1\right]
\end{equation}
for
$\tilde{\epsilon}_j= 8\epsilon_{U_j}(1 + \epsilon_{U_j})/(1 -\epsilon_{U_j})^2$ \,$(j=1,2,\dots, m)$.

If in addition Assumption \ref{a:1}\eqref{a:1,d} holds, then the cyclic relaxed Douglas-Rachford mapping
\eqref{eq:TCDRl} is pointwise
a$\alpha$-fne at all $y\in \Lambda_1$ on $U_1$ with
\begin{equation}\label{eq:violation.composite}
 \alpha = \frac{m}{m+1}
\quad\mbox{ and violation }\quad
    \epsilon=\prod_{j=1}^m(1+\epsilon_{j,j+1})-1.
\end{equation}
\end{proposition}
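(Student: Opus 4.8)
The plan is to treat the two assertions separately, first pinning down the parameters of the two-set operator $T_{j,j+1}$ by reducing it to the reflector estimates of Lemma~\ref{lem:nonexp.refector.projector}, and then obtaining the composite statement as a direct instance of the chain rule Lemma~\ref{t:av-comp av}. For the first assertion, I would invoke Lemma~\ref{lem:average char} with $\alpha=1/2$: the mapping $T_{j,j+1}$ is pointwise a$\alpha$-fne at $y$ on $U_{j+1}$ with constant $1/2$ and violation $\epsilon_{j,j+1}$ if and only if $2T_{j,j+1}-\Id$ is pointwise almost nonexpansive at $y$ on $U_{j+1}$ with violation $2\epsilon_{j,j+1}$. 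A one-line computation from \eqref{eq:DRl.Ai}, using $2P_{A_{j+1}}-\Id=R_{A_{j+1}}$, gives the identity
\begin{equation*}
2T_{j,j+1}-\Id=\lambda R_{A_j}R_{A_{j+1}}+(1-\lambda)R_{A_{j+1}},
\end{equation*}
so the task becomes estimating this convex combination of reflectors.

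The core estimate is a two-step application of Lemma~\ref{lem:nonexp.refector.projector}(ii) together with the triangle inequality. Fix $y\in\Lambda_{j+1}$ and $x\in U_{j+1}$, and take arbitrary selections $w_1,w_2\in R_{A_{j+1}}x$, $w_1',w_2'\in R_{A_{j+1}}y$, $u\in R_{A_j}w_1$, $u'\in R_{A_j}w_1'$, so that $\lambda u+(1-\lambda)w_2$ is a generic selection of $(2T_{j,j+1}-\Id)x$ and $\lambda u'+(1-\lambda)w_2'$ of $(2T_{j,j+1}-\Id)y$. By Assumption~\ref{a:1}\eqref{a:1,c} the reflections $w_1,w_2$ lie in $U_j$ and $w_1',w_2'$ in $\Lambda_j$, so Lemma~\ref{lem:nonexp.refector.projector}(ii) applies once to $A_{j+1}$, giving $\|w_i-w_i'\|\le\sqrt{1+\tilde\epsilon_{j+1}}\,\|x-y\|$, and once to $A_j$, giving $\|u-u'\|\le\sqrt{1+\tilde\epsilon_j}\,\|w_1-w_1'\|$. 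Combining these with the triangle inequality,
\begin{equation*}
\|(\lambda u+(1-\lambda)w_2)-(\lambda u'+(1-\lambda)w_2')\|\le\bigl(\lambda\sqrt{1+\tilde\epsilon_j}+1-\lambda\bigr)\sqrt{1+\tilde\epsilon_{j+1}}\,\|x-y\|.
\end{equation*}
Since the selections were arbitrary, $2T_{j,j+1}-\Id$ is pointwise almost nonexpansive at $y$ on $U_{j+1}$ with violation $\bigl(\lambda\sqrt{1+\tilde\epsilon_j}+1-\lambda\bigr)^2(1+\tilde\epsilon_{j+1})-1=2\epsilon_{j,j+1}$, and Lemma~\ref{lem:average char} returns the first assertion (implicitly we need $\epsilon_{j,j+1}<1$ for the a$\alpha$-fne inequality \eqref{e:paafne} to be meaningful).

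For the composite statement I would apply Lemma~\ref{t:av-comp av} with $T_k\leftarrow T_{k,k+1}$, $U_k\leftarrow U_{k+1}$ and $S_k\leftarrow\Lambda_{k+1}$ for $k=1,\dots,m$, recalling $m+1\equiv1$ so that the innermost map $T_{m,m+1}$ acts on $U_1$ at $\Lambda_1$. The nesting hypotheses of the chain rule, namely $T_{k,k+1}U_{k+1}\subseteq U_k$ and $T_{k,k+1}\Lambda_{k+1}\subseteq\Lambda_k$, are exactly Assumption~\ref{a:1}\eqref{a:1,d}, and $\Lambda_{k+1}\subseteq U_{k+1}$ by construction. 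Lemma~\ref{t:av-comp av} then yields that $T$ is pointwise a$\alpha$-fne at all $y\in\Lambda_1$ on $U_1$ with violation $\prod_{j=1}^m(1+\epsilon_{j,j+1})-1$ and averaging constant $\tfrac{m}{m-1+1/(1/2)}=\tfrac{m}{m+1}$, which is \eqref{eq:violation.composite}.

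I do not expect a genuine obstacle here; the argument is essentially bookkeeping built on three earlier results. The one point that requires care is the set-valuedness: the reflector estimate of Lemma~\ref{lem:nonexp.refector.projector} must be applied to \emph{every} selection that appears when $2T_{j,j+1}-\Id$ is expanded into $\lambda R_{A_j}R_{A_{j+1}}+(1-\lambda)R_{A_{j+1}}$, and one must verify via Assumption~\ref{a:1}\eqref{a:1,c} that each intermediate reflection $R_{A_{j+1}}x$ lands in the neighborhood $U_j$ on which the super-regularity of $A_j$ is available. Without the nesting in \eqref{a:1,c}--\eqref{a:1,d} the local estimates could not be chained, so checking those inclusions is really the substance of the proof.
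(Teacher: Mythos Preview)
Your proposal is correct and follows essentially the same route as the paper: reduce via Lemma~\ref{lem:average char} to showing $2T_{j,j+1}-\Id$ is pointwise almost nonexpansive, rewrite this as $\lambda R_{A_j}R_{A_{j+1}}+(1-\lambda)R_{A_{j+1}}$ (the paper factors it as $(\lambda R_{A_j}+(1-\lambda)\Id)R_{A_{j+1}}$, which is the same thing), chain the reflector estimates from Lemma~\ref{lem:nonexp.refector.projector}(ii) using Assumption~\ref{a:1}\eqref{a:1,c}, and then invoke Lemma~\ref{t:av-comp av} for the composite. The paper additionally observes that pointwise almost nonexpansiveness at $y$ forces $R_{A_{j+1}}$ to be single-valued there, so your separate selections $w_1',w_2'$ collapse, but your more cautious bookkeeping with two selections on the $x$ side is harmless and arguably the right reading of \eqref{eq:DRl.Ai} as a set-valued map.
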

\begin{proof}
Let $y\in \Lambda_{j+1}$ and  $x\in U_{j+1}$.
By Lemma \ref{lem:average char}, it is sufficient to prove that $\tilde{T}:= 2T_{j,j+1}-\Id$
is pointwise almost nonexpensive at $y$ with violation $2\epsilon_{j,j+1}$ on $U_{j+1}$.
Starting with \eqref{eq:DRl.Ai} we derive an equivalent representation for
$\tilde{T}$:
\begin{align}\label{e:TR/ST}
    \tilde{T} &=2\left(\frac{\lambda}{2}(R_{A_j} R_{A_{j+1}} +\Id)+(1-\lambda)P_{A_{j+1}}  \right)-\Id\\
&=\lambda (R_{A_j} R_{A_{j+1}} +\Id) +2(1-\lambda)P_{A_{j+1}} -\Id \\
&=\lambda R_{A_j} R_{A_{j+1}} + 2(1-\lambda)P_{A_{j+1}} + (\lambda-1)\Id\\
&=\lambda R_{A_j} R_{A_{j+1}} + (1-\lambda)(2P_{A_{j+1}}-\Id)\\
&=\lambda R_{A_j} R_{A_{j+1}} + (1-\lambda)R_{A_{j+1}}\\
&=\left(\lambda R_{A_j}+ (1-\lambda)\Id\right) R_{A_{j+1}}.
\end{align}
Let $y^+\in \tilde{T}y$ and $x^+\in \tilde{T}x$. We claim that
\begin{equation}
    \|y^+-x^+\|\leq\sqrt{1+2\epsilon_{j,j+1}}\|x-y\|.
\end{equation}
Indeed, let $u\in R_{A_{j+1}}y$, $z\in R_{A_{j+1}}x$, $u'\in R_{A_j}u$, $z'\in R_{A_j}z$ so
that by \eqref{e:TR/ST}
\begin{align*}
\tilde{T}y &= \left(\lambda R_{A_j}+ (1-\lambda)\Id\right) R_{A_{j+1}}y\ni
\lambda u' + (1-\lambda)u,\\
\tilde{T}x &= \left(\lambda R_{A_j}+ (1-\lambda)\Id\right) R_{A_{j+1}}x
\ni \lambda z' + (1-\lambda)z.
\end{align*}
It follows by Assumption \ref{a:1}\eqref{a:1,a} and \eqref{a:1,c}
that $u\in R_{A_{j+1}}\Lambda_{j+1}\subseteq \Lambda_j$
and $z\in R_{A_{j+1}}U_{j+1}\subseteq U_j$.
By Assumption \ref{a:1}\eqref{a:1,a}-\eqref{a:1,b} and Lemma \ref{lem:nonexp.refector.projector}, $R_{A_{j+1}}$
is pointwise almost nonexpansive with
violation $\tilde \epsilon_{j+1}$ given by \eqref{e:epstilde} with $\epsilon_U$ replaced by
$\epsilon_{U_{j+1}}$ at each point in $\Lambda_{j+1}$ on $U_{j+1}$.  $R_{A_{j+1}}$ is therefore also single-valued
at every point in $\Lambda_{j+1}$ so we can write $u = R_{A_{j+1}}y$,  and
\begin{subequations}
\begin{equation}\label{eq:nonexp.RA2}
    \|z-u\|\leq \sqrt{1+\tilde\epsilon_{j+1}}\|x-y\|.
\end{equation}
By the same argument, $R_{A_j}$ is pointwise almost nonexpansive (and hence single-valued)
with violation $\tilde \epsilon_j$
given by \eqref{e:epstilde} with $\epsilon_U$ replaced by
$\epsilon_{U_j}$ at all points in $\Lambda_j$ on
$U_j$. We can therefore write $u'=R_{A_j}u$, and
\begin{equation}\label{eq:nonexp.RA1}
    \|z'-u'\|\leq \sqrt{1+\tilde\epsilon_j}\|z-u\|.
\end{equation}
\end{subequations}
Let $u'' = \lambda u'+(1-\lambda)u $ and $z'' = \lambda z'+(1-\lambda)z$.
Then $u''\in\tilde{T}y$ and $z''\in \tilde{T}x$. We have
\begin{align*}
    \|z''-u''\| &=\|\lambda z'+(1-\lambda)z-\lambda u'-(1-\lambda)u  \|\\
    &\leq \lambda\|z'-u'\| +(1-\lambda)\|z-u\|\\
    &\leq \lambda \sqrt{1+\tilde\epsilon_1}\|z-u\| +(1-\lambda)\|z-u\|\\
    &= \left(\lambda\sqrt{1+\tilde\epsilon_j}+1-\lambda\right)\|z-u\| \\
    &\leq \left(\lambda\sqrt{1+\tilde\epsilon_j}+1-\lambda\right)\sqrt{1+\tilde\epsilon_{j+1}}\|x-y\|\\
    &= \sqrt{1+2\epsilon_{j,j+1}}\|x-y\|.
\end{align*}
The first inequality follows from the triangle inequality.
The other  inequalities are a consequence of \eqref{eq:nonexp.RA1} and \eqref{eq:nonexp.RA2}.
Since $y\in\Lambda_{j+1}$ and $x\in U_{j+1}$ are arbitrary, the first claim follows.

Assumption \ref{a:1}\eqref{a:1,d} allows application of Lemma \ref{t:av-comp av} to complete the proof.
\end{proof}

\if{We illustrate points $x,R_{A_2}x,R_{A_1}R_{A_2}x,\tilde{T}x$ and $y,R_{A_2}y,R_{A_1}R_{A_2}y,\tilde{T}y$ from the proof of Proposition \ref{lem:Tfirmly.nonep} with $\lambda=\frac{1}{3}$ in Figure \ref{fig:firmly.nonexpansiveness.DRl}.
\begin{figure}
\centering
\definecolor{ududff}{rgb}{0.30196078431372547,0.30196078431372547,1}
\begin{tikzpicture}[line cap=round,line join=round,>=triangle 45,x=1cm,y=1cm]
\clip(-0.46,-2.45) rectangle (10.78,3.11);
\draw [line width=1pt] (-0.74,0.13) circle (3.4766650687116814cm);
\draw [line width=1pt] (9.62,1.03) circle (3.778306498948968cm);
\draw (7.3,-1.17) node[anchor=north west] {$A_2$};
\draw (1.12,-1.59) node[anchor=north west] {$A_1$};
\draw (8.3,0.7) node[anchor=north west] {$x$};
\draw (4.22,-1.8) node[anchor=north west] {$x'$};
\draw (-0.1,-0.11) node[anchor=north west] {$x''$};
\draw [line width=1pt,dash pattern=on 1pt off 1pt] (4.418784875866326,-1.8678198548744787)-- (0.585300418769549,-0.38324324506681995);
\draw [line width=1pt,dash pattern=on 1pt off 1pt] (4.418784875866326,-1.8678198548744787)-- (8.22,0.25);
\draw (7.8,1.8) node[anchor=north west] {$y$};
\draw (3.96,2.8) node[anchor=north west] {$y'$};
\draw (0.1,1) node[anchor=north west] {$y''$};
\draw [line width=1pt,dash pattern=on 1pt off 1pt] (0.824308755921165,0.7207650611193552)-- (4.200606987189753,1.9958324181245988);
\draw [line width=1pt,dash pattern=on 1pt off 1pt] (4.200606987189753,1.9958324181245988)-- (7.6,1.39);
\draw (3.1,0.5) node[anchor=north west] {$x'''$};
\draw (3.14,1.7) node[anchor=north west] {$y'''$};
\draw [line width=1pt] (7.6,1.39)-- (8.22,0.25);
\draw [line width=1pt] (0.824308755921165,0.7207650611193552)-- (0.585300418769549,-0.38324324506681995);
\draw [line width=1pt] (4.200606987189753,1.9958324181245988)-- (4.418784875866326,-1.8678198548744787);
\draw [line width=1pt,dash pattern=on 1pt off 1pt] (7.6,1.39)-- (0.824308755921165,0.7207650611193552);
\draw [line width=1pt,dash pattern=on 1pt off 1pt] (8.22,0.25)-- (0.585300418769549,-0.38324324506681995);
\draw [line width=1pt] (3.0828725039474434,0.9438433740795702)-- (3.1302002791796997,-0.17216216337788004);
\begin{scriptsize}
\draw [fill=ududff] (8.22,0.25) circle (2pt);
\draw [fill=ududff] (4.418784875866326,-1.8678198548744787) circle (2pt);
\draw [fill=ududff] (0.585300418769549,-0.38324324506681995) circle (2pt);
\draw [fill=ududff] (7.6,1.39) circle (2pt);
\draw [fill=ududff] (4.200606987189753,1.9958324181245988) circle (2pt);
\draw [fill=ududff] (0.824308755921165,0.7207650611193552) circle (2pt);
\draw [fill=ududff] (3.0828725039474434,0.9438433740795702) circle (2pt);
\draw [fill=ududff] (3.1302002791796997,-0.17216216337788004) circle (2pt);
\end{scriptsize}
\end{tikzpicture}
\caption{Pointwise almost firm nonexpansiveness of relaxed Douglas--Rachford operators. Setting $\lambda=\frac{1}{3}$, $x':=R_{A_2}x,x'':=R_{A_1}x',x''':=\tilde{T}x:=\lambda x''+(1-\lambda)x'$, $y':=R_{A_2}y,y'':=R_{A_1}y',y''':=\tilde{T}y:=\lambda y''+(1-\lambda)y'$.}
\label{fig:firmly.nonexpansiveness.DRl}
\end{figure}
}\fi

The next lemma uses the theory of {\em metric subregularity} of the {\em transport discrepancy}
related to a general fixed point mapping $T$ as developed in \cite{berdellima2022alpha}.
We do not go into details here, but
present this more directly in terms of the mapping $T$.  Recall that a
function $\rho:[0,\infty) \to [0,\infty)$ is a \textit{gauge function} when it is
continuous, strictly increasing
with $\rho(0)=0$, and $\lim_{t\to \infty}\rho(t)=\infty$.  For our purposes, we
will define this implicitly as follows.
\begin{align}\label{eq:gauge}
& \rho\paren{\paren{\frac{(1+\epsilon)t^2-\paren{\theta_{\alpha,\epsilon}(t)}^2}{\tfrac{1-\alpha}{\alpha}}}^{1/2}}=
 t\\
 &\qquad\qquad\qquad\quad\iff\\
& \theta_{\alpha,\epsilon}(t) = \paren{(1+\epsilon)t^2 - \tfrac{1-\alpha}{\alpha}\paren{\rho^{-1}(t)}^2}^{1/2},
\end{align}
where
$\map{\theta_{\alpha,\epsilon}}{[0,\infty)}{[0,\infty)}$
with parameters $\alpha\in [0,1)$ and $\epsilon\geq 0$ satisfying
\begin{eqnarray}\label{eq:theta_tau_eps}
(a)~ \theta_{\alpha,\epsilon}(0)=0; \quad (b)~ 0<\theta_{\alpha,\epsilon}(t)<t ~\forall t\in(0,\tbar]
\mbox{ for some }\tbar>0.\end{eqnarray}
The function $\theta_{\alpha,\epsilon}$ defined in this way is a gauge function whenever
$\rho$ is.

In the next theorem the parameter $\epsilon$ is
exactly the violation in a$\alpha$-fne mappings and $\alpha$ is the ``averaging'' constant.
In preparation for the results that follow, we will require at least one of the
additional assumptions on $\theta$.
\begin{assumption}\label{a:msr convergence}
The gauge $\theta_{\alpha,\epsilon}$ satisfies \eqref{eq:theta_tau_eps} and
at least one of the following holds.
 \begin{enumerate}[(a)]
\item\label{t:msr convergence, necessary sublin} $\theta_{\alpha,\epsilon}$ satisfies
\begin{equation}\label{eq:theta to zero}
    \theta_{\alpha,\epsilon}^{(k)}(t)\to 0\mbox{ as }k\to\infty~\forall t\in(0,\tbar),
\end{equation}
 and the sequence $(x^{(k)})$ is Fej\'er monotone with respect to $\Fix T$, i.e.
\begin{equation}\label{eq:Fejer}
\dist\paren{x^{(k+1)},\, y}\leq \dist(x^{(k)}, y) \quad \forall k\in\Nbb,
\forall y\in \Fix T;
\end{equation}
\item\label{t:msr convergence, necessary lin+}
$\theta_{\alpha,\epsilon}$ satisfies
\begin{equation}\label{eq:theta summable}
    \sum_{j=1}^\infty\theta_{\alpha,\epsilon}^{(j)}(t)<\infty~\forall t\in(0,\tbar)
\end{equation}
where $\theta_{\alpha,\epsilon}^{(j)}$ denotes the $j$-times composition of $\theta_{\alpha,\epsilon}$.
\end{enumerate}
\end{assumption}

In the case when the gauge $\rho$ is {\em linear}, then
\[
\rho(t)=\kappa t\quad\iff\quad
\theta_{\alpha, \epsilon}(t)=\paren{(1+\epsilon)-\frac{1-\alpha}{\alpha\kappa^2}}^{1/2}t\quad
\paren{\kappa\geq \sqrt{\tfrac{1-\alpha}{\alpha(1+\epsilon)}}}.
\]
The conditions in \eqref{eq:theta_tau_eps} in this
case simplify to $\theta_{\alpha, \epsilon}(t)=\gamma t$ where
\begin{equation}\label{eq:theta linear}
 0< \gamma\equiv \sqrt{1+\epsilon-\frac{1-\alpha}{\alpha\kappa^2}}<1\quad\iff\quad
\sqrt{\tfrac{1-\alpha}{\alpha(1+\epsilon)}}\leq  \kappa\leq \sqrt{\tfrac{1-\alpha}{\alpha\epsilon}}.
\end{equation}
In this case  $\theta_{\alpha, \epsilon}(t)$ satisfies
Assumption \ref{a:msr convergence}\eqref{t:msr convergence, necessary lin+}.
The weaker Assumption \ref{a:msr convergence}\eqref{t:msr convergence, necessary sublin}
characterizes sublinear convergence.

The next result is a restatement of \cite[Theorem 7]{luke23} for the setting of multi-valued mappings
in Euclidean spaces. \cite[Theorem 7]{luke23} concerns random selections from collections of
single-valued mappings, but the extension to multi-valued mappings presents no difficulty here since
we are in the deterministic case,
and logic of the proof of \cite[Theorem 7]{luke23}, specialized to a single mapping, works here as well.
\begin{proposition}\label{t:metric.subreg.convergence} Let $T:\Lambda\rightrightarrows \Lambda$ for
$\Lambda\subseteq\Ebb$ with $\Fix T$ nonempty and closed.
Suppose that, for all neighborhoods $U$ of $\Fix T$ small enough, there is an  $\bar\epsilon$
and $\bar\alpha<1$ such that
\begin{enumerate}[(a)]
   \item $T$ is pointwise a$\alpha$-fne at all $y\in \Fix T\cap \Lambda$ with constant $\bar\alpha$ and
       violation $\bar\epsilon$ on  $U$, and
   \item for the gauge $\rho$
   \begin{equation}\label{e:msr}
    \dist(x,\Fix T\cap \Lambda)\leq \rho(\dist(x, Tx))\quad\forall x\in U\cap \Lambda.
   \end{equation}
\end{enumerate}
Then, for any $x^{(k)}\in U\cap\Lambda$ close enough to $\Fix T\cap \Lambda$, the iterates $x^{(k+1)}\in Tx^{(k)}$ satisfy
\begin{equation}%
\dist(x^{(k+1)}, \Fix T\cap \Lambda)
\leq \theta_{\alpha, \epsilon}\paren{\dist\paren{x^{(k)}, \Fix T\cap \Lambda}}
\end{equation}%
where $\theta_{\alpha, \epsilon}$ is given by \eqref{eq:gauge}.
If in addition $\theta_{\alpha, \epsilon}$ satisfies one of the
conditions in Assumption \ref{a:msr convergence} with parameter values
$\bar\alpha$ and $\bar\epsilon$, then for all $x^{(0)}$ close enough to
$\Fix T\cap \Lambda$, $x^{(k)}\to \bar x\in \Fix T\cap \Lambda$
with rate
 $O\paren{\theta_{\tau, \epsilon}^{(k)}(t_0)}$ in case
 Assumption \ref{a:msr convergence}\eqref{t:msr convergence, necessary sublin}
 ($t_0\equiv\dist(x^{(0)}, \Fix T\cap \Lambda)$),
 and with rate  $O(s_k(t_0))$ for
$s_k(t)\equiv
\sum_{j=k}^\infty \theta_{\tau, \epsilon}^{(j)}(t)$
in case Assumption \ref{a:msr convergence}\eqref{t:msr convergence, necessary lin+}.
\end{proposition}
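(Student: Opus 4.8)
The plan is to follow the proof of \cite[Theorem 7]{luke23}, specialized to a single, possibly multi-valued, self-mapping of $\Lambda$ in the deterministic regime: the only place stochastic selections enter that proof is the choice of the next iterate, which is already fixed here, so the argument carries over verbatim. Concretely, I would fix a neighborhood $U$ of $\Fix T$ on which (a) and (b) hold, abbreviate $t_k:=\dist(x^{(k)},\Fix T\cap\Lambda)$, and --- since $\Fix T\cap\Lambda$ is closed --- pick for each $k$ a nearest point $\bar x^{(k)}\in\Fix T\cap\Lambda$. Because $\bar x^{(k)}\in T\bar x^{(k)}$, applying the pointwise a$\alpha$-fne inequality \eqref{e:paafne} at $y=\bar x^{(k)}$ with $y^+=\bar x^{(k)}$, $x=x^{(k)}$, $x^+=x^{(k+1)}\in Tx^{(k)}$ yields
\begin{equation*}
t_{k+1}^2\le\|x^{(k+1)}-\bar x^{(k)}\|^2\le(1+\bar\epsilon)\,t_k^2-\tfrac{1-\bar\alpha}{\bar\alpha}\,\|x^{(k+1)}-x^{(k)}\|^2 .
\end{equation*}

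Next I would feed in the metric subregularity estimate \eqref{e:msr}. Since $x^{(k+1)}\in Tx^{(k)}$ one has $\|x^{(k+1)}-x^{(k)}\|\ge\dist(x^{(k)},Tx^{(k)})\ge\rho^{-1}(t_k)$, the last inequality being \eqref{e:msr} together with the monotonicity of the gauge $\rho$. Substituting into the displayed inequality gives $t_{k+1}^2\le(1+\bar\epsilon)\,t_k^2-\tfrac{1-\bar\alpha}{\bar\alpha}\,(\rho^{-1}(t_k))^2$, which equals $\bigl(\theta_{\bar\alpha,\bar\epsilon}(t_k)\bigr)^2$ by the defining relation \eqref{eq:gauge}; hence $t_{k+1}\le\theta_{\bar\alpha,\bar\epsilon}(t_k)$, the claimed one-step estimate. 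Since $\theta_{\bar\alpha,\bar\epsilon}$ is increasing, iterating gives $t_k\le\theta_{\bar\alpha,\bar\epsilon}^{(k)}(t_0)$, and the convergence-rate statements follow from Assumption \ref{a:msr convergence}. Under \eqref{t:msr convergence, necessary sublin}, $t_k\to0$, and a sequence that is Fej\'er monotone with respect to the closed set $\Fix T\cap\Lambda$ (by \eqref{eq:Fejer}) and whose distance to that set vanishes converges to a single point $\bar x\in\Fix T\cap\Lambda$, at rate $O(\theta_{\bar\alpha,\bar\epsilon}^{(k)}(t_0))$. Under \eqref{t:msr convergence, necessary lin+}, the displayed a$\alpha$-fne estimate also yields $\|x^{(k+1)}-x^{(k)}\|\le c\,t_k\le c\,\theta_{\bar\alpha,\bar\epsilon}^{(k)}(t_0)$ with $c:=\sqrt{\tfrac{\bar\alpha}{1-\bar\alpha}(1+\bar\epsilon)}$, so summability forces $(x^{(k)})$ to be Cauchy with a limit $\bar x\in\Fix T\cap\Lambda$ and $\dist(x^{(k)},\bar x)\le\sum_{j\ge k}\|x^{(j+1)}-x^{(j)}\|=O(s_k(t_0))$.

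I expect the main obstacle to be the bookkeeping that keeps every iterate inside $U\cap\Lambda$, since otherwise (a), (b) and the one-step estimate are not available at that step. I would handle this by a standard induction: the property $0<\theta_{\bar\alpha,\bar\epsilon}(t)<t$ on $(0,\tbar]$ from \eqref{eq:theta_tau_eps} makes $(t_k)$ nonincreasing once $t_0\le\tbar$; in case \eqref{t:msr convergence, necessary sublin}, Fej\'er monotonicity confines $(x^{(k)})$ to the closed ball of radius $t_0$ about $\bar x^{(0)}$, and in case \eqref{t:msr convergence, necessary lin+} the summable increment bounds confine $(x^{(k)})$ to a ball of radius $O(s_0(t_0))$ about $x^{(0)}$; in either case taking $x^{(0)}$ (equivalently $t_0$) small enough at the outset keeps the whole orbit inside $U$ and closes the induction. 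A minor additional point to record is that $\theta_{\bar\alpha,\bar\epsilon}$ must genuinely be a gauge on the interval in play, i.e. the quantity under the root in \eqref{eq:gauge} must stay nonnegative for admissible $\rho$; this is exactly what \eqref{eq:theta_tau_eps} stipulates, so no further work is required there.
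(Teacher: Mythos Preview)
Your proposal is correct and follows precisely the route the paper takes: the paper does not give a self-contained proof of this proposition but simply cites \cite[Theorem~7]{luke23} and notes that the deterministic, multi-valued specialization presents no difficulty. Your sketch is a faithful unpacking of that argument --- the a$\alpha$-fne inequality at a nearest fixed point combined with \eqref{e:msr} to get the one-step estimate $t_{k+1}\le\theta_{\bar\alpha,\bar\epsilon}(t_k)$, then the two cases of Assumption~\ref{a:msr convergence} for convergence and rates, with the inductive bookkeeping to keep the orbit in $U\cap\Lambda$ --- so nothing further is needed.
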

\begin{corollary}\label{t:msr convergence lin}
In the setting of Proposition \ref{t:metric.subreg.convergence} suppose that
for some fixed $\bar\alpha$ and for each $\bar\epsilon>0$
there is a neighborhood  $U_{\bar\epsilon}$ such that $T$ is pointwise
a$\alpha$-fne at all $y\in\Fix T\cap \Lambda$ with constant $\bar\alpha$ and
violation $\bar\epsilon$ on $U_{\bar\epsilon}$.   If in addition
\eqref{e:msr} holds with a linear gauge, i.e., $\rho(t)=\bar\kappa t$ for
some $\bar\kappa>0$, then for any $x^{(0)}\in \Lambda$ close enough to $\Fix T$,
the iterates $x^{(k+1)}\in Tx^{(k)}$ converge R-linearly to a point in $\Fix T$ and
\begin{equation}
    \dist(x^{(k+1)},\Fix T\cap \Lambda)\leq \bar c\, \dist(x^{(k)},\Fix T\cap \Lambda)
\end{equation}
where
$\bar c:=\sqrt{1+\bar\epsilon-\tfrac{1-\bar\alpha}{\bar\kappa^2\bar\alpha}}<1$.
\end{corollary}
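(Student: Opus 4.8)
The plan is to deduce the statement directly from Proposition~\ref{t:metric.subreg.convergence}, the only work being to check that a \emph{linear} metric-subregularity gauge forces the abstract gauge $\theta_{\bar\alpha,\bar\epsilon}$ of \eqref{eq:gauge} to be linear with slope $<1$, after a harmless adjustment of the constants. First I would fix the constants. Since \eqref{e:msr} holds with $\rho(t)=\bar\kappa t$ for some $\bar\kappa>0$, and since metric subregularity with constant $\bar\kappa$ is inherited by every larger constant (the estimate \eqref{e:msr} only becomes easier), I may assume without loss of generality that $\bar\kappa^2\geq \tfrac{1-\bar\alpha}{\bar\alpha}$. By hypothesis $T$ is pointwise a$\alpha$-fne with the fixed constant $\bar\alpha$ and \emph{arbitrarily small} violation $\bar\epsilon$ on a corresponding neighborhood $U_{\bar\epsilon}$; I would choose $\bar\epsilon\in\bigl(0,\tfrac{1-\bar\alpha}{\bar\kappa^2\bar\alpha}\bigr)$ (a nonempty interval), and shrink $U_{\bar\epsilon}$, if necessary, to a neighborhood of $\Fix T$. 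With these choices, substituting $\rho^{-1}(t)=t/\bar\kappa$ into the explicit formula for $\theta_{\bar\alpha,\bar\epsilon}$ in \eqref{eq:gauge} gives, exactly as in the linear-gauge computation preceding \eqref{eq:theta linear},
\[
\theta_{\bar\alpha,\bar\epsilon}(t)=\left((1+\bar\epsilon)t^2-\tfrac{1-\bar\alpha}{\bar\alpha}\,\tfrac{t^2}{\bar\kappa^2}\right)^{1/2}=\bar c\,t,\qquad \bar c=\sqrt{1+\bar\epsilon-\tfrac{1-\bar\alpha}{\bar\kappa^2\bar\alpha}}\in(0,1),
\]
where $\bar c>0$ because $\bar\kappa^2\geq\tfrac{1-\bar\alpha}{\bar\alpha}$ implies $\tfrac{1-\bar\alpha}{\bar\kappa^2\bar\alpha}\leq 1<1+\bar\epsilon$, and $\bar c<1$ by the choice of $\bar\epsilon$. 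In particular $\theta_{\bar\alpha,\bar\epsilon}$ is a valid gauge satisfying \eqref{eq:theta_tau_eps}.

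Next I would check the two hypotheses of Proposition~\ref{t:metric.subreg.convergence} on $U:=U_{\bar\epsilon}$: part~(a) is precisely the a$\alpha$-fne assumption with constant $\bar\alpha$ and violation $\bar\epsilon$ on $U_{\bar\epsilon}$, and part~(b) is the linear metric subregularity \eqref{e:msr}. I would then verify that the linear gauge $\theta_{\bar\alpha,\bar\epsilon}(t)=\bar c\,t$ satisfies Assumption~\ref{a:msr convergence}\eqref{t:msr convergence, necessary lin+}: since $0<\bar c<1$, the iterated compositions are $\theta_{\bar\alpha,\bar\epsilon}^{(j)}(t)=\bar c^{j}t$, so $\sum_{j\geq 1}\theta_{\bar\alpha,\bar\epsilon}^{(j)}(t)=\tfrac{\bar c}{1-\bar c}\,t<\infty$ for all $t$.

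Finally, invoking Proposition~\ref{t:metric.subreg.convergence} yields, for every $x^{(0)}\in\Lambda$ sufficiently close to $\Fix T$ and every sequence $x^{(k+1)}\in Tx^{(k)}$, the one-step estimate $\dist(x^{(k+1)},\Fix T\cap\Lambda)\leq\theta_{\bar\alpha,\bar\epsilon}(\dist(x^{(k)},\Fix T\cap\Lambda))=\bar c\,\dist(x^{(k)},\Fix T\cap\Lambda)$, together with convergence $x^{(k)}\to\bar x\in\Fix T\cap\Lambda$ at rate $O(s_k(t_0))$ with $s_k(t)=\sum_{j\geq k}\bar c^{j}t=\tfrac{\bar c^{k}}{1-\bar c}\,t$ and $t_0=\dist(x^{(0)},\Fix T\cap\Lambda)$; this is exactly R-linear convergence to a point in $\Fix T$, which is the assertion. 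The only genuinely delicate point --- the ``main obstacle'' --- is the bookkeeping of the first paragraph: one must make sure the simultaneous requirements ``$\theta_{\bar\alpha,\bar\epsilon}$ real and a valid gauge'' (a lower bound on $\bar\kappa^2$) and ``$\bar c<1$'' (an upper bound relating $\bar\epsilon$ and $\bar\kappa$) are compatible, which is why one first enlarges $\bar\kappa$ and only then picks $\bar\epsilon$ small; everything downstream is the direct application of Proposition~\ref{t:metric.subreg.convergence} and an elementary geometric series.
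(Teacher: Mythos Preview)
Your proof is correct and follows essentially the same route as the paper: you identify that the linear gauge $\rho(t)=\bar\kappa t$ yields the linear $\theta_{\bar\alpha,\bar\epsilon}$ of \eqref{eq:theta linear}, adjust the constants (enlarging $\bar\kappa$ and then shrinking $\bar\epsilon$) so that $0<\bar c<1$, and then invoke Proposition~\ref{t:metric.subreg.convergence} via Assumption~\ref{a:msr convergence}\eqref{t:msr convergence, necessary lin+}. The paper's proof is a one-line version of exactly this, noting that \eqref{eq:theta linear} holds ``with $\kappa\geq\bar\kappa$''; your careful bookkeeping of the compatibility of the two constraints on $\bar\kappa$ and $\bar\epsilon$ just makes explicit what the paper leaves implicit.
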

\begin{proof}
 Since $T$ is pointwise a$\alpha$-fne with arbitrarily small violation $\bar\epsilon$ on
 $U_{\bar\epsilon}$ and satisfies \eqref{e:msr} with some $\bar\kappa>0$, then Assumption
 \ref{a:msr convergence}\eqref{t:msr convergence, necessary lin+} holds with
 $\theta_{\alpha, \epsilon}$ satisfying \eqref{eq:theta linear} with $\kappa\geq\bar\kappa$.
\end{proof}

The point of Corollary \ref{t:msr convergence lin} is that
if \eqref{e:msr} holds with a linear gauge on a neighborhood $U$, then
it holds with the same gauge on all smaller neighborhoods.  Therefore, if
the neighborhood $U_{\bar\epsilon}$ can be constructed so that the
violation $\bar\epsilon$ is arbitrarily small on that neighborhood,
then on all neighborhoods small enough,
at least R-linear convergence of the fixed point iteration is guaranteed.
For applications involving collections of prox-regular (weakly convex), {\em subtransversal}
sets this yields generic
empirical certificates for convergence of the algorithm to a fixed point
\cite{luke2020convergence,DinJanLuk24}.

We are now able to state the main result of this section.
\begin{theorem}[cyclic relaxed Douglas-Rachford algorithm: local convergence]\label{theo:convergence.CDRl}
Let $\{A_1, A_2, \dots, A_m\}$ be a collection of subsets of $\mathbb{E}$.
For $\lambda\in [0,1]$ fixed, let $T$ denote the cyclic relaxed Douglas-Rachford mapping
defined in \eqref{eq:TCDRl}.
Suppose that the following conditions hold:
\begin{enumerate}[(a)]
    \item\label{theo:convergence.CDRl a} $\Fix T\neq\emptyset$;
     \item\label{theo:convergence.CDRl b} There is a neighborhood $U_1$ of each $y\in\Fix T$ such that $A_1\cap U_1\neq\emptyset$ and for all $j=1,2,\dots,m$ there are neighborhoods $U_j$ of points in the sets $A_j$ and sets $\Lambda_j:=P^{-1}_{A_j} (A_j\cap U_j)\cap U_j$ where Assumption \ref{a:1} holds;
\item\label{theo:convergence.CDRl c} For the gauge $\rho$
   \begin{equation}\label{e:msr drl}
    \dist(x,\Fix T\cap \Lambda_1)\leq \rho(\dist(x, Tx))\quad\forall x\in U_1\cap \Lambda_1.
   \end{equation}
\end{enumerate}
Then the following hold.
\begin{enumerate}[(i)]
\item\label{theo:convergence.CDRl ii} For any $x^{(k)}\in U_1\cap\Lambda_1$ close enough to $\Fix T\cap \Lambda_1$,
the iterates $x^{(k+1)}\in Tx^{(k)}$ satisfy
\begin{equation}%
\dist\paren{x^{(k+1)}, \Fix T\cap \Lambda_1}
\leq \theta_{\alpha, \epsilon}\paren{\dist\paren{x^{(k)}, \Fix T\cap \Lambda_1}}
\end{equation}%
where $\theta_{\alpha, \epsilon}$ is given by \eqref{eq:gauge}.

If in addition $\theta_{\alpha, \epsilon}$ satisfies one of the
conditions in Assumption \ref{a:msr convergence} with the parameter values
$\alpha$ and $\epsilon$ above, then for all $x^{(0)}$ close enough to $\Fix T$
in $\Lambda_1$,  $x^{(k)}\to \bar x\in \Fix T\cap \Lambda_1$
with rate
 $O\paren{\theta_{\tau, \epsilon}^{(k)}(t_0)}$
 in case
 Assumption \ref{a:msr convergence}\eqref{t:msr convergence, necessary sublin}
 ($t_0\equiv\dist(x^{(0)}, \Fix T\cap \Lambda_1)$),
 and with rate  $O(s_k(t_0))$ for
$s_k(t)\equiv
\sum_{j=k}^\infty \theta_{\tau, \epsilon}^{(j)}(t)$
in case Assumption \ref{a:msr convergence}\eqref{t:msr convergence, necessary lin+}.

\item\label{theo:convergence.CDRl iii} If for some fixed $\bar\alpha$ and for each $\bar\epsilon>0$
there is a neighborhood  $U_{1, \bar\epsilon}$ such that $T$ is pointwise
a$\alpha$-fne at all $y\in\Fix T\cap \Lambda_1$ with constant $\bar\alpha$ and
violation $\bar\epsilon$ on $U_{1,\bar\epsilon}$ and
\eqref{e:msr} holds with a linear gauge, i.e., $\rho(t)=\bar\kappa t$ for
some $\bar\kappa>0$, then for any $x^{(0)}\in \Lambda_1$ close enough to $\Fix T$,
the iterates $x^{(k+1)}\in Tx^{(k)}$ converge R-linearly to a point in $\Fix T$ and
\begin{equation}\label{eq:CDRl.rate}
    \dist(x^{(k+1)},\Fix T\cap \Lambda_1)\leq \bar c\, \dist(x^{(k)},\Fix T\cap \Lambda_1)
\end{equation}
where
$\bar c:=\sqrt{1+\bar\epsilon-\tfrac{1-\bar\alpha}{\bar\kappa^2\bar\alpha}}<1$.
\end{enumerate}
\end{theorem}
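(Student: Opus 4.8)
The strategy is to reduce Theorem~\ref{theo:convergence.CDRl} to the abstract machinery already assembled in this section, namely Proposition~\ref{t:metric.subreg.convergence} and Corollary~\ref{t:msr convergence lin}, applied with $\Lambda := \Lambda_1$. The only real work is verifying the two hypotheses of Proposition~\ref{t:metric.subreg.convergence}: (a) that $T$ is pointwise a$\alpha$-fne at every $y\in\Fix T\cap\Lambda_1$ on a small enough neighborhood with a usable constant and violation, and (b) the metric subregularity estimate \eqref{e:msr}. Hypothesis (b) is handed to us directly as assumption \eqref{theo:convergence.CDRl c}, so after unwinding definitions the crux is (a).

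First I would establish (a) by invoking Proposition~\ref{lem:Tfirmly.nonep}. Assumption~\eqref{theo:convergence.CDRl b} says Assumption~\ref{a:1}\eqref{a:1,a}--\eqref{a:1,d} holds on the neighborhoods $U_j$ and sets $\Lambda_j$, so Proposition~\ref{lem:Tfirmly.nonep} applies verbatim: the two-set operators $T_{j,j+1}$ are pointwise a$\alpha$-fne at each $y\in\Lambda_{j+1}$ on $U_{j+1}$ with constant $1/2$ and violation $\epsilon_{j,j+1}$, and the cyclic composite $T$ is pointwise a$\alpha$-fne at each $y\in\Lambda_1$ on $U_1$ with $\alpha = m/(m+1)$ and violation $\epsilon = \prod_{j=1}^m(1+\epsilon_{j,j+1})-1$. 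Since $\Fix T\subseteq\Lambda_1$ (a fixed point $y$ satisfies $y=P_{A_1}^{-1}$-membership through $y = T y$, and more simply $\Fix T\cap\Lambda_1$ is precisely the set on which the estimate is claimed), hypothesis (a) of Proposition~\ref{t:metric.subreg.convergence} is met with $\bar\alpha = m/(m+1)$ and $\bar\epsilon = \epsilon$. Here one should note that on any smaller neighborhood the a$\alpha$-fne property persists with the same or smaller constants, which is what lets the abstract proposition be applied "for all neighborhoods small enough."

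With both hypotheses of Proposition~\ref{t:metric.subreg.convergence} in hand, part~\eqref{theo:convergence.CDRl ii} follows immediately: the one-step gauge estimate $\dist(x^{(k+1)},\Fix T\cap\Lambda_1)\le\theta_{\alpha,\epsilon}(\dist(x^{(k)},\Fix T\cap\Lambda_1))$ is exactly the conclusion of that proposition with $\rho$ the gauge of \eqref{e:msr drl}, and the convergence rates under Assumption~\ref{a:msr convergence}\eqref{t:msr convergence, necessary sublin} or \eqref{t:msr convergence, necessary lin+} are likewise transcribed from it. For part~\eqref{theo:convergence.CDRl iii}, the hypothesis there upgrades (a) to the form required by Corollary~\ref{t:msr convergence lin}: a fixed $\bar\alpha$ with the violation $\bar\epsilon$ made arbitrarily small by shrinking the neighborhood $U_{1,\bar\epsilon}$. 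Combined with \eqref{e:msr} holding with the linear gauge $\rho(t)=\bar\kappa t$, Corollary~\ref{t:msr convergence lin} yields R-linear convergence to a point in $\Fix T$ with the stated rate $\bar c = \sqrt{1+\bar\epsilon - (1-\bar\alpha)/(\bar\kappa^2\bar\alpha)}<1$, which is precisely \eqref{eq:CDRl.rate}.

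The main obstacle — if there is one — is bookkeeping rather than mathematics: one must be careful that the nesting conditions $T_{j,j+1}U_{j+1}\subseteq U_j$ and $T_{j,j+1}\Lambda_{j+1}\subseteq\Lambda_j$ (Assumption~\ref{a:1}\eqref{a:1,c}--\eqref{a:1,d}) are genuinely what Proposition~\ref{lem:Tfirmly.nonep} and its underlying chain rule Lemma~\ref{t:av-comp av} consume, so that the composite estimate is valid at every $y\in\Lambda_1$ and, in particular, at every $y\in\Fix T\cap\Lambda_1$. The subtlety is that Proposition~\ref{t:metric.subreg.convergence} requires a$\alpha$-fne-ness "at all $y\in\Fix T\cap\Lambda$", whereas Proposition~\ref{lem:Tfirmly.nonep} delivers it "at all $y\in\Lambda_1$"; since $\Fix T\cap\Lambda_1\subseteq\Lambda_1$, this is an inclusion in the right direction and causes no trouble, but it should be stated explicitly. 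Everything else is a direct citation.
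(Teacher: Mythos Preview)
Your proposal is correct and follows essentially the same route as the paper's proof: assumptions \eqref{theo:convergence.CDRl a}--\eqref{theo:convergence.CDRl b} feed into Proposition~\ref{lem:Tfirmly.nonep} to obtain the pointwise a$\alpha$-fne property of $T$ with the constants in \eqref{eq:violation.composite}, assumption \eqref{theo:convergence.CDRl c} is exactly hypothesis (b) of Proposition~\ref{t:metric.subreg.convergence}, and then Proposition~\ref{t:metric.subreg.convergence} and Corollary~\ref{t:msr convergence lin} yield parts \eqref{theo:convergence.CDRl ii} and \eqref{theo:convergence.CDRl iii} respectively. Your discussion of the bookkeeping (the inclusion $\Fix T\cap\Lambda_1\subseteq\Lambda_1$ and the nesting conditions from Assumption~\ref{a:1}) is more explicit than the paper's own proof, which simply cites the relevant results without unpacking these details.
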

\if{ (Figure \ref{fig:main.theo} plots sets $A_j,U_j,\Lambda_j$ and points $x^{(j)}$ in Theorem \ref{theo:convergence.CDRl}.)
\definecolor{xdxdff}{rgb}{0.49019607843137253,0.49019607843137253,1}
\definecolor{ududff}{rgb}{0.30196078431372547,0.30196078431372547,1}

\begin{figure}
    \centering
\definecolor{xdxdff}{rgb}{0.49019607843137253,0.49019607843137253,1}
\begin{tikzpicture}[line cap=round,line join=round,>=triangle 45,x=1cm,y=1cm]
\clip(-2.64,-4.21) rectangle (7.32,3.35);
\draw [line width=1pt] (-3.58,0.59) circle (4.976183276367542cm);
\draw [line width=1pt] (8.34,3.99) circle (5.254217353707402cm);
\draw [line width=1pt] (4.64,-4.79) circle (3.4733269353747858cm);
\draw (0.3,2.33) node[anchor=north west] {$A_1$};
\draw (5.72,0.11) node[anchor=north west] {$A_2$};
\draw (5.64,-1.7) node[anchor=north west] {$A_3$};
\draw [line width=1pt] (1.2281621474857234,-0.6921765726628596) circle (1.7cm);
\draw [line width=1pt] (3.901136952479928,1.178720069903954) circle (2cm);
\draw [line width=1pt] (3.350038760272231,-1.56509690068058) circle (1.5cm);
\draw (1.06,-0.7) node[anchor=north west] {$x^{(1)}$};
\draw (3,-1.5) node[anchor=north west] {$x^{(3)}$};
\draw (3.3,1) node[anchor=north west] {$x^{(2)}$};
\draw (-0.4,0.1) node[anchor=north west] {$\Lambda_1=U_1$};
\draw (4,2) node[anchor=north west] {$\Lambda_2=U_2$};
\draw (2.7,-2) node[anchor=north west] {$\Lambda_3=U_3$};
\begin{scriptsize}
\draw [fill=xdxdff] (1.2281621474857234,-0.6921765726628596) circle (2.5pt);
\draw [fill=xdxdff] (3.901136952479928,1.178720069903954) circle (2.5pt);
\draw [fill=xdxdff] (3.350038760272231,-1.56509690068058) circle (2.5pt);
\end{scriptsize}
\end{tikzpicture}
    \caption{Illustration of Theorem \ref{theo:convergence.CDRl} with $E=\R^2$, $m=3$,
    $A_j$s being circles, and $U_j$s being open balls centered at $x^{(j)}$s.
    }
    \label{fig:main.theo}
\end{figure}
}\fi
\begin{proof}
Assumption  (a) of Proposition \ref{t:metric.subreg.convergence} follows from
Assumptions \eqref{theo:convergence.CDRl a}-\eqref{theo:convergence.CDRl b}  and Proposition
\ref{lem:Tfirmly.nonep} with constants $\alpha$ and $\epsilon$ given by \eqref{eq:violation.composite}.
Assumption \eqref{theo:convergence.CDRl c} is assumption (b) of Proposition \ref{t:metric.subreg.convergence}.
The result then follows from Proposition \ref{t:metric.subreg.convergence} and Corollary
\ref{t:msr convergence lin}.
\end{proof}

\begin{remark}
Condition (c) in Theorem \ref{theo:convergence.CDRl} is difficult to check.
It was shown in \cite[Theorem 2]{LukTebTha18} that
linear metric subregularity is necesssary for local linear convergence.
In the applications to which we have applied the algorithm \cite{LukSabTeb19,DinJanLuk24} we have
not observed numerical behavior that is inconsistent with the hypothesis that
condition (c) in Theorem \ref{theo:convergence.CDRl} holds with a {\em linear} gauge.
For these applications the analysis of \cite{luke2020convergence}, which applies only
to relaxed Douglas-Rachford mappings, could also be adapted to provide generic
guarantees that, in particular for the application in \cite{DinJanLuk24},
condition (c) in Theorem \ref{theo:convergence.CDRl} is satisfied with a {\em linear} gauge.
\end{remark}

\begin{remark}[the convex case]
If the sets $A_1, A_2, \dots, A_m$ are all closed and convex, then global convergence statements are readily obtained.
In this case, Assumption \ref{a:1} is satisfied with $\Lambda_j=U_j=\Ebb$ and violations $\epsilon_{U_j} = 0$
for all $j=1,2,\dots,m$.  From Proposition \ref{lem:Tfirmly.nonep} it follows that the cyclic relaxed Douglas-Rachford
mapping \eqref{eq:TCDRl} is $\alpha$-fne with constant $\alpha=m/(m+1)$.  It follows then from classical arguments
that whenever there
exist fixed points, the sequence generated by the cyclic relaxed Douglas-Rachford
mapping \eqref{eq:TCDRl} converges to a fixed point, and this is independent of whether or not the
corresponding feasibility problem is consistent.  This improves the results of \cite{BorTam14,luke2018relaxed} which
require consistency of the feasibility problem.
\end{remark}

The next corollary analyzes the convergence of the sum of the distance between the sets $A_i$ at
any given iterate of the cyclic relaxed Douglas-Rachford algorithm:

\begin{corollary}\label{coro:conergence.gap}
In the setting of Theorem \ref{theo:convergence.CDRl}, assume in addition that\\
$P_{A_{i+1}}\Lambda_{i+1}\subseteq \Lambda_{i}$, and $P_{A_{i+1}}U_{i+1}\subseteq U_{i}$ for $i=m,\dots,1$.
Define
   \begin{equation}\label{eq:sum.gap}
    \mbox{gap}^{(k)}:=\sum_{i=1}^{m} \mbox{gap}^{((i,i+1),k)}, \ \forall k=1 ,2,\dots
\end{equation}
where $\mbox{gap}^{((i,i+1),k)}:= \|y^{(i+1,k)}-y^{(i,k)}\|$, for $i=m,\dots,1$.
Here $y^{(m+1,k)}\in P_{A_{m+1}} y^{(k)}$ and $y^{(i,k)}\in P_{A_i} y^{(i+1,k)}$, for $i=m,\dots,1$.
Then the following statements hold:
\begin{enumerate} 
    \item  
    The sequence $(\mbox{gap}^{(k)})_{k=0}^\infty$ converges to a constant $g\ge 0$.
    \item If $g=0$, problem \eqref{eq:multiset.model} is consistent.
\end{enumerate}
\end{corollary}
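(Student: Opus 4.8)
The plan is to follow the cycle of projections $y^{(k)}\mapsto y^{(m+1,k)}\mapsto y^{(m,k)}\mapsto\cdots\mapsto y^{(1,k)}$ that defines $\mbox{gap}^{(k)}$, tracking at each stage the current point together with its limit, and to pass to the limit using the Lipschitz-type estimate of Lemma \ref{lem:nonexp.refector.projector}(i). By Theorem \ref{theo:convergence.CDRl} the iterates converge, $y^{(k)}\to\bar y\in\Fix T\cap\Lambda_1$; since $\Lambda_1\subseteq U_1$ with $U_1$ open, $y^{(k)}\in U_1$ for all $k$ large. I use the wrap-around convention $m+1\equiv 1$ (so $A_{m+1}=A_1$, $U_{m+1}=U_1$, $\Lambda_{m+1}=\Lambda_1$) as in Assumption \ref{a:1}. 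From Assumption \ref{a:1}\eqref{a:1,b} and Lemma \ref{lem:nonexp.refector.projector}(i), applied exactly as in the proof of Proposition \ref{lem:Tfirmly.nonep}, for each $j$ there is a finite constant $c_j$ with $\|u^+-v^+\|\le c_j\|u-v\|$ whenever $v\in\Lambda_j$, $u\in U_j$, $v^+\in P_{A_j}v$ and $u^+\in P_{A_j}u$; in particular $P_{A_j}$ is single-valued on $\Lambda_j$.

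For part~(1) I would proceed by a finite induction down the cycle. Set $y^{(m+1,\infty)}:=P_{A_1}\bar y$, a single point since $\bar y\in\Lambda_1$. The extra hypotheses of the corollary in the case $i=m$ give $P_{A_1}\Lambda_1\subseteq\Lambda_m$ and $P_{A_1}U_1\subseteq U_m$, so $y^{(m+1,\infty)}\in\Lambda_m$ and $y^{(m+1,k)}\in U_m$ for $k$ large, while $\|y^{(m+1,k)}-y^{(m+1,\infty)}\|\le c_1\|y^{(k)}-\bar y\|\to 0$ for every selection $y^{(m+1,k)}\in P_{A_1}y^{(k)}$ (the estimate applies with reference point $\bar y\in\Lambda_1$ and moving point $y^{(k)}\in U_1$). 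Inductively, suppose for some $i\in\{m,\dots,1\}$ that $y^{(i+1,k)}\in U_i$, $y^{(i+1,\infty)}\in\Lambda_i$, and $y^{(i+1,k)}\to y^{(i+1,\infty)}$; put $y^{(i,\infty)}:=P_{A_i}y^{(i+1,\infty)}$, single-valued since $y^{(i+1,\infty)}\in\Lambda_i$. Then for every selection $y^{(i,k)}\in P_{A_i}y^{(i+1,k)}$ one has $\|y^{(i,k)}-y^{(i,\infty)}\|\le c_i\|y^{(i+1,k)}-y^{(i+1,\infty)}\|\to 0$, and for $i\ge 2$ the extra hypotheses (case $i-1$) further give $y^{(i,k)}\in P_{A_i}U_i\subseteq U_{i-1}$ and $y^{(i,\infty)}\in P_{A_i}\Lambda_i\subseteq\Lambda_{i-1}$, feeding the next step. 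Hence $y^{(i,k)}\to y^{(i,\infty)}$ for every $i$, so $\mbox{gap}^{((i,i+1),k)}=\|y^{(i+1,k)}-y^{(i,k)}\|\to\|y^{(i+1,\infty)}-y^{(i,\infty)}\|=:g_i\ge 0$ and, summing, $\mbox{gap}^{(k)}\to g:=\sum_{i=1}^m g_i\ge 0$.

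For part~(2), if $g=0$ then each $g_i=0$, i.e., $y^{(i,\infty)}=y^{(i+1,\infty)}$ for $i=1,\dots,m$, so $y^{(1,\infty)}=\cdots=y^{(m+1,\infty)}=:z$. For each $i=1,\dots,m$ the point $y^{(i,\infty)}=\lim_k y^{(i,k)}$ is a limit of points $y^{(i,k)}\in P_{A_i}y^{(i+1,k)}\subseteq A_i$, and $A_i$ is closed by Assumption \ref{a:1}\eqref{a:1,a}, whence $z\in A_i$. Therefore $z\in\bigcap_{i=1}^m A_i\ne\emptyset$, i.e., problem \eqref{eq:multiset.model} is consistent.

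The main obstacle I anticipate is purely the neighbourhood/index bookkeeping in the induction of the second paragraph: at each of the $m$ steps one must verify that the moving point $y^{(i+1,k)}$ has not escaped $U_i$ and that the limit reference point $y^{(i+1,\infty)}$ has not escaped $\Lambda_i$, and these are exactly the two extra inclusions of the corollary, provided the wrap-around index $m+1$ is handled consistently. The one genuinely delicate point is that the intermediate projections $P_{A_i}y^{(i+1,k)}$ need not be single-valued; this causes no trouble because Lemma \ref{lem:nonexp.refector.projector}(i) bounds \emph{every} element of $P_{A_i}y^{(i+1,k)}$ by the distance to $y^{(i+1,\infty)}\in\Lambda_i$, so all selections converge to the single point $P_{A_i}y^{(i+1,\infty)}$ and $g$ is well defined regardless of the selections. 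Everything else is an appeal to Theorem \ref{theo:convergence.CDRl} or elementary.
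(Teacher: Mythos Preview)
Your proposal is correct and follows essentially the same route as the paper's proof: both use the convergence $y^{(k)}\to y^{*}$ from Theorem~\ref{theo:convergence.CDRl}, then chain the pointwise almost-nonexpansiveness estimate of Lemma~\ref{lem:nonexp.refector.projector}(i) through the successive projections, using the extra inclusions $P_{A_{i+1}}\Lambda_{i+1}\subseteq\Lambda_i$ and $P_{A_{i+1}}U_{i+1}\subseteq U_i$ to keep the points in the right neighbourhoods at each step, and finish part~(2) by noting $y^{(i,\infty)}\in A_i$. The only cosmetic difference is that the paper places the iterates in $\Lambda_{\bullet}$ and the limit points in $U_{\bullet}$ when applying the estimate, whereas you (more naturally) do the reverse; both assignments are valid.
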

Figure \ref{fig:sum.gap} plots points $y^{(i,k)}$ and $\mbox{gap}^{((i,i+1),k)}$ in Corollary \ref{coro:conergence.gap}.
\begin{figure}
    \centering
   \definecolor{uuuuuu}{rgb}{0.26666666666666666,0.26666666666666666,0.26666666666666666}
\definecolor{ffqqqq}{rgb}{1,0,0}
\definecolor{ududff}{rgb}{0.30196078431372547,0.30196078431372547,1}
\begin{tikzpicture}[line cap=round,line join=round,>=triangle 45,x=1cm,y=1cm,scale=1.5]
\clip(-3.84,-0.330370370370369) rectangle (1.5288888888888883,3.0296296296296275);
\draw [line width=1pt,color=ffqqqq] (-4.62,2.41) circle (1.8539687160251654cm);
\draw [line width=1pt,color=ffqqqq] (2.06,2.15) circle (2.0932271735289505cm);
\draw [line width=1pt,color=ffqqqq] (-2.14,-0.83) circle (1.6141871019184857cm);
\draw [line width=1pt,dotted] (-1.58,1.55)-- (-2.8360420792455647,1.905327693470785);
\draw [line width=1pt,dash pattern=on 1pt off 1pt] (-2.8360420792455647,1.905327693470785)-- (-0.03061829890924768,2.045524508779318);
\draw [line width=1pt,dash pattern=on 1pt off 1pt] (-0.03061829890924768,2.045524508779318)-- (-1.1852330509832059,0.4715452635483573);
\draw [line width=1pt,dash pattern=on 1pt off 1pt] (-3.0054130136537665,1.4987871469731158)-- (-1.1852330509832059,0.4715452635483573);
\draw (-1.6533333333333329,1.6251851851851844) node[anchor=north west] {$y^{(k)}$};
\draw (-3.9007407407407396,2.8274074074074055) node[anchor=north west] {$A_1=A_4$};
\draw (-3.6,2.1288888888888877) node[anchor=north west] {$y^{(4,k)}$};
\draw (0,2.4311111111111097) node[anchor=north west] {$y^{(3,k)}$};
\draw (-1.6925925925925922,0.4407407407407412) node[anchor=north west] {$y^{(2,k)}$};
\draw (-3.8303703703703693,1.654814814814814) node[anchor=north west] {$y^{(1,k)}$};
\draw (0.45740740740740735,1.0074074074074) node[anchor=north west] {$A_3$};
\draw (-3.3229629629629622,0.4896296296296298) node[anchor=north west] {$A_2$};
\draw (-1.5703703703703698,2.512592592592591) node[anchor=north west] {$gap^{(3,4)}$};
\draw (-0.7170370370370368,1.239259259259259) node[anchor=north west] {$gap^{(2,3)}$};
\draw (-3.17037037037036,1.175555555555555) node[anchor=north west] {$gap^{(1,2)}$};
\begin{scriptsize}
\draw [fill=ududff] (-4.62,2.41) circle (1.5pt);
\draw [fill=ududff] (2.06,2.15) circle (1.5pt);
\draw [fill=ududff] (3.56,0.69) circle (1.5pt);
\draw [fill=ududff] (-2.14,-0.83) circle (1.5pt);
\draw [fill=ududff] (-1.24,-2.17) circle (1.5pt);
\draw [fill=ududff] (-7,2.49) circle (1.5pt);
\draw [fill=ududff] (-1.58,1.55) circle (1.5pt);
\draw [fill=uuuuuu] (-2.8360420792455647,1.905327693470785) circle (1pt);
\draw [fill=uuuuuu] (-0.03061829890924768,2.045524508779318) circle (1pt);
\draw [fill=uuuuuu] (-1.1852330509832059,0.4715452635483573) circle (1pt);
\draw [fill=uuuuuu] (-3.0054130136537665,1.4987871469731158) circle (1pt);
\draw [fill=ududff] (-4.573333333333333,1.4066666666666663) circle (1.5pt);
\end{scriptsize}
\end{tikzpicture}
    \caption{The sum of the gaps between the sets $A_i$. Here $\Ebb=\R^2$, $A_i$ are circles.}
    \label{fig:sum.gap}
\end{figure}
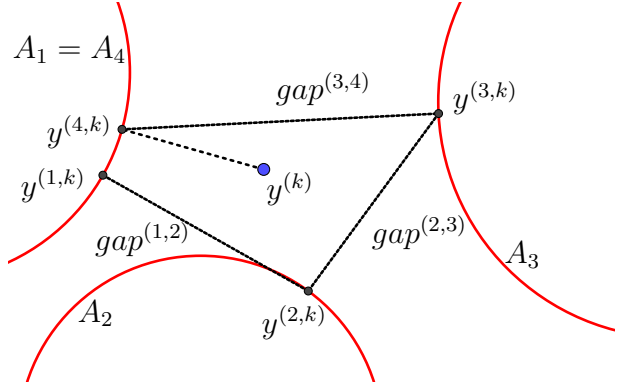
\begin{proof}
Let us prove the first statement.
Let $y^*\in\Fix T\cap\Lambda_{m+1}$ be limit point of $(y^{(k)})_{k=1}^\infty$.
Let $y^{(m+1,*)}\in P_{A_{m+1}} y^*$ and $y^{(i,*)}\in P_{A_i} y^{(i+1,*)}$, for $i=m,\dots,1$.
We claim that the sequence of shadow $(y^{(i,k)})_{k=0}^\infty$ converges to $y^{(i,*)}$ for $i=1,\dots,m+1$.
Indeed, by Assumption \ref{a:1},
Lemma \ref{lem:nonexp.refector.projector} implies that $P_{A_i}$ is pointwise almost
nonexpansive with violation $\hat \epsilon_i= 4\epsilon_i/(1-\epsilon_i)^2$
at all points
$u\in \Lambda_i$ on $U_i$, i.e., with $u'\in P_{A_i}u$,
\begin{equation}\label{eq:nonexp.RAi}
    \|v'-u'\|\leq \sqrt{1+\hat\epsilon_i}\|v-u\|, \ \forall v\in U_i\,,\, v'\in P_{A_i}v.
\end{equation}
Since $y^{(k)}\in\Lambda_{m+1}$, by assumption, we have $y^{(m+1,k)}\in P_{A_{m+1}}y^{(k)}\subseteq P_{A_{m+1}}\Lambda_{m+1}\subseteq\Lambda_m$. This implies 
\begin{equation}
\begin{array}{rl}
     y^{(m,k)}&\in P_{A_m}y^{(m+1,k)} \subseteq P_{A_m}\Lambda_{m}\subseteq  \Lambda_{m-1}\, ,\\
     y^{(m-1,k)}&\in  P_{A_{m-1}}y^{(m,k)} \subseteq P_{A_{m-1}}\Lambda_{m-1}\subseteq \Lambda_{m-2}\, ,\\
     \dots \\
     y^{(2,k)}&\in  P_{A_2}y^{(3,k)} \subseteq P_{A_2}\Lambda_2\subseteq \Lambda_{1}.
\end{array}
\end{equation}
Similarly, since $y^*\in\Lambda_{m+1}\subseteq U_{m+1}$, by assumption, we have $y^{(m+1,*)}\in P_{A_{m+1}}y^*\subseteq P_{A_{m+1}}U_{m+1}\subseteq U_m$. This implies
\begin{equation}
\begin{array}{rl}
     y^{(m,*)}&\in P_{A_m}y^{(m+1,*)} \subseteq P_{A_m}U_{m}\subseteq  U_{m-1}, \\
     y^{(m-1,*)}&\in  P_{A_{m-1}}y^{(m,*)} \subseteq P_{A_{m-1}}U_{m-1}\subseteq U_{m-2},\\
     \dots \\
     y^{(2,*)}&\in  P_{A_2}y^{(3,*)} \subseteq P_{A_2}U_2\subseteq U_{1}.
\end{array}
\end{equation}
By applying \eqref{eq:nonexp.RAi} for each $i=1,\dots,m+1$, we obtain
\begin{equation}
    \begin{array}{rl}
        \|y^{(1,k)}-y^{(1,*)}\| \le &  \sqrt{1+\hat\epsilon_{1}} \|y^{(2,k)}-y^{(2,*)}\|\, \\
          \le &\sqrt{(1+\hat\epsilon_{1})(1+\hat\epsilon_{2})} \|y^{(3,k)}-y^{(3,*)}\| \\
          &\dots\\
          \le&\sqrt{(1+\hat\epsilon_1)\dots(1+\hat\epsilon_{m})}\|y^{(m+1,k)}-y^{(m+1,*)}\| \\
          \le&\sqrt{(1+\hat\epsilon_1)\dots(1+\hat\epsilon_{m})(1+\hat\epsilon_{m+1})}\|y^{(k)}-y^{*}\|
    \end{array}
\end{equation}
Since $\|y^{(k)}-y^*\|\to 0$ as $k\to\infty$, the sequence $(y^{(i,k)})_{k=1}^\infty$ converges to $y^{(i,*)}$ for all $i=1,\dots,m+1$.
It implies that $(\mbox{gap}^{((i,i+1),k)})_{k=0}^\infty$ converges to  $\|y^{(i+1,*)}-y^{(i,*)}\|$ as $k\to\infty$, where $\mbox{gap}^{((i,i+1),k)}=\|y^{(i+1,k)}-y^{(i,k)}\|$ for all $i=m,\dots,1$.
Indeed, by the triangle inequality, we have
\begin{equation*}
    \begin{array}{rl}
 &\left|\mbox{gap}^{((i,i+1),k)}-\|y^{(i+1,*)}-y^{(i,*)}\|\right|\\[7pt]
 &= \left|\|y^{(i+1,k)}-y^{(i,k)}\|-\|y^{(i+1,*)}-y^{(i,*)}\|\right|\\[5pt]
 &=\left|\left\|\left[\left(y^{(i+1,k)}-y^{(i+1,*)}\right)-\left(y^{(i,k)}-y^{(i,*)}\right)\right] +\left(y^{(i+1,*)}-y^{(i,*)}\right)\right\|\right.\\
 &\qquad\qquad\qquad\qquad\qquad\qquad\qquad\qquad\qquad\quad \left.-\left\|y^{(i+1,*)}-y^{(i,*)}\right\|\right|\\
 &\le  \|(y^{(i+1,k)}-y^{(i+1,*)})-(y^{(i,k)}-y^{(i,*)})\|\\
 &\le  \|y^{(i+1,k)}-y^{(i+1,*)}\| +\|y^{(i,k)}-y^{(i,*)}\|.
    \end{array}
\end{equation*}
The right hand side converges to $0$ as  $k\to\infty$ so together with \eqref{eq:sum.gap},
the first statement follows with
\begin{equation}
    g=\sum_{i=1}^{m} \|y^{(i+1,*)}-y^{(i,*)}\|\ge 0\,.
\end{equation}
If $g=0$, then $y^{(1,*)}=\dots=y^{(m+1,*)}$.
Since $y^{(i,*)}\in P_{A_i} y^{(i+1,*)} \subseteq A_i$, we obtain $y^{(1,*)}=\dots=y^{(m+1,*)}\in  \bigcap_{i=1}^m A_i$.
Here $y^{(m+1,*)}$ is arbitrary in $P_{A_{m+1}}y^*$, which yields the second statement.
\end{proof}
\begin{remark}\label{rm:use.of.gap}
In practice, the limit $g$ of the $\mbox{gap}^{(k)}$ in equation \eqref{eq:sum.gap} provides valuable information regarding the proximity of the shadows of the iterate $y^{(k)}$ between the sets $A_i$.
If the problem \eqref{eq:multiset.model} is inconsistent, meaning that $\bigcap_{i=1}^m A_i=\emptyset$, then $g> 0$. 
In such cases, a smaller value of $g$ indicates a better solution in terms of finding the nearest points among the sets.
Thus $\mbox{gap}^{(k)}$ allows us to assess the quality of the approximate solution $y^{(k)}$.
\end{remark}

%
\bibliographystyle{abbrv}

\end{document}